\journalname{Research in Mathematical Sciences}
\newcommand{\tabincell}[2]{\begin{tabular}{@{}#1@{}}#2\end{tabular}}
\newcommand{\bx}{{\bf x} }
\newcommand{\beU}{{ U} }
\newcommand{\pt}{\partial}
\newcommand{\nn}{\nonumber}
\newcommand{\eps}{\varepsilon}
\renewcommand\theequation{{\thesection}%
	.{\arabic{equation}}}
\newcommand{\be}{\begin{equation}}
\newcommand{\ee}{\end{equation}}
\newcommand{\ba}{\begin{array}}
    \newcommand{\ea}{\end{array}}
\newcommand{\bea}{\begin{eqnarray}}
\newcommand{\eea}{\end{eqnarray}}
\newcommand{\beas}{\begin{eqnarray*}}
	\newcommand{\eeas}{\end{eqnarray*}}
\definecolor{color1}{RGB}{249,255,195}
\definecolor{color2}{RGB}{0,176,80}
\tikzset{%
	>={Latex[width=2mm,length=2mm]},
	base/.style = {draw=black,
		minimum width=2cm, minimum height=1cm,
		text centered, fill=color1},
	Dirac/.style = {ellipse, base},
	others/.style = {base, rectangle, rounded corners},
}
\begin{document}
		
\title{A fourth-order compact time-splitting Fourier pseudospectral method for the Dirac equation\thanks{This work was partially supported by the Ministry of Education of Singapore grant
R-146-000-223-112 (MOE2015-T2-2-146).}}

\author{Weizhu Bao         \and
	Jia Yin 
}

\institute{Weizhu Bao \at
	Department of Mathematics,  National University of
	Singapore, Singapore 119076, Singapore\\
	\email{matbaowz@nus.edu.sg}          \\
	URL: http://www.math.nus.edu.sg/\~{}bao/
	\and
	Jia Yin (Corresponding author)\at
	NUS Graduate School for Integrative Sciences and Engineering (NGS), National University of
	Singapore, Singapore 117456, Singapore\\
	\email{e0005518@u.nus.edu}
}
	
	\date{Received: date / Accepted: date}
	
	\maketitle

		
\begin{abstract}
We propose a new
fourth-order compact time-splitting ($S_\text{4c}$) Fourier pseudospectral method for the Dirac equation by splitting the Dirac equation into two parts together with using the double commutator between them to integrate the Dirac equation at each time interval.
The method is explicit, fourth-order in time and spectral order in space.
It is unconditional stable and conserves the total probability in the discretized level.
It is called a compact time-splitting method since, at each time step, the number of sub-steps in $S_\text{4c}$ is much less than
those of the standard fourth-order splitting method and the fourth-order partitioned Runge-Kutta splitting method. Another advantage of
$S_\text{4c}$ is that it avoids to use negative time steps in integrating sub-problems at each time interval. Comparison between $S_\text{4c}$
and many other existing time-splitting methods for the Dirac equation are carried out in terms of accuracy and efficiency as well as long time behavior. Numerical results demonstrate the advantage in terms of
efficiency and accuracy of the proposed
$S_\text{4c}$. Finally we report the spatial/temporal resolutions of $S_\text{4c}$ for the Dirac
equation in different parameter regimes including the nonrelativistic limit regime, the semiclassical limit regime,
and the simultaneously nonrelativisic and massless limit regime.

\keywords{Dirac equation \and fourth-order compact time-splitting \and double commutator \and probability conservation \and nonrelativistic limit regime \and semiclassical limit regime}
\end{abstract}

	\section{Introduction}\setcounter{equation}{0}
	
	The Dirac equation was proposed by British physicist Paul Dirac in 1928 in order to integrate special
	relativity with quantum mechanics \cite{Dirac}. It
	successfully solved the problem that the probability density could be negative in the Klein-Gordon equation proposed by Oskar Klein and Walter Gordon in 1926 \cite{Davydov}. The Dirac equation
describes the motion of relativistic spin-1/2 massive particles,
	such as electrons and quarks. It fully explained the hydrogen
spectrum and predicted the existence of antimatter.
Recently, the Dirac equation has been extensively adopted to investigate theoretically the structures and/or dynamical
properties of graphene and graphite as well as other two-dimensional (2D) materials \cite{AMP,FW,NGMJ,NGP},
and to study the relativistic effects in molecules in super intense
lasers, e.g., attosecond lasers \cite{BCLL,FLB}.

Consider the Dirac equation with electromagnetic potentials in three spatial dimensions (3D) \cite{Dirac,Dirac2,Dirac3,Thaller}
	\begin{equation}
	\label{LDirac}
	i\hbar\partial_t\Psi =  \left(- ic\hbar\sum_{j = 1}^{3}
	\alpha_j\partial_j +mc^2\beta \right)\Psi+ e\left(V(\mathbf{x})I_4 - \sum_{j = 1}^{3}A_j(\mathbf{x})\alpha_j\right)\Psi,
	\quad \mathbf{x}\in{\mathbb{R}^3},
	\end{equation}
where $t$ is time, $\mathbf{x} = (x_1, x_2, x_3)^T$ (or $\mathbf{x} = (x, y, z)^T$) is the spatial coordinate, $\Psi:=\Psi(t, \mathbf{x}) =  (\psi_1(t, \mathbf{x})$, $\psi_2(t, \mathbf{x}),
	\psi_3(t, \mathbf{x}),\psi_4(t, \mathbf{x}))^T \in \mathbb{C}^4$ is the complex-valued spinor wave function, and $\partial_j$ represents $\partial_{x_j}$ for $j=1,2,3$. The constants used in the equation are:
$i = \sqrt{-1}$, $\hbar$ is the Planck constant, $m$ is the mass, $c$ is the speed of light and $e$ is the unit charge. In addition,
$V:=V(\mathbf{x})$ is the time-independent
	electric potential and $\mathbf{A}:=\mathbf{A}(\mathbf{x}) = (A_1(\mathbf{x}), A_2(\mathbf{x}), A_3(\mathbf{x}))^T$ stands for the
time-independent magnetic potential, which are all real-valued given functions. Finally, the $4\times 4$ matrices $\beta$ and $\alpha_j$ ($j=1,2,3)$ are the Dirac representation matrices of the four-dimensional
Clifford algebra, which are  given as
	\begin{equation}
	\label{matrices}
	\beta = \begin{pmatrix} I_2 & \mathbf{0}\\ \mathbf{0} & -I_2\end{pmatrix}, \quad
	\alpha_j = \begin{pmatrix} \mathbf{0} & \sigma_j\\ \sigma_j & \mathbf{0}\end{pmatrix},
	\quad j = 1, 2, 3,
	\end{equation}
	where $I_n$ is the $n\times n$ identity matrix and $\sigma_j$ ($j=1,2,3$) are the Pauli matrices defined as:
	\begin{equation}
	\label{Pauli}
	\sigma_1 = \begin{pmatrix} 0 & 1\\ 1 & 0 \end{pmatrix}, \quad
	\sigma_2 = \begin{pmatrix} 0 & -i \\ i & 0\end{pmatrix}, \quad
	\sigma_3 = \begin{pmatrix} 1 & 0 \\ 0 & -1 \end{pmatrix}.
	\end{equation}
	
In order to nondimensionalize the Dirac equation (\ref{LDirac}), we take
	\begin{equation}
	\label{nondim1}
	\tilde{\mathbf{x}} = \dfrac{\mathbf{x}}{x_s}, \quad \tilde{t} = \dfrac{t}{t_s}, \quad
	\tilde{V} = \dfrac{V}{A_s}, \quad \tilde{\mathbf{A}} = \dfrac{\mathbf{A}}{A_s}, \quad
	\tilde{\Psi}(\tilde{t},\tilde{\mathbf{x}} ) = \dfrac{\Psi(t, \mathbf{x})}{\psi_s},
	\end{equation}
where $x_s$, $t_s$ and $m_s$ are
length unit, time unit and mass unit,
respectively, to be taken for the nondimensionalization of the Dirac equation \eqref{LDirac}. Plugging \eqref{nondim1} into
(\ref{LDirac}) and taking $\psi_s = x_s^{-3/2}$ and
$A_s = \frac{m_sx_s^2}{et_s^2}$, after some simplification and then removing all $\tilde{}$, we obtain the dimensionless Dirac equation in 3D
	\begin{equation}
	\label{nondimr2}
	i\delta\partial_t\Psi =  \left(- i\dfrac{\delta}{\eps}\sum_{j = 1}^{3}\alpha_j\partial_j +
	\dfrac{\nu}{\varepsilon^2}\beta\right)\Psi
	+ \left(V(\mathbf{x})I_4 - \sum_{j = 1}^{3}A_j(\mathbf{x})\alpha_j\right)\Psi, \quad \mathbf{x}\in\mathbb{R}^3,
	\end{equation}
where the three dimensionless parameters $0<\eps,\delta,\nu\le1$ are given as
	\begin{equation}
	\label{nondimconst}
	\eps = \dfrac{x_s}{t_sc}=\dfrac{v_s}{c}, \quad \delta = \frac{\hbar t_s}{m_sx_s^2}, \quad \nu = \dfrac{m}{m_s},
	\end{equation}
with $v_s=x_s/t_s$ the velocity unit for nondimensionalization. In fact,
here $\eps$ represents the ratio between the
wave velocity and the speed of light, i.e. it is inversely proportional to the speed of light,  $\delta$ stands for the scaled Planck constant  and $\nu$ is the ratio between the mass of the particle and the mass unit taken
for the nondimensionalization.

  As discussed in \cite{BCJT2}, under proper assumption on the
electromagnetic potentials $V(\bx)$ and $\mathbf{A}(\bx)$,
the Dirac equation \eqref{nondimr2} in 3D could be reduced
to two dimensions (2D) and one dimension (1D). Specifically,
the Dirac equation in 2D has been widely applied to model the electron structure and dynamical properties of graphene and other 2D materials
as they share the same dispersion relation on certain points
called Dirac points \cite{FW,FW2,FGNMPC,NGP}. In fact,
the Dirac equation \eqref{nondimr2} in 3D and its dimension reduction
in 2D and 1D can be formulated in a unified way
in $d$-dimensions ($d = 1, 2, 3$) as
\begin{equation}
\label{nondimuni}
	i\delta\partial_t\Psi =  \left(- i\dfrac{\delta}{\eps}\sum_{j = 1}^{d}\alpha_j\partial_j +
	\dfrac{\nu}{\varepsilon^2}\beta\right)\Psi
	+ \left(V(\mathbf{x})I_4 - \sum_{j = 1}^{d}A_j(\mathbf{x})\alpha_j\right)\Psi, \quad \mathbf{x}\in\mathbb{R}^d,
	\end{equation}
where $\bx=(x_1,x_2)^T$ (or $\bx=(x,y)^T$) in 2D and $\bx=x_1$ (or $\bx=x$) in 1D. To study the dynamics of the Dirac equation (\ref{nondimuni}),
the initial condition is usually taken as
	\begin{equation}
	\label{initial}
	\Psi(t = 0, \mathbf{x}) = \Psi_0(\mathbf{x}), \quad \mathbf{x}\in\mathbb{R}^d.
	\end{equation}

The Dirac equation (\ref{nondimuni}) with \eqref{initial} is dispersive, time-symmetric,
and it conserves the total {\sl probability} \cite{BCJT2}
    \begin{equation}
    \label{mass}
    \|\Psi(t, \cdot)\|^2 := \int_{\mathbb{R}^d}|\Psi(t, \mathbf{x})|^2d\mathbf{x} =
    \int_{\mathbb{R}^d}\sum_{j = 1}^4|\psi_j(t, \mathbf{x})|^2d\mathbf{x}
    \equiv\|\Psi(0, \cdot)\|^2 = \|\Psi_0\|^2, \ \ t \geq 0,
    \end{equation}
and the {\sl energy} \cite{BCJT2}
\begin{eqnarray}
\label{energy}
	E(\Psi(t,\cdot)) &:=& \int_{\mathbb{R}^d}\left(- i\dfrac{\delta}{\eps}\sum_{j = 1}^{d}\Psi^*\alpha_j\partial_j\Psi +
	\dfrac{\nu}{\varepsilon^2}\Psi^*\beta\Psi
	+ V(\mathbf{x})|\Psi|^2 - \sum_{j = 1}^{d}A_j(\mathbf{x})\Psi^*\alpha_j\Psi\right) d\mathbf{x}\nonumber\\
	&\equiv& E(\Psi_0), \qquad t\ge0,
	\end{eqnarray}
where $\Psi^* = \overline{\Psi}^T$ with $\overline{f}$ denoting the complex conjugate of $f$.

Introduce the total probability density$\rho:=\rho(t,\bx)$ as
\begin{equation}
    \label{density}
    \rho(t, \mathbf{x}) = \sum_{j = 1}^4\rho_j(t, \mathbf{x}) = \Psi(t,\bx)^*\Psi(t,\bx),\quad \mathbf{x}\in\mathbb{R}^d,
    \end{equation}
where the probability density $\rho_j:=\rho_j(t,\bx)$ of the $j$-th ($j=1,2,3,4$) component is defined as
    \begin{equation}
    \rho_j(t, \mathbf{x}) = |\psi_j(t, \mathbf{x})|^2,\quad \mathbf{x}\in\mathbb{R}^d,
    \end{equation}
and the current density $\mathbf{J}(t, \mathbf{x}) = (J_1(t, \mathbf{x}),
    \ldots, J_d(t, \mathbf{x})))^T$  as
    \begin{equation}
    \label{current}
    J_l(t, \mathbf{x}) = \frac{1}{\eps}\Psi(t,\bx)^*\alpha_l\Psi(t,\bx), \quad l = 1,\ldots,d,
    \end{equation}
then the following conservation law can be obtained from the
 Dirac equation (\ref{nondimuni}) \cite{BCJT2}
 \begin{equation}
 \label{cons11}
    \partial_t\rho(t, \mathbf{x}) + \nabla\cdot\mathbf{J}(t, \mathbf{x}) = 0,
    \quad \mathbf{x}\in\mathbb{R}^d, \quad t\geq 0.
    \end{equation}

If the electric potential $V$ is perturbed by a real constant $V^0$,
i.e., $V\to V + V^0$, then the solution
$\Psi(t,\bx)\to e^{-i\frac{V^0t}{\delta}}\Psi(t, \mathbf{x})$,
which implies that the probability density of each component
$\rho_j (j = 1, 2, 3, 4)$ and the total probability density $\rho$ are
all unchanged. In addition, when $d = 1$,
if the magnetic potential $A_1$ is perturbed by a real constant $A_1^0$,
i.e., $A_1\to A_1 + A_1^0$, then the solution
$\Psi(t,\bx)\to e^{i\frac{A_1^0t}{\delta}\alpha_1}\Psi(t, \mathbf{x})$, which implies that only the total probability density $\rho$ is unchanged; however,
this property is unfortunately not valid in 2D and 3D.
Furthermore, if the external electromagnetic potentials are all real constants, i.e. $V(\mathbf{x}) \equiv V^0$ and $A_j(\mathbf{x}) \equiv A_j^0$ ($j = 1, \ldots,d$) with $\mathbf{A}^0 = (A_1^0, ...,
	A_d^0)^T$, the Dirac equation (\ref{nondimuni}) admits the plane wave solution
	$\Psi(t, \mathbf{x}) = \mathbf{B}e^{i(\mathbf{k}\cdot\mathbf{x} - \frac{\omega}{\delta} t)}$ with $\omega$  the time
	frequency, $\mathbf{B}\in\mathbb{R}^4$  the amplitude vector and
	$\mathbf{k} = (k_1, ..., k_d)^T\in \mathbb{R}^d$  the spatial wave number, which satisfies the following {eigenvalue problem}
	\begin{equation}
	 \omega\mathbf{B}=\left(\sum_{j = 1}^d\left(\frac{\delta k_j}{\eps} - A_j^0\right)\alpha_j + \frac{\nu}{\eps^2}\beta
		+ V^0I_4\right)\mathbf{B}.
	\end{equation}
Solving the above equation, we can get
the {\sl dispersion relation} of the Dirac equation (\ref{nondimuni})
	\begin{equation}
\label{disp}
	\omega := \omega(\mathbf{k}) = V^0 \pm \frac{1}{\eps^2}\sqrt{\nu^2 + \eps^2|\delta\mathbf{k} -
		\eps\mathbf{A}^0|^2}, \quad \mathbf{k}\in\mathbb{R}^d.
	\end{equation}
	
In 2D and 1D, i.e. $d=2$ or $1$ in (\ref{nondimuni}),
similar as those in \cite{BCJT}, the Dirac equation (\ref{nondimuni})
can be decoupled into two simplified PDEs with
$\Phi:=\Phi(t,\bx)=(\phi_1(t,\bx),\phi_2(t,\bx))^T\in\mathbb{C}^2$ satisfying
\begin{equation}
	\label{LDirac1d2d}
	i\delta\partial_t\Phi = \left(-i\dfrac{\delta}{\eps}\sum_{j = 1}^{d}\sigma_j\partial_j + \dfrac{\nu}{\eps^2}
	\sigma_3\right)\Phi + \left(V(\mathbf{x})I_2 - \sum_{j = 1}^{d}A_j(\mathbf{x})\sigma_j\right)\Phi, \quad \mathbf{x} \in\mathbb{R}^d,
	\end{equation}
where $\Phi = (\psi_1, \psi_4)^T$ (or $\Phi=(\psi_2,\psi_3)^T$).
Again, to study the dynamics of the Dirac equation (\ref{LDirac1d2d}),
the initial condition is usually taken as
	\begin{equation}
	\label{initial11}
	\Phi(t = 0, \mathbf{x}) = \Phi_0(\mathbf{x}), \quad \mathbf{x}\in\mathbb{R}^d.
	\end{equation}
Similarly, the Dirac equation (\ref{LDirac1d2d}) with \eqref{initial11} is dispersive, time-symmetric,
and it conserves the total {\sl probability} \cite{BCJT2}
   \bea
   \label{mass11}
    \|\Phi(t, \cdot)\|^2 &:=& \int_{\mathbb{R}^d}|\Phi(t, \mathbf{x})|^2d\mathbf{x} =
    \int_{\mathbb{R}^d}\sum_{j = 1}^2|\phi_j(t, \mathbf{x})|^2d\mathbf{x}\nonumber\\
    &\equiv& \|\Phi(0, \cdot)\|^2 = \|\Phi_0\|^2, \quad t \geq 0,
    \eea
and the {\sl energy} \cite{BCJT2}
\begin{eqnarray}
\label{energy11}
	E(\Phi(t,\cdot)) &:=& \int_{\mathbb{R}^d}\left(- i\dfrac{\delta}{\eps}\sum_{j = 1}^{d}\Phi^*\sigma_j\partial_j\Phi +
	\dfrac{\nu}{\varepsilon^2}\Phi^*\sigma_3\Phi
	+ V(\mathbf{x})|\Phi|^2 - \sum_{j = 1}^{d}A_j(\mathbf{x})\Phi^*\sigma_j\Phi\right) d\mathbf{x}\nonumber\\
	& \equiv& E(\Phi_0), \qquad t\ge0.
	\end{eqnarray}
Again, introduce the total probability density $\rho:=\rho(t,\bx)$ as
\begin{equation}
    \label{density11}
    \rho(t, \mathbf{x}) = \sum_{j = 1}^2\rho_j(t, \mathbf{x}) = \Phi(t,\bx)^*\Phi(t,\bx),\quad \mathbf{x}\in\mathbb{R}^d,
    \end{equation}
where the probability density $\rho_j:=\rho_j(t,\bx)$ of the $j$-th ($j=1,2$) component is defined as
    \begin{equation}
    \rho_j(t, \mathbf{x}) = |\phi_j(t, \mathbf{x})|^2,\quad \mathbf{x}\in\mathbb{R}^d,
    \end{equation}
and the current density $\mathbf{J}(t, \mathbf{x}) = (J_1(t, \mathbf{x}),
    \ldots, J_d(t, \mathbf{x})))^T$  as
    \begin{equation}
    \label{current11}
    J_l(t, \mathbf{x}) = \frac{1}{\eps}\Phi(t,\bx)^*\sigma_l\Phi(t,\bx), \quad l = 1,\ldots,d,
    \end{equation}
then the same conservation law \eqref{cons11} can be obtained from the
 Dirac equation (\ref{LDirac1d2d}) \cite{BCJT2}.

Similarly, if the electric potential $V$ is perturbed by a real constant $V^0$,
i.e., $V\to V + V^0$, then the solution
$\Phi(t,\bx)\to e^{-i\frac{V^0t}{\delta}}\Phi(t, \mathbf{x})$,
which implies that the probability density of each component
$\rho_j (j = 1, 2)$ and the total probability density $\rho$ are
all unchanged. In addition, when $d = 1$,
if the magnetic potential $A_1$ is perturbed by a real constant $A_1^0$,
i.e., $A_1\to A_1 + A_1^0$, then the solution
$\Phi(t,\bx)\to e^{i\frac{A_1^0t}{\delta}\sigma_1}\Phi(t, \mathbf{x})$, which implies that only the total probability density $\rho$ is unchanged; however,
this property is unfortunately not valid in 2D.
Furthermore, if the external electromagnetic potentials are all real constants, i.e. $V(\mathbf{x}) \equiv V^0$ and $A_j(\mathbf{x}) \equiv A_j^0$ ($j = 1, \ldots,d$) with $\mathbf{A}^0 = (A_1^0, ...,
	A_d^0)^T$, the Dirac equation (\ref{LDirac1d2d}) admits the plane wave solution
	$\Phi(t, \mathbf{x}) = \mathbf{B}e^{i(\mathbf{k}\cdot\mathbf{x} - \frac{\omega}{\delta} t)}$ with $\omega$  the time
	frequency, $\mathbf{B}\in\mathbb{R}^2$  the amplitude vector and
	$\mathbf{k} = (k_1, ..., k_d)^T\in \mathbb{R}^d$  the spatial wave number, which satisfies the following {eigenvalue problem}
	\begin{equation}
	 \omega\mathbf{B}=\left(\sum_{j = 1}^d\left(\frac{\delta k_j}{\eps} - A_j^0\right)\sigma_j + \frac{\nu}{\eps^2}\sigma_3
		+ V^0I_2\right)\mathbf{B}.
	\end{equation}
Solving the above equation, we can get
the {\sl dispersion relation} of the Dirac equation (\ref{LDirac1d2d})
	\begin{equation}
\label{disp11}
	\omega := \omega(\mathbf{k}) = V^0 \pm \frac{1}{\eps^2}\sqrt{\nu^2 + \eps^2|\delta\mathbf{k} -
		\eps\mathbf{A}^0|^2}, \quad \mathbf{k}\in\mathbb{R}^d.
	\end{equation}

If one sets the mass unit $m_s=m$, length unit $x_s =\frac{\hbar}{mc}$,
 and time unit $t_s=\frac{x_s}{c}=\frac{\hbar}{mc^2}$,
then $\eps=\delta=\nu=1$, which corresponds
to the classical (or standard) scaling. This choice of $x_s$, $m_s$ and $t_s$ is appropriate when the wave speed is at the same order of the speed of light. However, a different choice of $x_s$, $m_s$ and $t_s$ is more appropriate when the wave speed is much smaller than the speed of light. We remark
here that the choice of $x_s$, $m_s$ and $t_s$ determines the observation scale of time evolution of the
system and decides which phenomena can be resolved by discretization on specified spatial/temporal grids and which phenomena is ‘visible’ by asymptotic analysis.

Different parameter regimes could be considered for the Dirac equation
\eqref{nondimuni} (or \eqref{LDirac1d2d}), which are
displayed in Fig. \ref{scaling_summary}:

	
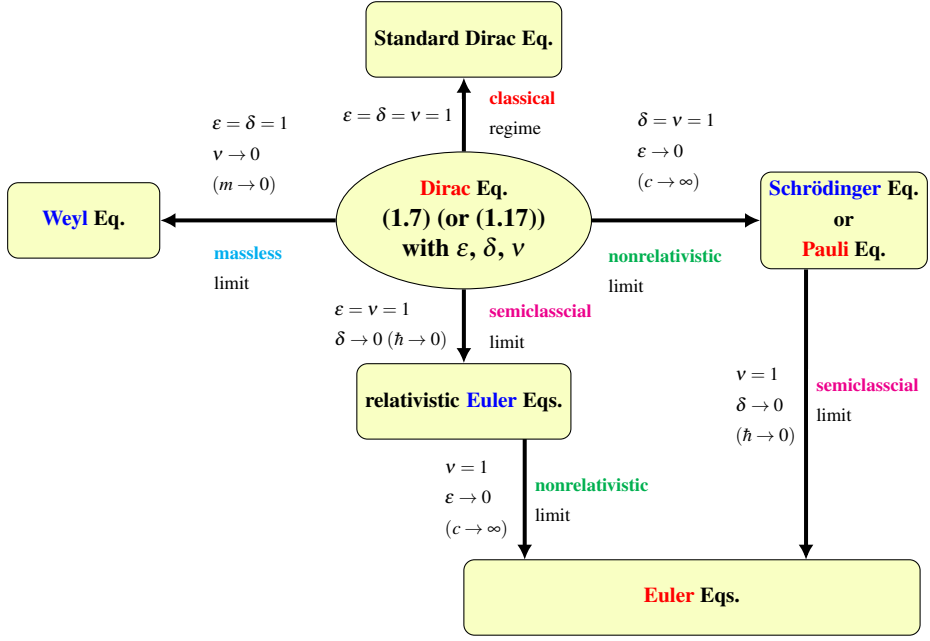
\begin{figure}
	\begin{tikzpicture}
		[node distance=10mm,
		every node/.style={fill=white}, align=center]
		\node(Dirac)    [Dirac, align = center]  {\footnotesize \textbf{\textcolor{red}{Dirac} Eq.}\\ \textbf{(\ref{nondimuni}) (or (\ref{LDirac1d2d}))} \\ \textbf{with $\eps$, $\delta$, $\nu$}};
		\node(standard)  [others, above of  = Dirac, yshift = 1.4cm]  {\footnotesize \textbf{Standard Dirac Eq.}};
		\node(Weyl)     [others, left of = Dirac, xshift=-40mm]   {\footnotesize \textbf{\textcolor{blue}{Weyl} Eq.}};
		\node(Schrodinger)  [others, right of = Dirac, xshift = 40mm, align = center]
		{\textbf{\footnotesize\textcolor{blue}{Schr\"{o}dinger} Eq.} \\ \footnotesize\textbf{or}
			\\ \footnotesize\textbf{\textcolor{red}{Pauli} Eq.}};
		\node(rEuler)   [others, below of = Dirac, yshift = -14mm]  {\footnotesize\textbf{relativistic \textcolor{blue}{Euler}
				Eqs.}};
		\node(Euler)    [others, below of = rEuler, xshift = 3cm, yshift = -1.6cm, minimum width = 6cm]
		{\footnotesize\textbf{\textcolor{red}{Euler} Eqs.}};
		
		
		\draw [ ->,line width=1.5pt]       (Dirac) -- node[above, align = flush left, xshift = 0.32mm, yshift = 2mm]{\scriptsize
			$\varepsilon = \delta = 1$\\ \scriptsize $\nu\rightarrow 0$\\ \scriptsize$(m\rightarrow 0)$}
		node[below, align = flush left, yshift = -2mm]{\scriptsize{\textbf{\textcolor{cyan}{massless}}}\\
			\scriptsize{limit}}(Weyl);
		\draw [ ->,line width=1.5pt]       (Dirac) -- node[above, align = flush left, yshift =
		2.4mm]{\scriptsize $\delta = \nu = 1$ \\ \scriptsize $\varepsilon\rightarrow 0$\\
			\scriptsize $(c\rightarrow\infty)$}
		node[below, align = flush left, xshift = -1.4mm, yshift = -2.4mm]{\scriptsize \textbf{\textcolor{color2}{nonrelativistic}}
			\\ \scriptsize limit}(Schrodinger);
		\draw [ ->,line width=1.5pt]       (Dirac) -- node[left]{\scriptsize$\varepsilon = \delta = \nu = 1$}
		node[right, align = flush left, xshift = 2mm]{\scriptsize\textbf{\textcolor{red}{classical}} \\
			\scriptsize{regime}} (standard);
		\draw [ ->,line width=1.5pt]     (Dirac) -- node[left, align = flush left, xshift = -0.8mm]
		{\scriptsize $\varepsilon = \nu = 1$ \\
			\scriptsize $\delta\rightarrow 0$ $(\hbar\rightarrow 0)$}
		node[right, align = flush left, xshift = 2mm]{\scriptsize \textbf{\textcolor{magenta}{semiclasscial}} \\
			\scriptsize {limit}} (rEuler);
		\draw [ ->,line width=1.5pt]   (rEuler)+(8mm, -5mm) --
		node[left, align = flush left, xshift = -0.8mm]{\scriptsize $\nu = 1$
			\\\scriptsize $\varepsilon\rightarrow 0$ \\ \scriptsize $(c\rightarrow \infty)$}
		node[right, align = flush left]{\scriptsize \textbf{\textcolor{color2}{nonrelativistic}} \\ \scriptsize {limit}}
		++(8mm, -21mm)(Euler);
		\draw [ ->,line width=1.5pt, in = -60]    (Schrodinger)+(-5mm, -6.5mm) --
		node[left, align = flush left, yshift = 1mm]
		{\scriptsize $\nu = 1$ \\ \scriptsize $\delta\rightarrow 0$ \\ \scriptsize $(\hbar\rightarrow 0)$}
		node[right, align = flush left, yshift = 2mm]{\scriptsize\textbf{\textcolor{magenta}{semiclasscial}}　\\
			\scriptsize {limit}}
		++(-5mm, -45mm)(Euler);
	\end{tikzpicture}
	\caption{Diagram of different parameter regimes and limits of the Dirac equation (\ref{nondimuni}) (or \eqref{LDirac1d2d}).}
	\label{scaling_summary}
\end{figure}

\begin{itemize}
\item Standard (or classical) regime, i.e. $\eps =\delta =\nu = 1$ ($\Longleftrightarrow m_s=m$,
$x_s =\frac{\hbar}{mc}$,
 and $t_s=\frac{\hbar}{mc^2}$), the wave speed is at
the order of the speed of light. In this parameter regime,
formally the dispersion relation \eqref{disp} (or \eqref{disp11}) suggests
$\omega(\mathbf{k})=O(1)$ when $|\mathbf{k}|=O(1)$ and thus
the solution propagates waves with wavelength at $O(1)$ in space and time.
In addition, if the initial data $\Psi_0=O(1)$ in \eqref{initial} (or $\Phi_0=O(1)$ in \eqref{initial11}), then the solution $\Psi=O(1)$ of \eqref{nondimuni} with \eqref{initial}
(or $\Phi=O(1)$ of \eqref{LDirac1d2d} with \eqref{initial11}),
which implies that the probability density $\rho=O(1)$ in \eqref{density} (or \eqref{density11}), current density
$\mathbf{J}=O(1)$ in \eqref{current} (or \eqref{current11}) and the energy $E(\Psi(t,\cdot))=O(1)$ in \eqref{energy}
(or  $E(\Phi(t,\cdot))=O(1)$ in \eqref{energy11}).
There were extensive analytical and numerical studies for the Dirac equation \eqref{nondimuni} (or \eqref{LDirac1d2d}) with
$\eps=\delta=\nu=1$ in the literatures. For the existence and multiplicity of
bound states and/or standing wave solutions, we
refer to \cite{Das,DK,ES,GGT,Gross,Ring} and references therein. In this parameter regime,
for the
numerical part, many efficient and accurate numerical methods have been proposed and analyzed \cite{AL}, such as the finite difference time domain (FDTD)
methods \cite{ALSFB,NSG}, time-splitting Fourier pseudospectral (TSFP) method \cite{BCJT2,HJM,Cao}, exponential wave integrator Fourier pseudospectral (EWI-FP) method \cite{BCJT2}, the Gaussian
beam method \cite{WHJY}, etc.

\item Massless limit regime, i.e. $\eps = \delta = 1$ and $0 < \nu \ll 1$ ($\Longleftrightarrow x_s =\frac{\hbar}{m_sc}$ and $t_s=\frac{\hbar}{m_sc^2}$), the mass of the particle is much less than
    the mass unit. In this parameter regime, the Dirac equation \eqref{nondimuni} (or \eqref{LDirac1d2d}) converges -- regularly -- to the Weyl equation \cite{Ohlsson,XBAN} with linear convergence rate in terms of $\nu$. Any numerical methods for
the Dirac equation \eqref{nondimuni} (or \eqref{LDirac1d2d}) in the standard regime can be applied in this parameter regime.

\item Nonrelativistic limit regime, i.e. $\delta = \nu = 1$ and $0 < \eps\ll 1$ ($\Longleftrightarrow m_s=m$ and
$t_s =\frac{mx_s^2}{\hbar}$), i.e. the wave speed is much less than the speed of light.
In this parameter regime, formally the dispersion relation \eqref{disp} (or \eqref{disp11}) suggests
$\omega(\mathbf{k})=\eps^{-2}+O(1)$ when $|\mathbf{k}|=O(1)$ and thus
the solution propagates waves with wavelength at
$O(\eps^2)$ and $O(1)$ in time and space, respectively, when $0<\eps\ll 1$.
In addition, if the initial data $\Psi_0=O(1)$ in \eqref{initial} (or $\Phi_0=O(1)$ in \eqref{initial11}), then the solution $\Psi=O(1)$ of \eqref{nondimuni} with \eqref{initial}
(or $\Phi=O(1)$ of \eqref{LDirac1d2d} with \eqref{initial11}),
which implies that the probability density $\rho=O(1)$ in \eqref{density} (or \eqref{density11}), current density
$\mathbf{J}=O(\eps^{-1})$ in \eqref{current} (or \eqref{current11}) and the energy $E(\Psi(t,\cdot))=O(\eps^{-2})$ in \eqref{energy}
(or  $E(\Phi(t,\cdot))=O(\eps^{-2})$ in \eqref{energy11}).
The highly oscillatory nature of the solution in time
and the unboundedness of the energy bring significant difficulty
in mathematical analysis and numerical simulation of the Dirac equation
in the nonrelativistic regime, i.e. $0<\eps\ll1$.
By diagonalizing the Dirac operator and using proper ansatz,
one can show that the Dirac equation \eqref{nondimuni} (or \eqref{LDirac1d2d}) converges -- singularly -- to the Pauli equation \cite{BMP,Hunziker}
 and/or the Schr\"{o}dinger equation \cite{Bad,BMP} when $\eps\to0^+$.
Rigorous error estimates were established for the FDTD, TSFP and EWI-FP
methods in this parameter regime \cite{BCJT2}, which depend explicitly on
the mesh size $h$, time step $\tau$ and the small parameter $\eps$.
Recently, a uniformly accurate multiscale time integrator pseudospectral method was proposed and analyzed for the Dirac equation
in the nonrelativistic limit regime, which converges uniformly
with respect to $\eps\in(0,1]$ \cite{BCJT,LMZ}.	
		
\item Semiclassical limit regime, i.e. $\eps = \nu = 1$
and  $0 < \delta \ll 1$ ($\Longleftrightarrow m_s=m$ and $t_s=\frac{x_s}{c}$),
the quantum effect could be neglected.
In this parameter regime, the solution propagates waves
with wavelength at $O(\delta)$ in space and time \cite{BK} when
$0<\delta\ll1$. In addition, if the initial data $\Psi_0=O(1)$ in \eqref{initial} (or $\Phi_0=O(1)$ in \eqref{initial11}), then the solution $\Psi=O(1)$ of \eqref{nondimuni} with \eqref{initial}
(or $\Phi=O(1)$ of \eqref{LDirac1d2d} with \eqref{initial11}),
which implies that the probability density $\rho=O(1)$ in \eqref{density} (or \eqref{density11}), current density
$\mathbf{J}=O(1)$ in \eqref{current} (or \eqref{current11}) and the energy $E(\Psi(t,\cdot))=O(1)$ in \eqref{energy}
(or  $E(\Phi(t,\cdot))=O(1)$ in \eqref{energy11}).
The highly oscillatory nature of the solution in time and space
brings significant difficulty
in mathematical analysis and numerical simulation of the Dirac equation
in the semiclassical limit regime, i.e. $0<\delta\ll1$. By using
the Wigner transformation method, one can show that the Dirac equation \eqref{nondimuni} (or \eqref{LDirac1d2d}) converges -- singularly -- to the relativistic Euler equations \cite{AS,GM,Spohn}. Similar to the analysis of different
numerical methods for the Schr\"{o}dinger equation in the semiclassical
limit regime \cite{ABB,BC,BJP1,BJP2,Car1,Car2,JMS}, it is an interesting question to
establish rigorous error bounds of different numerical methods
for the Dirac equation in the semiclassical limit regime
such that they depend explicitly on mesh size $h$, time step $\tau$
as well as the small parameter $\delta\in (0,1]$.

\item Simultaneously nonrelativistic and massless limit regimes,
i.e. $\delta = 1$, $\nu\sim \eps$ and $0 < \eps \ll 1$ ($\Longleftrightarrow t_s =\frac{m_sx_s^2}{\hbar}$), the wave speed is much less than the speed of light and the mass of the particle is much less than
the mass unit. Here we assume $\nu=\nu_0 \eps$ with $\nu_0>0$ a
constant independent of $\eps\in(0,1]$. In this case, the Dirac equation \eqref{nondimuni} can be re-written as ($d=1,2,3$)
\begin{equation}
\label{nondimuni33}
	i\partial_t\Psi =  \left(- i\dfrac{1}{\eps}\sum_{j = 1}^{d}\alpha_j\partial_j +
	\dfrac{\nu_0}{\varepsilon}\beta\right)\Psi
	+ \left(V(\mathbf{x})I_4 - \sum_{j = 1}^{d}A_j(\mathbf{x})\alpha_j\right)\Psi, \quad \mathbf{x}\in\mathbb{R}^d,
	\end{equation}
and respectively, the Dirac equation \eqref{LDirac1d2d}) can be re-written as ($d=1,2$)
\begin{equation}
	\label{LDirac1d2d33}
	i\partial_t\Phi = \left(-i\dfrac{1}{\eps}\sum_{j = 1}^{d}\sigma_j\partial_j + \dfrac{\nu_0}{\eps}
	\sigma_3\right)\Phi + \left(V(\mathbf{x})I_2 - \sum_{j = 1}^{d}A_j(\mathbf{x})\sigma_j\right)\Phi, \quad \mathbf{x} \in\mathbb{R}^d.
	\end{equation}
In this parameter regime, formally the dispersion relation \eqref{disp} (or \eqref{disp11}) suggests
$\omega(\mathbf{k})=O(\eps^{-1})$ when $|\mathbf{k}|=O(1)$ and thus
the solution propagates waves with wavelength at
$O(\eps)$ and $O(1)$ in time and space, respectively, when $0<\eps\ll 1$.
In addition, if the initial data $\Psi_0=O(1)$ in \eqref{initial} (or $\Phi_0=O(1)$ in \eqref{initial11}), then the solution $\Psi=O(1)$ of \eqref{nondimuni33} with \eqref{initial}
(or $\Phi=O(1)$ of \eqref{LDirac1d2d33} with \eqref{initial11}),
which implies that the probability density $\rho=O(1)$ in \eqref{density} (or \eqref{density11}), current density
$\mathbf{J}=O(\eps^{-1})$ in \eqref{current} (or \eqref{current11}) and the energy $E(\Psi(t,\cdot))=O(\eps^{-1})$ in \eqref{energy}
(or  $E(\Phi(t,\cdot))=O(\eps^{-1})$ in \eqref{energy11}).
Again, the highly oscillatory nature of the solution in time
and the unboundedness of the energy bring significant difficulty
in mathematical analysis and numerical simulation of the Dirac equation
in this parameter regime. In fact, it is an interesting question to
study the singular limit of the Dirac equation
\eqref{nondimuni33} (or  \eqref{LDirac1d2d33}) when $\eps\to 0^+$
and  establish rigorous error bounds of different numerical methods
for the Dirac equation in this parameter regime
such that they depend explicitly on mesh size $h$, time step $\tau$
as well as the small parameter $\eps\in(0,1]$.
\end{itemize}

First-order and second-order (in time) time-splitting spectral methods
have been proposed and analyzed for the Dirac equation \eqref{nondimuni} (or  \eqref{LDirac1d2d}) \cite{BCJT2}. Extension to higher order, e.g. fourth-order, time-splitting spectral methods can be done straightforward
by adapting the high order splitting methods \cite{BS,MQ,Suzuki2}, e.g.
the standard fourth-order splitting ($S_4$) \cite{FR,Suzuki0,Yoshida} or the fourth-order
partitioned Runge-Kutta ($S_\text{4RK}$) splitting method \cite{BM,Geng}. As it was observed in the literature \cite{MQ}, the $S_4$ splitting method has to use negative time step in at least one of the sub-problems at each
time interval \cite{FR,Suzuki0,Yoshida},  which causes some kind of drawbacks in practical computation,
and the number of sub-problems in the $S_\text{4RK}$ splitting method at each time interval  is much bigger than that of the $S_4$ splitting method \cite{BM},
which increases the computational cost at each time step a lot.
Motivated by the fourth-order gradient symplectic integrator
for the Sch\"{o}dinger equation invented by \cite{Chin,CC,CC2}, a new fourth-order compact time-splitting ($S_\text{4c}$) Fourier pseudospectral method will be proposed for the Dirac equation by splitting the Dirac equation into two parts together with using the double commutator between them to integrate the Dirac equation at each time interval.
The method is explicit, fourth-order in time and spectral order in space.
We compare the accuracy and efficiency as well as long time behavior of the $S_\text{4c}$ method with many other existing time-splitting methods for the Dirac equation. Numerical results demonstrate the advantage of the proposed
$S_\text{4c}$ in terms of efficiency and accuracy, especially in 1D and
high dimensions (2D and 3D) without magnetic potential. We also report the spatial/temporal resolution of the $S_\text{4c}$ method for the Dirac
equation in different parameter regimes.

The rest of the paper is organized as follows.
In section 2, we review different time-splitting schemes for
differential equations. In section 3, we calculate the double commutator
between the two parts decoupled from the Dirac equation.
A fourth-order compact time-splitting Fourier pseudospectral
method is proposed for the Dirac equation in section 4. In section 5,
we compare accuracy and efficiency as well as long time behavior
of different time-splitting methods for the Dirac equation.
In section 6, we report spatial/temporal resolution of the
fourth-order compact time-splitting Fourier pseudospectral
method for the Dirac equation in different parameter regimes.
Finally, some concluding remarks are drawn in section 7.
Throughout the paper, we adopt the
standard Sobolev spaces  and the corresponding norms and adopt $A\lesssim B$ to mean that
there exists a generic constant $C>0$ independent of $\eps$, $\tau$, $h$,
$\delta$ and  $\nu$  such that $|A|\le C\,B$.


\section{Review of different time-splitting schemes}\label{section2}
Splitting (or split-step or time-splitting) methods
have been widely used in numerically
integrating differential equations \cite{MQ}.
Combined with different spatial discretization schemes,
they have also been applied in solving
partial differential equations \cite{MQ}.
For details, we refer to \cite{Suzuki,Suzuki2,Suzuki3} and references therein.

For simplicity of notations and the convenience of readers,
here we review several time-splitting schemes
for integrating a differential equation in the form
  \begin{equation}
  \label{partial}
  \pt_tu(t, \bx) = (T + W)u(t, \bx),
  \end{equation}
with the initial data
\begin{equation}
\label{it55}
u(0, \bx) = u_0(\bx),
\end{equation}
where $T$ and $W$ are two time-independent operators.
For any time step $\tau>0$,
formally the solution of \eqref{partial} with \eqref{it55}
can be represented as
  \begin{equation}
  \label{exact}
  u(\tau, \bx) = e^{\tau(T + W)}u_0(\bx).
  \end{equation}
A splitting (or split-step or time-splitting) scheme
can be designed by approximating the operator $e^{\tau(T + W)}$
by a product of a sequence of $e^{\tau T}$ and $e^{\tau W}$ \cite{Suzuki0,Yoshida}, i.e.
  \begin{equation}
  \label{split66}
  e^{\tau(T + W)} \approx \Pi_{j = 1}^n e^{a_j\tau\, T}e^{b_j\tau\, W},
  \end{equation}
where $n\ge1$, $a_j\in{\mathbb R}$ and $b_j\in {\mathbb R}$ ($j=1,\ldots,n$) are to be
determined such that the approximation has certain order of accuracy in terms
of $\tau$ \cite{Suzuki0,Yoshida}. Without loss of generality, here we suppose that the computation for $e^{\tau W}$ is easier and/or more efficient
than that for $e^{\tau T}$.

  \subsection{First-order and second-order time-splitting methods}
Taking $n=1$ and $a_1=b_1=1$ in \eqref{split66}, one can obtain
the {\bf first-order Lie-Trotter splitting} ($S_1$) method as \cite{Trotter}
  \begin{equation}
  u(\tau, \bx) \approx S_1(\tau)u_0(\bx) := e^{\tau T}e^{\tau W}u_0(\bx).
  \end{equation}
In this method, one needs to integrate the operator $T$ once and
the operator $W$ once.
By using Taylor expansion, one can formally show the local truncation error as \cite{Strang}
\begin{equation}
\label{err_S1}
\|u(\tau,\bx)-S_1(\tau)u_0(\bx)\|\le C_1 \tau^2,
\end{equation}
where $C_1>0$ is a constant independent of $\tau$ and $\|\cdot \|$ is a norm depending on the problem. Thus the method is formally a first-order integrator \cite{MQ}.

Similarly, taking $n=2$, $a_1=0$, $b_1=\frac{1}{2}$, $a_2=1$ and
$b_2=\frac{1}{2}$, one can obtain the
{\bf second-order Strang splitting} ($S_2$) method as \cite{Strang}
  \begin{equation}
  u(\tau, \bx) \approx S_2(\tau)u_0(\bx) := e^{\frac{\tau}{2}W}e^{\tau T}e^{\frac{\tau}{2}W}u_0(\bx).
  \end{equation}
In this method, one needs to integrate the operator $T$ once and
the operator $W$ twice.
Again, by using Taylor expansion, one can formally show the local truncation error as \cite{Strang}
\begin{equation}
\label{err_S2}
\|u(\tau,\bx)-S_2(\tau)u_0(\bx)\|\le C_2 \tau^3,
\end{equation}
where $C_2>0$ is a constant independent of $\tau$ .
Thus it is formally a second-order integrator \cite{MQ}.

  \subsection{Fourth-order time-splitting methods}
  High order, especially fourth-order, splitting methods
for \eqref{partial} with \eqref{it55} via the construction
\eqref{split66} had been extensively studied in the literature \cite{Chin,CC}.

For simplicity, here we only mention a popular
{\bf fourth-order Forest-Ruth (or Yoshida) splitting}  ($S_4$) method
 \cite{FR,Suzuki0,Yoshida} as
  \begin{equation}
  u(\tau, \bx) \approx S_4(\tau)u_0(\bx) := S_2(w_1\tau)S_2(w_2\tau)S_2(w_1\tau)u_0(\bx),
  \end{equation}
where
  \begin{equation}
  w_1 = \dfrac{1}{2 - 2^{1 / 3}},
  \quad w_2 = -\dfrac{2^{1 / 3}}{2 - 2^{1 / 3}}.
  \end{equation}
In this method, one needs to integrate the operator $T$ three times and
the operator $W$ four times.
Again, by using Taylor expansion, one can formally show the local truncation error as \cite{FR}
\begin{equation}
\label{err_S4}
\|u(\tau,\bx)-S_4(\tau)u_0(\bx)\|\le C_4 \tau^5,
\end{equation}
where $C_4>0$ is a constant independent of $\tau$.
Thus it is formally a fourth-order integrator \cite{MQ}.
Due to that negative time steps, e.g. $w_2<0$, are used
in the method, in general, it cannot be applied to solve dissipative differential equations. In addition,
as it was noticed in the literature \cite{MQ},
some drawbacks of the $S_4$ method were reported,
such as the constant $C_4$ is usually much larger than
$C_1$ and $C_2$, and the fourth-order accuracy could be observed
only when $\tau$ is very small \cite{MQ,Suzuki3}.

  To overcome the drawbacks of the $S_4$ method,
the {\bf fourth-order partitioned Runge-Kutta splitting } ($S_\text{4RK}$) method
was proposed  \cite{BM,Geng} as
  \bea
  \label{4RK}
  u(\tau, \bx) &\approx& S_\text{4RK}(\tau)u_0(\bx)\\
  &:=& e^{a_1\tau W}e^{b_1\tau T}e^{a_2\tau W}
  e^{b_2\tau T}e^{a_3\tau W}e^{b_3\tau T}e^{a_4\tau W}e^{b_3\tau T}e^{a_3\tau W}e^{b_2\tau T}e^{a_2\tau W}
  e^{b_1\tau T}e^{a_1\tau W}u_0(\bx),\nonumber
  \eea
where
  \begin{equation*}
  \begin{aligned}
 & a_1 = 0.0792036964311957, \quad a_2 = 0.353172906049774,\\
 & a_3 = -0.0420650803577195, \quad a_4 = 1 - 2(a_1 + a_2 + a_3),\\
 & b_1 = 0.209515106613362, \quad b_2 = -0.143851773179818, \quad b_3 = \frac{1}{2} - (b_1 + b_2).
\end{aligned}
  \end{equation*}
In this method, one needs to integrate the operator $T$ six times and
the operator $W$ seven times.
Again, by using Taylor expansion, one can formally show the local truncation error as \cite{BM}
\begin{equation}
\label{err_4RK}
\|u(\tau,\bx)-S_\text{4RK}(\tau)u_0(\bx)\|\le \widetilde{C}_4 \tau^5,
\end{equation}
where $\widetilde{C}_4>0$ is a constant independent of $\tau$.
Thus it is formally a fourth-order integrator \cite{MQ}.
It is easy to see that the computational cost of the $S_\text{4RK}$ method
is about two times that of the $S_4$ method. In this method, negative time steps, e.g. $a_3<0$, have also
been used.

\subsection{Fourth-order compact time-splitting methods}
To avoid the negative time steps and motivated by the numerical
integration of the Schr\"{o}dinger equation \cite{Chin,CC,CC2},
a {\bf fourth-order gradient symplectic integrator} was
proposed by S. A. Chin \cite{Chin} as
\begin{equation}
\label{4A}
u(\tau, \bx) \approx S_\text{4c}(\tau)u_0(\bx) := e^{\frac{1}{6}\tau W}e^{\frac{1}{2}\tau T}e^{\frac{2}{3}\tau
	\widehat{W}}
e^{\frac{1}{2}\tau T}e^{\frac{1}{6}\tau W}u_0(\bx),
\end{equation}
where
\begin{equation}
\label{WHW1}
\widehat{W} := W + \dfrac{1}{48}\tau^2[W, [T, W]],
\end{equation}
with $[T, W] := TW - WT$ the commutator of the two operators $T$ and $W$
and $[W, [T, W]]$ a double commutator.
Again, by using Taylor expansion, one can formally show the local truncation error as \cite{Chin,CC}
\begin{equation}
\label{err_4c}
\|u(\tau,\bx)-S_\text{4c}(\tau)u_0(\bx)\|\le \widehat{C}_4 \tau^5,
\end{equation}
where $\widehat{C}_4>0$ is a constant independent of $\tau$.
Thus it is formally a fourth-order integrator \cite{MQ}.
In this method, in general,  one needs to integrate the operator $T$ twice and
the operator $W$ three times under the assumption that the computation of
$\widehat{W}$ is equivalent to that of $W$. Thus it is more efficient
than the $S_4$ and $S_\text{4RK}$ methods. In this sense,
it is more appropriate to name it as a {\bf fourth-order compact splitting }
($S_\text{4c}$) method since, at each time step, the number of sub-steps in it  is much less than those in the $S_4$ and $S_\text{4RK}$ methods.
Another advantage of the $S_\text{4c}$ method is that there is no negative time step in it.

\begin{table}[t!]\renewcommand{\arraystretch}{1.2}
	\centering
	\begin{tabular}{|c|ccccc|}
		\hline
		& $S_1$ & $S_2$ & $S_4$ & $S_\text{4RK}$ & $S_\text{4c}$  \\
		\hline
		T & 1 &	1 &	3 &	6 &	2\\
		\hline
		W & 1&	2 &	4 &	7 &	3\\
		\hline
	\end{tabular}
    \caption{The numbers of operators $T$ and $W$ to be implemented in different time-splitting methods.}
    \label{splitting_cmp}
\end{table}

\bigskip

For comparison, Table \ref{splitting_cmp} lists
the numbers of $T$ and $W$ to be integrated
 by different splitting methods.
From it, under the assumption that the computation for $e^{\tau W}$ is easier and/or more efficient than that for $e^{\tau T}$ and the computation
of $e^{\tau \widehat{W}}$ is similar to that for $e^{\tau W}$,
we could draw the following conclusions:
(i) the computational time of $S_2$ is almost the same as that
of $S_1$;
(ii) the computational time of $S_\text{4c}$ is about two times of that
of $S_2$ (or $S_1$);
 (iii) among the three fourth-order splitting methods,
$S_\text{4c}$ is the most efficient and $S_\text{4RK}$ is the most expensive.


\section{Computation for the double commutator $[W, [T, W]]$} \label{section3}
\setcounter{equation}{0}
\setcounter{table}{0}
\setcounter{figure}{0}

In this section, we first show that the double commutator $[W, [T, W]]$
is linear in $T$ and then compute it for the Dirac equations
\eqref{LDirac1d2d} for $d=1,2$ and
\eqref{nondimuni} for $d=1,2,3$.

\begin{lemma}
	\label{lemma_commutator}
	Let $T$ and $W$ be two operators, then we have
	\begin{equation}
	    \label{dcm1}
      	[W, [T, W]] = 2WTW - WWT - TWW.
	\end{equation}
Thus the double commutator $[W, [T, W]]$ is linear in $T$, i.e. for any two
operators $T_1$ and $T_2$, we have
\begin{equation}
	    \label{dcm2}
      	[W, [a_1T_1+a_2T_2, W]] = a_1[W, [T_1, W]]+a_2[W, [T_2, W]],
      \qquad a_1,a_2\in {\mathbb R}.
	\end{equation}
\end{lemma}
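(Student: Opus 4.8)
The plan is to establish the closed form \eqref{dcm1} by directly unfolding the two nested commutators and collecting monomials, after which the linearity statement \eqref{dcm2} follows for free by inspection. Throughout, the one thing to respect is that $T$ and $W$ need not commute, so every step must preserve the left-to-right ordering of the operators and no factor may be moved past another.

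First I would replace the inner commutator by its definition, $[T,W]=TW-WT$, and substitute it into the outer bracket, giving
\begin{equation}
[W,[T,W]] = W(TW-WT)-(TW-WT)W.
\end{equation}
Distributing the left $W$ across the first parenthesis produces $WTW-WWT$, while distributing the right $W$ across the second parenthesis produces $TWW-WTW$. Subtracting the second from the first and combining the two resulting copies of $WTW$ yields $2WTW-WWT-TWW$, which is precisely \eqref{dcm1}.

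For \eqref{dcm2} I would simply note that in the right-hand side of \eqref{dcm1} the operator $T$ enters exactly once, and linearly, in each of the three terms $WTW$, $WWT$, and $TWW$. Thus setting $T=a_1T_1+a_2T_2$ and using distributivity of composition over addition splits each term into its $a_1$-part and its $a_2$-part; regrouping and reading \eqref{dcm1} backwards on each group gives $a_1[W,[T_1,W]]+a_2[W,[T_2,W]]$. There is no genuine obstacle in this lemma — it is a one-line algebraic identity — and the only point requiring care is to avoid silently assuming commutativity, since in the Dirac application $T$ and $W$ will be noncommuting matrix-valued operators.
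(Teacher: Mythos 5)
Your proof is correct and follows essentially the same route as the paper: expand the nested commutators by definition, preserve operator ordering, combine the two copies of $WTW$ to obtain \eqref{dcm1}, and then read off linearity in $T$ from the fact that $T$ appears exactly once in each of the three monomials. Nothing to add.
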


\begin{proof}
Noticing $[T, W] := TW - WT$, we have
	\bea
	\label{dcm3}
	   	    [W, [T, W]] &=& [W, (TW - WT)] = W(TW - WT) - (TW - WT)W\nn \\
	   	    &=& WTW - WWT - TWW + WTW\nn\\
	   	    &=& 2WTW - WWT - TWW.
	\eea
From \eqref{dcm3}, it is easy to see that
the double commutator $[W, [T, W]]$ is linear in $T$, i.e.
\eqref{dcm2} is valid.
\end{proof}

\subsection{Double commutators of the Dirac equation in 1D}
\begin{lemma}\label{DTW1d1} For the Dirac equation \eqref{LDirac1d2d} in 1D, i.e. $d = 1$,
define
\begin{equation}
\label{TW1d}
T = -\frac{1}{\varepsilon}\sigma_1
	\partial_1 - \frac{i\nu}{\delta\varepsilon^2}\sigma_3, \qquad
W = -\frac{i}{\delta}\Bigl(V(x)I_2 - A_1(x)\sigma_1\Bigr),
\end{equation}
we have
\begin{equation}
\label{TW1d4}
[W, [T, W]] = -\frac{4i\nu}{\delta^3\eps^2}A_1^2(x)\sigma_3.
\end{equation}
\end{lemma}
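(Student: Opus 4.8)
The plan is to reduce the computation to elementary Pauli-matrix algebra by exploiting the linearity in $T$ established in Lemma \ref{lemma_commutator}. First I would split the operator $T$ in \eqref{TW1d} into its two natural pieces, the derivative part $T_1 = -\frac{1}{\varepsilon}\sigma_1\partial_1$ and the mass part $T_2 = -\frac{i\nu}{\delta\varepsilon^2}\sigma_3$, so that by \eqref{dcm2} one has $[W, [T, W]] = [W, [T_1, W]] + [W, [T_2, W]]$. The structural fact that drives everything is that $W$ is multiplication by the matrix $M(x) := -\frac{i}{\delta}(V(x)I_2 - A_1(x)\sigma_1)$, a combination of $I_2$ and $\sigma_1$ alone; consequently $[\sigma_1, M] = 0$ pointwise in $x$, since $\sigma_1$ commutes with both $I_2$ and itself.

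For the derivative part I would compute $[T_1, W]$ acting on a test spinor, carefully applying the product rule to $\partial_1(M\psi)$. The term proportional to $\partial_1\psi$ carries the factor $[\sigma_1, M]$ and therefore drops out, leaving the multiplication operator $[T_1, W] = \frac{i}{\delta\varepsilon}(V'(x)\sigma_1 - A_1'(x)I_2)$, where I use $\sigma_1^2 = I_2$. Applying $W$ once more, the outer commutator again only sees $[\sigma_1, M] = 0$ (the $I_2$ piece commutes trivially), so $[W, [T_1, W]] = 0$ identically. This cancellation is the reason no potential-derivative terms survive in \eqref{TW1d4}, and it is the step I expect to be the main obstacle, since it requires correctly tracking the derivative acting on the $x$-dependent coefficients of $W$ and recognizing that the same commutator $[\sigma_1, M]$ annihilates both the product-rule term and the outer bracket.

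For the mass part both $T_2$ and $W$ are matrix multiplication operators, so the entire computation is pointwise and purely algebraic. I would first evaluate $[T_2, W]$: the $I_2$ piece of $W$ contributes nothing because $[\sigma_3, I_2] = 0$, and using $[\sigma_3, \sigma_1] = 2i\sigma_2$ one obtains $[T_2, W] = \frac{2i\nu}{\delta^2\varepsilon^2}A_1(x)\sigma_2$. Commuting once more with $W$, only the $\sigma_1$ piece survives (as $[I_2, \sigma_2] = 0$), and $[\sigma_1, \sigma_2] = 2i\sigma_3$ together with the scalar factor $A_1(x)$ yields $[W, [T_2, W]] = -\frac{4i\nu}{\delta^3\varepsilon^2}A_1^2(x)\sigma_3$. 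Adding this to the vanishing derivative contribution gives \eqref{TW1d4}; apart from the cancellation noted above, the mass part is a short, routine application of the relations $\sigma_1^2 = I_2$, $[\sigma_3, \sigma_1] = 2i\sigma_2$ and $[\sigma_1, \sigma_2] = 2i\sigma_3$.
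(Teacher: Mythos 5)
Your proof is correct, and every intermediate formula checks out: $[T_1,W]=\frac{i}{\delta\eps}\bigl(V'(x)\sigma_1-A_1'(x)I_2\bigr)$, $[T_2,W]=\frac{2i\nu}{\delta^2\eps^2}A_1(x)\sigma_2$, and $[W,[T_2,W]]=-\frac{4i\nu}{\delta^3\eps^2}A_1^2(x)\sigma_3$, which sums to \eqref{TW1d4}. Your route differs from the paper's in how the two pieces are evaluated. The paper, like you, splits $T$ using the linearity \eqref{dcm2}, but then evaluates each double commutator through the expanded identity \eqref{dcm1}, $[W,[T,W]]=2WTW-WWT-TWW$, writing out all the triple products and letting the terms cancel; the vanishing of the derivative part appears there as the outcome of an explicit, line-by-line cancellation. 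You instead compute the inner commutator first -- using the product rule to see that $[T_1,W]$ is a multiplication operator, since the term hitting $\partial_1\psi$ carries the factor $[\sigma_1,M]=0$ -- and then observe that the outer commutator vanishes because both $W$ and $[T_1,W]$ take values in the commutative algebra spanned by $\{I_2,\sigma_1\}$. This is shorter and makes the structural reason for the cancellation transparent, which the paper's expansion leaves implicit. What the paper's heavier machinery buys is uniformity: the same \eqref{dcm1}-expansion pattern is reused essentially verbatim in Appendices A and B for the 2D and 3D cases, where your commutative-subalgebra shortcut is no longer available -- once two or three distinct Pauli or Dirac matrices appear in $W$, its matrix coefficients no longer commute with the derivative matrices, and derivative-of-potential terms genuinely survive, as seen in $F_3$ of Lemma \ref{DTW2d1} and $F_4$ of Lemma \ref{DTW3d}.
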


\begin{proof}
Combining \eqref{TW1d} and \eqref{dcm2}, we obtain	
	\begin{equation}
\label{TW1d3}
	    [W, [T, W]] = -\frac{1}{\eps}{\left[W, \left[\sigma_1\partial_1, W\right]\right]} - \frac{i\nu}{\delta\eps^2}
	    {\left[W, \left[\sigma_3, W\right]\right]}.
	\end{equation}
Noticing \eqref{dcm1} and \eqref{TW1d}, we have
	\bea
\label{Ts1d1}
		{\left[W, \left[\sigma_1\partial_1, W\right]\right]}&=&2\left(-\frac{i}{\delta}\big(V(x)I_2 - A_1(x)\sigma_1\big)\right)
		\left(\sigma_1\partial_1\right)\left(-\frac{i}{\delta}\big(V(x)I_2 - A_1(x)\sigma_1\big)\right) \nn \\
		 && - \left(-\frac{i}{\delta}\big(V(x)I_2 - A_1(x)\sigma_1\big)\right)^2\left(\sigma_1\partial_1\right)
		- \left(\sigma_1\partial_1\right)\left(-\frac{i}{\delta}\big(V(x)I_2 - A_1(x)\sigma_1\big)\right)^2\nn\\
		&=& -\frac{2}{\delta^2}\big(V(x)I_2 - A_1(x)\sigma_1\big)\sigma_1\partial_1\big(V(x)I_2 - A_1(x)\sigma_1\big)\nn\\
		&& + \frac{1}{\delta^2}\big(V(x)I_2 - A_1(x)\sigma_1\big)^2\sigma_1\partial_1
 + \frac{1}{\delta^2}\sigma_1\partial_1\big(V(x)I_2 - A_1(x)\sigma_1\big)^2\nn\\
		&=& -\frac{2}{\delta^2}\sigma_1\big(V(x)I_2 - A_1(x)\sigma_1\big)\partial_1\big(V(x)I_2 - A_1(x)\sigma_1\big)\nn\\
		&& -\frac{2}{\delta^2}
		\sigma_1\big(V(x)I_2 - A_1(x)\sigma_1\big)^2\partial_1 +\frac{2}{\delta^2}\sigma_1\big(V(x)I_2 - A_1(x)\sigma_1\big)^2\partial_1\nn\\
		&& + \frac{2}{\delta^2}\sigma_1\big(V(x)I_2 - A_1(x)\sigma_1\big)\partial_1\big(V(x)I_2 - A_1(x)\sigma_1\big)\nn\\
	&=& 0.
	\eea
	\bea
\label{Ts1d2}
		{[W, [\sigma_3, W]]}
		&=& 2\left(-\frac{i}{\delta}\big(V(x)I_2 - A_1(x)\sigma_1\big)\right)\sigma_3\left(-\frac{i}{\delta}
		\big(V(x)I_2 - A_1(x)\sigma_1\big)\right)\nn \\ &&-\left(-\frac{i}{\delta}\big(V(x)I_2 - A_1(x)\sigma_1\big)\right)^2\sigma_3
		- \sigma_3\left(-\frac{i}{\delta}\big(V(x)I_2 - A_1(x)\sigma_1\big)\right)^2\nn\\
		&=& -\frac{2}{\delta^2}\big(V(x)I_2 - A_1(x)\sigma_1\big)\big(V(x)I_2 + A_1(x)\sigma_1\big)\sigma_3
		+ \frac{1}{\delta^2}\big(V(x)I_2 - A_1(x)\sigma_1\big)^2\sigma_3\nn\\
        && + \frac{1}{\delta^2}\big(V(x)I_2 + A_1(x)\sigma_1\big)^2\sigma_3\nn\\
		&=& -\frac{1}{\delta^2}\bigg(2V^2(x)I_2 - 2A_1^2(x)I_2 - \big(V^2(x)I_2 + A_1^2(x)I_2 -2A_1(x)V(x)\sigma_1\big)\nn\\
		&& - \big(V^2(x)I_2 + A_1^2(x)I_2 + 2A_1(x)V(x)\sigma_1\big)\bigg)\sigma_3\nn\\
		&=& -\frac{1}{\delta^2}\big(-4A_1^2(x)I_2\big)\sigma_3 = \frac{4}{\delta^2}A_1^2(x)\sigma_3.
	\eea
Plugging \eqref{Ts1d1} and \eqref{Ts1d2} into \eqref{TW1d3},
we can obtain \eqref{TW1d4} immediately.
\end{proof}

Combining \eqref{TW1d4}, \eqref{TW1d} and \eqref{WHW1},
we have
\begin{equation} \label{TW1d7}
\widehat{W}=W + \dfrac{1}{48}\tau^2[W, [T, W]]=
-\frac{i}{\delta}\Bigl(V(x)I_2 - A_1(x)\sigma_1\Bigr)-
\frac{i\nu\tau^2}{12\delta^3\eps^2}A_1^2(x)\sigma_3,
\end{equation}
which immediately implies
that the computation of $e^{\tau \widehat{W}}$ is
similar (or at almost the same computational cost) to that for $e^{\tau W}$ in  this case.

\begin{corollary}
	\label{DTW1d2}
 For the  Dirac equation \eqref{nondimuni} in 1D, i.e. $d = 1$, define
\begin{equation}
T = -\frac{1}{\eps}\alpha_1\partial_1 -
 \frac{i\nu}{\delta\eps^2}\beta, \qquad
 W = -\frac{i}{\delta}\Bigl(V(x)I_4 - A_1(x)\alpha_1\Bigr),
\end{equation}
we have
\begin{equation}
\label{TW1d5}
[W, [T, W]] = -\frac{4i\nu}{\delta^3\eps^2}A_1^2(x)\beta.
\end{equation}
\end{corollary}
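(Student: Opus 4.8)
The plan is to treat this as the four-component analogue of Lemma~\ref{DTW1d1} and to reduce the entire computation to the observation that the pair $(\alpha_1,\beta)$ obeys exactly the same algebraic relations as the pair $(\sigma_1,\sigma_3)$. From the block definitions in \eqref{matrices} one verifies directly that $\alpha_1^2=I_4$, that $\alpha_1$ commutes with $V(x)I_4-A_1(x)\alpha_1$ (since $\alpha_1$ commutes with both $I_4$ and with itself), and that $\beta$ anticommutes with $\alpha_1$, i.e. $\beta\alpha_1=-\alpha_1\beta$. These are precisely the three facts ($\sigma_1^2=I_2$, $\sigma_1$ commuting with $V I_2-A_1\sigma_1$, and $\sigma_3\sigma_1=-\sigma_1\sigma_3$) on which the proof of Lemma~\ref{DTW1d1} rests.

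First I would apply the linearity of the double commutator in $T$ from Lemma~\ref{lemma_commutator} (equation \eqref{dcm2}) to split
\[
[W,[T,W]] = -\frac{1}{\eps}\bigl[W,[\alpha_1\partial_1,W]\bigr]
- \frac{i\nu}{\delta\eps^2}\bigl[W,[\beta,W]\bigr],
\]
mirroring \eqref{TW1d3}. Next I would show the kinetic contribution vanishes, $[W,[\alpha_1\partial_1,W]]=0$, by repeating the cancellation in \eqref{Ts1d1}: expanding through \eqref{dcm1} and commuting $\alpha_1$ past the potential factor $V(x)I_4-A_1(x)\alpha_1$ leaves four terms that cancel in pairs, with $\partial_1$ merely carried along. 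For the mass contribution I would follow \eqref{Ts1d2} to obtain $[W,[\beta,W]]=\frac{4}{\delta^2}A_1^2(x)\beta$: here $\beta\alpha_1=-\alpha_1\beta$ converts $V(x)I_4-A_1(x)\alpha_1$ into $V(x)I_4+A_1(x)\alpha_1$ when $\beta$ is moved through it, and $\alpha_1^2=I_4$ collapses the products so that the $V^2$ and the cross terms cancel and only the $A_1^2$ term survives. Substituting both pieces into the split identity gives \eqref{TW1d5}.

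I do not expect a genuine obstacle here; the only thing that must be checked is that $(\alpha_1,\beta)$ satisfies the same squaring, commuting, and anticommuting relations as $(\sigma_1,\sigma_3)$, which is immediate from \eqref{matrices} and \eqref{Pauli}. In fact, these relations say that the assignment $\sigma_1\mapsto\alpha_1$, $\sigma_3\mapsto\beta$, $I_2\mapsto I_4$ respects every operation appearing in the proof of Lemma~\ref{DTW1d1}, so one may simply read off \eqref{TW1d5} from \eqref{TW1d4} under this substitution rather than redo the calculation. This is exactly why the statement is phrased as a corollary.
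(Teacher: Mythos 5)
Your proposal is correct and is precisely the argument the paper intends: the statement is given as a corollary of Lemma~\ref{DTW1d1} with no separate proof exactly because the relations $\alpha_1^2=I_4$, $[\alpha_1,\,V(x)I_4-A_1(x)\alpha_1]=0$ and $\beta\alpha_1=-\alpha_1\beta$ allow the computation in \eqref{TW1d3}, \eqref{Ts1d1} and \eqref{Ts1d2} to be transcribed verbatim under the substitution $\sigma_1\mapsto\alpha_1$, $\sigma_3\mapsto\beta$, $I_2\mapsto I_4$. Your explicit verification of these three algebraic facts, together with the split via \eqref{dcm2} into a vanishing kinetic part and a mass part equal to $\frac{4}{\delta^2}A_1^2(x)\beta$, is exactly the content needed to justify \eqref{TW1d5}.
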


\subsection{Double commutators of the Dirac equation in 2D and 3D}

Similar to the 1D case, we have (see detailed computation in Appendix A)

\begin{lemma}
	\label{DTW2d1}
For the Dirac equation \eqref{LDirac1d2d} in 2D, i.e. $d = 2$, define
\begin{equation}
\label{TW2d1}
T = -\frac{1}{\eps}\sigma_1\partial_1 -
	\frac{1}{\eps}\sigma_2\partial_2 - \frac{i\nu}{\delta\eps^2}\sigma_3, \qquad W = -\frac{i}{\delta}
	\Bigl(V(\bx)I_2 - A_1(\bx)\sigma_1 - A_2(\bx)\sigma_2\Bigr),
\end{equation}
we have
\begin{equation}
	\label{commutator_2D}
[W, [T, W]] = F_3(\mathbf{x}) + F_1(\mathbf{x})\partial_1 + F_2(\mathbf{x})\partial_2,
	\end{equation}
where
	\beas
	\label{commutator_2D_F}
	F_1(\mathbf{x}) &=& \frac{4}{\delta^2\eps}\Bigl( - A_2^2(\bx)\sigma_1+A_1(\bx)A_2(\bx)\sigma_2\Bigr),
	\quad F_2(\mathbf{x}) = \frac{4}{\delta^2\eps}\Bigl(A_1(\bx)A_2(\bx)\sigma_1 - A_1^2(\bx)\sigma_2\Bigr), \\
	F_3(\mathbf{x}) &=& \frac{4}{\delta^2\eps}
\Big(A_1(\bx)\partial_2A_2(\bx)-A_2(\bx)\partial_1A_2(\bx)\Big)
\sigma_1+\frac{4}{\delta^2\eps}
\Big(A_2(\bx)\partial_1A_1(\bx)-A_1(\bx)\partial_2A_1(\bx)\Big)
\sigma_2\\
&&+\frac{4i}{\delta^2\eps}\left(A_2(\bx)\partial_1V(\bx)-A_1(\bx)\partial_2 V(\bx)-\frac{\nu}{\delta\eps}
	\big(A_1^2(\bx) + A_2^2(\bx)\big)\right)\sigma_3.
	\eeas
\end{lemma}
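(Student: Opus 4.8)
The plan is to reduce the computation to the one-dimensional building blocks, exploiting the linearity of the double commutator in its middle argument. First I would invoke \eqref{dcm2} to split $T$ into its three summands, obtaining
\begin{equation*}
[W,[T,W]] = -\frac{1}{\eps}\bigl[W,[\sigma_1\partial_1,W]\bigr] - \frac{1}{\eps}\bigl[W,[\sigma_2\partial_2,W]\bigr] - \frac{i\nu}{\delta\eps^2}\bigl[W,[\sigma_3,W]\bigr],
\end{equation*}
so that it suffices to evaluate the three double commutators separately and recombine. In each of them I would substitute $W=-\frac{i}{\delta}M$ with $M:=V(\mathbf{x})I_2 - A_1(\mathbf{x})\sigma_1 - A_2(\mathbf{x})\sigma_2$ and apply the explicit formula \eqref{dcm1}, namely $[W,[A,W]]=2WAW-W^2A-AW^2$.

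For the mass term $[W,[\sigma_3,W]]$ the computation is purely algebraic, exactly as in \eqref{Ts1d2}: since $\sigma_3$ carries no derivative, one only needs $W^2=-\frac{1}{\delta^2}M^2$ together with the Pauli product $M^2=(V^2+A_1^2+A_2^2)I_2-2VA_1\sigma_1-2VA_2\sigma_2$ (the cross term drops because $\sigma_1\sigma_2+\sigma_2\sigma_1=0$) and the relations $\sigma_i\sigma_j=i\epsilon_{ijk}\sigma_k$ for $i\neq j$. This yields a multiple of $\sigma_3$ proportional to $A_1^2+A_2^2$, which after multiplication by $-\frac{i\nu}{\delta\eps^2}$ produces precisely the $\nu$-dependent $\sigma_3$ contribution inside $F_3(\mathbf{x})$.

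The two differential double commutators carry the genuinely new difficulty. Here $W$ acts as multiplication by the matrix-valued function $-\frac{i}{\delta}M(\mathbf{x})$, so I would use the Leibniz rule $\partial_j(M\phi)=(\partial_j M)\phi+M\,\partial_j\phi$ to commute each derivative past the multiplication operators. Expanding $[W,[\sigma_j\partial_j,W]]$ through \eqref{dcm1} then splits each term into a first-order part, whose coefficient of $\partial_j$ is a pure matrix commutator built from $M$ and $\sigma_j$, and a zeroth-order part collecting the derivatives $\partial_j V$, $\partial_j A_1$, $\partial_j A_2$. Evaluating the first-order parts with the Pauli relations gives exactly $F_1(\mathbf{x})\partial_1$ and $F_2(\mathbf{x})\partial_2$, while the surviving zeroth-order parts, together with the $\sigma_3$ contribution above, assemble into $F_3(\mathbf{x})$.

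The main obstacle is the bookkeeping in this last step. In the 1D case the differential commutator vanished outright (see \eqref{Ts1d1}) because $[\sigma_1,M]=0$ when $M$ involves only $I_2$ and $\sigma_1$; in 2D the presence of $A_2\sigma_2$ makes both $[\sigma_1,M]$ and $[\sigma_2,M]$ nonzero, so nothing collapses and the two pieces cross-couple. One must therefore track all Pauli products carefully and combine the Leibniz-generated terms of the $\partial_1$ piece with those of the $\partial_2$ piece to produce the mixed expressions such as $A_1\partial_2 A_2 - A_2\partial_1 A_2$ in the coefficient of $\sigma_1$ and $A_2\partial_1 V - A_1\partial_2 V$ in the coefficient of $\sigma_3$. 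This is a finite but delicate matrix calculation, which I would defer to Appendix A; the only tools needed beyond those already used in Lemmas \ref{lemma_commutator} and \ref{DTW1d1} are the Pauli multiplication table and the Leibniz rule.
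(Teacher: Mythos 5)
Your proposal is correct and follows essentially the same route as the paper's own proof in Appendix A: splitting $[W,[T,W]]$ by the linearity \eqref{dcm2} into the $\sigma_1\partial_1$, $\sigma_2\partial_2$ and $\sigma_3$ pieces, evaluating each via \eqref{dcm1} with the Pauli multiplication table and the Leibniz rule, and observing (as in \eqref{Ts1d1} versus the 2D case) that the differential pieces no longer vanish because $M$ fails to commute with $\sigma_1$ and $\sigma_2$. The structural claims you make along the way (the form of $M^2$, the $\sigma_3$ term reproducing the $\nu$-dependent part of $F_3$, and the first-order parts yielding $F_1\partial_1+F_2\partial_2$) all match the computations \eqref{2Dcommutator_1}--\eqref{2Dcommutator_2} in the paper.
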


\begin{corollary}
	\label{DTW2d2}
	For the Dirac equation \eqref{nondimuni} in 2D, i.e. $d = 2$, define
\begin{equation}
\label{TW2d2}
T = -\frac{1}{\eps}\alpha_1\partial_1 -
	\frac{1}{\eps}\alpha_2\partial_2 - \frac{i\nu}{\delta\eps^2}\beta,
\quad W = -\frac{i}{\delta}
	\Bigl(V(\bx)I_2 - A_1(\bx)\alpha_1 - A_2(\bx)\alpha_2\Bigr),
\end{equation}
we have
\begin{equation}
\label{commutator_2D2}
	[W, [T, W]] = F_3(\mathbf{x})+ F_1(\mathbf{x})\partial_1 + F_2(\mathbf{x})\partial_2,
	\end{equation}
where
	\beas
F_1(\mathbf{x}) &=& \frac{4}{\delta^2\eps}\Bigl( - A_2^2(\bx)\alpha_1+A_1(\bx)A_2(\bx)\alpha_2\Bigr),
	\quad F_2(\mathbf{x}) = \frac{4}{\delta^2\eps}\Bigl(A_1(\bx)A_2(\bx)\alpha_1 - A_1^2(\bx)\alpha_2\Bigr), \\
		F_3(\mathbf{x}) &=& \frac{4}{\delta^2\eps}\Big(A_1(\bx)\partial_2A_2(\bx)-
A_2(\bx)\partial_1A_2(\bx)\Big)\alpha_1+
\frac{4}{\delta^2\eps}\Big(A_2(\bx)\partial_1A_1(\bx)-
A_1(\bx)\partial_2A_1(\bx)\Big)\alpha_2\\
&&+\frac{4i}{\delta^2\eps}\Big(A_2(\bx)\partial_1V(\bx)-
A_1(\bx)\partial_2V(\bx)\Big)\gamma\alpha_3
	- \frac{4i\nu}{\delta^3\eps^2}
	\big(A_1^2(\bx) + A_2^2(\bx)\big)\beta,
	\eeas
where
\be\label{gammam}
\gamma = \begin{pmatrix}  \mathbf{0} &I_2\\ I_2 &\mathbf{0} \end{pmatrix}. \quad
\ee
\end{corollary}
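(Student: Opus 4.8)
The plan is to avoid redoing the $4\times4$ computation from scratch and instead deduce the statement from Lemma~\ref{DTW2d1} by exploiting the algebraic parallel between $\{\alpha_1,\alpha_2,\beta\}$ and the Pauli matrices $\{\sigma_1,\sigma_2,\sigma_3\}$. First I would invoke the linearity of the double commutator in $T$, namely \eqref{dcm2}, to split, with $T,W$ as in \eqref{TW2d2},
\[
[W,[T,W]] = -\frac{1}{\eps}[W,[\alpha_1\partial_1,W]] - \frac{1}{\eps}[W,[\alpha_2\partial_2,W]] - \frac{i\nu}{\delta\eps^2}[W,[\beta,W]],
\]
exactly mirroring the decomposition used in the $2\times2$ case. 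Each piece is then expanded through \eqref{dcm1}, so the whole task reduces to simplifying products of the form $\alpha_j(\cdots)\alpha_k$ and $\beta(\cdots)$.

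Second, I would observe that $\alpha_1,\alpha_2,\beta$ obey the same Clifford relations as $\sigma_1,\sigma_2,\sigma_3$: from \eqref{matrices} one has $\alpha_j^2=\beta^2=I_4$, $\alpha_1\alpha_2=-\alpha_2\alpha_1$, and $\alpha_j\beta=-\beta\alpha_j$ for $j=1,2$, which are precisely the relations among the Pauli matrices used to derive \eqref{commutator_2D}. The only entry of the multiplication table that differs is the product of the two kinetic matrices: from \eqref{matrices} one computes $\alpha_1\alpha_2=i\,\mathrm{diag}(\sigma_3,\sigma_3)=i\,\gamma\alpha_3$ with $\gamma$ as in \eqref{gammam}, whereas $\sigma_1\sigma_2=i\sigma_3$. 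Hence every manipulation in the proof of Lemma~\ref{DTW2d1} carries over verbatim under $\sigma_1\mapsto\alpha_1$, $\sigma_2\mapsto\alpha_2$, with each $\sigma_3$ replaced according to its origin. The mass piece $[W,[\beta,W]]$ reproduces the computation leading to \eqref{Ts1d2} and yields $-\tfrac{4i\nu}{\delta^3\eps^2}(A_1^2+A_2^2)\beta$, the $\sigma_3$ there being the literal mass matrix and so mapping to $\beta$; the two kinetic pieces reproduce $F_1,F_2$ with $\sigma_j\mapsto\alpha_j$ together with the $\alpha_1,\alpha_2$ terms of $F_3$, and generate the $\gamma\alpha_3$ term, which arises from the product $\alpha_1\alpha_2$. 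Assembling the three contributions gives \eqref{commutator_2D2}.

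The main obstacle is purely bookkeeping: I must track which occurrences of $\sigma_3$ in the $2\times2$ derivation stem from the mass matrix in $T$ and which arise algebraically from $\sigma_1\sigma_2$, since these map to the two distinct $4\times4$ matrices $\beta$ and $\gamma\alpha_3=\mathrm{diag}(\sigma_3,\sigma_3)$. The factor $\nu$ is a reliable marker: the $\tfrac{\nu}{\delta\eps}(A_1^2+A_2^2)$ term in $F_3$ of \eqref{commutator_2D} carries the mass coefficient and hence maps to $\beta$, while the $\nu$-free term $A_2\partial_1V-A_1\partial_2V$ comes from the kinetic commutators and hence maps to $\gamma\alpha_3$. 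If one prefers not to rely on this correspondence, the safe fallback is a direct evaluation of the three double commutators via \eqref{dcm1} and the explicit matrices \eqref{matrices}, which is the computation to be recorded in Appendix~A; the correspondence argument merely certifies in advance that the answer must agree with the $2\times2$ formula up to the replacement $\sigma_3\mapsto\beta,\gamma\alpha_3$.
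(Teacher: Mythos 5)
Your proposal is correct. Note first that the paper itself never writes out a proof of this corollary: it is stated by analogy with Lemma~\ref{DTW2d1}, and the only explicit $4\times4$ computation in the paper is Appendix~B for the 3D case (Lemma~\ref{DTW3d}), which uses exactly the relations \eqref{matricper} you quote; the quickest paper-internal derivation of the corollary would in fact be to specialize Lemma~\ref{DTW3d} to $A_3\equiv0$ and drop all $x_3$-dependence. Your route is genuinely different and more economical: you transfer the $2\times2$ result of Lemma~\ref{DTW2d1} through the correspondence $\sigma_1\mapsto\alpha_1$, $\sigma_2\mapsto\alpha_2$, which is legitimate because the Appendix~A manipulations use only the Clifford relations (unit squares, pairwise anticommutation) together with the single product identity $\sigma_1\sigma_2=i\sigma_3$, while $\alpha_1,\alpha_2,\beta$ obey the same Clifford relations and $\alpha_1\alpha_2=i\gamma\alpha_3$. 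Moreover, your main worry --- which occurrences of $\sigma_3$ become $\beta$ and which become $\gamma\alpha_3$ --- is settled rigorously, not just heuristically, by your own first step: the linearity decomposition \eqref{dcm2} keeps the kinetic pieces $[W,[\alpha_j\partial_j,W]]$ (which involve only $\alpha_1,\alpha_2$, so every $\sigma_3$ there originates from $\sigma_1\sigma_2$ and maps to $\gamma\alpha_3$) separate from the mass piece $[W,[\beta,W]]$ (where $\sigma_3$ is the literal mass matrix and maps to $\beta$); the $\nu$-marker is a fine consistency check but is not actually needed. Two small points: the 2D mass computation you want to cite is \eqref{2Dcommutator_3} rather than its 1D analogue \eqref{Ts1d2} (same algebra, with the extra $A_2\sigma_2$ term), and in assembling the final answer the two $\sigma_3$-type terms that are merged into a single coefficient in $F_3$ of \eqref{commutator_2D} must be kept separate in the $4\times4$ formula, exactly as your argument produces them. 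As for what each approach buys: the paper's direct computation is self-contained and is needed anyway for 3D, where three kinetic matrices interact and further products such as $\alpha_2\alpha_3=i\gamma\alpha_1$ enter; your correspondence argument avoids any new computation in 2D and explains structurally why the $4\times4$ result is the $2\times2$ result with $\sigma_3$ split according to its origin.
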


For the Dirac equation (\ref{nondimuni}) in 3D, i.e. $d = 3$, we have
(see detailed computation in Appendix B)

\begin{lemma}
	\label{DTW3d}
For the Dirac equation \eqref{nondimuni} in 3D, i.e. $d = 3$, define
\begin{equation}
\label{TW3d1}
T = -\frac{1}{\eps}\sum_{j = 1}^3\alpha_j\partial_j -\frac{i\nu}{\delta\eps^2} \beta,\quad
W = -\frac{i}{\delta}\Bigl(V(\bx)I_4 - \sum_{j = 1}^3A_j(\bx)\alpha_j\Bigr),
\end{equation}
we have
	\begin{equation}
	\label{commutator_3D}
	[W, [T, W]] = F_4(\mathbf{x})+F_1(\mathbf{x})\partial_1 + F_2(\mathbf{x})\partial_2 + F_3(\mathbf{x})\partial_3,
    \end{equation}
    where
    \beas
    F_1(\mathbf{x}) &=&\frac{4}{\delta^2\eps}\Big(-\big(A_2^2(\bx) + A_3^2(\bx)\big)\alpha_1 +
    A_1(\bx)A_2(\bx)\alpha_2 + A_1(\bx)A_3(\bx)\alpha_3\Big), \\
    F_2(\mathbf{x}) &=&\frac{4}{\delta^2\eps}\Big(
    A_2(\bx)A_1(\bx)\alpha_1-\big(A_1^2(\bx) + A_3^2(\bx)\big)\alpha_2 + A_2(\bx)A_3(\bx)\alpha_3\Big), \\
    F_3(\mathbf{x}) &=&\frac{4}{\delta^2\eps}\Big(
    A_3(\bx)A_1(\bx)\alpha_1 + A_3(\bx)A_2(\bx)\alpha_2-\big(A_1^2(\bx) + A_2^2(\bx)\big)\alpha_3\Big), \\
    F_4(\mathbf{x}) &=&\frac{4}{\delta^2\eps}
    \Big(A_1(\bx)\big(\partial_2A_2(\bx)+\partial_3A_3(\bx)\big)-
    A_2(\bx)\partial_1A_2(\bx)-A_3(\bx)\partial_1A_3(\bx)
    \Big)\alpha_1\\
    &&+\frac{4}{\delta^2\eps}
    \Big(A_2(\bx)\big(\partial_1A_1(\bx)+\partial_3A_3(\bx)\big)-
    A_1(\bx)\partial_2A_1(\bx)-A_3(\bx)\partial_2A_3(\bx)
    \Big)\alpha_2\\
    &&+\frac{4}{\delta^2\eps}
    \Big(A_3(\bx)\big(\partial_1A_1(\bx)+\partial_2A_2(\bx)\big)-
    A_1(\bx)\partial_3A_1(\bx)-A_2(\bx)\partial_3A_2(\bx)
    \Big)\alpha_3\\
    &&+\frac{4i}{\delta^2\eps}
    \Big(A_1(\bx)\big(\partial_2A_3(\bx)-\partial_3A_2(\bx)\big)+
    A_2(\bx)\big(\partial_3A_1(\bx)-\partial_1A_3(\bx)\big)\\
    &&+   A_3(\bx)\big(\partial_1A_2(\bx)-\partial_2A_1(\bx)\big)
    \Big)\gamma + \frac{4i}{\delta^2\eps}\Big(A_3(\bx)\partial_2V(\bx) - A_2(\bx)\partial_3V(\bx)\Big)\gamma\alpha_1\\
    && + \frac{4i}{\delta^2\eps}\Big(A_1(\bx)\partial_3V(\bx) - A_3(\bx)\partial_1V(\bx)\Big)\gamma\alpha_2\\
    &&+ \frac{4i}{\delta^2\eps}\Big(A_2(\bx)\partial_1V(\bx) - A_1(\bx)\partial_2V(\bx)\Big)\gamma\alpha_3
    - \frac{4i\nu}{\delta^3\eps^2}\Big(A_1^2(\bx) + A_2^2(\bx) + A_3^2(\bx)\Big)\beta.
    \eeas
\end{lemma}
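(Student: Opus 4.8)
The plan is to reduce everything to Lemma~\ref{lemma_commutator} and then carry out finite-dimensional matrix algebra, exploiting the Clifford relations of the Dirac matrices. First I would invoke the linearity identity \eqref{dcm2}: writing $T$ as the sum of its four pieces $-\frac{1}{\eps}\alpha_j\partial_j$ ($j=1,2,3$) and $-\frac{i\nu}{\delta\eps^2}\beta$, the double commutator splits as
\begin{equation}
[W,[T,W]] = -\frac{1}{\eps}\sum_{j=1}^{3}\bigl[W,[\alpha_j\partial_j,W]\bigr] - \frac{i\nu}{\delta\eps^2}\bigl[W,[\beta,W]\bigr].
\end{equation}
For each summand I would apply the explicit formula \eqref{dcm1}, $[W,[T_0,W]] = 2WT_0W - WWT_0 - T_0WW$. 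It is convenient to set $W = -\frac{i}{\delta}M(\bx)$ with $M(\bx) = V(\bx)I_4 - \sum_{k=1}^{3}A_k(\bx)\alpha_k$, so that each $[W,[T_0,W]]$ picks up an overall factor $-\frac{1}{\delta^2}$ and is expressed through $M$ and $T_0$ alone. The key algebraic inputs throughout are $\alpha_j^2=\beta^2=I_4$, $\alpha_j\alpha_k+\alpha_k\alpha_j=2\delta_{jk}I_4$, $\alpha_j\beta=-\beta\alpha_j$, and the cyclic products $\alpha_1\alpha_2=i\gamma\alpha_3$, $\alpha_2\alpha_3=i\gamma\alpha_1$, $\alpha_3\alpha_1=i\gamma\alpha_2$ with $\gamma$ as in \eqref{gammam}.

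The mass piece is the easiest, since $\beta$ is a pure multiplication operator: it collapses to $-\frac{1}{\delta^2}[M,[\beta,M]]$, a purely matrix computation. Here I would use $\beta\alpha_k=-\alpha_k\beta$ together with $\alpha_j\alpha_k+\alpha_k\alpha_j=2\delta_{jk}I_4$; the $V$-terms drop because $VI_4$ commutes with every matrix, and the anticommutator forces the surviving products to share equal indices, leaving $[M,[\beta,M]] = -4\bigl(A_1^2+A_2^2+A_3^2\bigr)\beta$ and hence the contribution $-\frac{4i\nu}{\delta^3\eps^2}\bigl(A_1^2+A_2^2+A_3^2\bigr)\beta$ to $F_4$.

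For each kinetic piece I would treat $\partial_j$ as an operator and use the product rule $\partial_j M = (\partial_j M) + M\partial_j$, where $(\partial_j M)$ denotes multiplication by the componentwise derivative. Expanding $-\frac{2}{\delta^2}M\alpha_j\partial_j M + \frac{1}{\delta^2}M^2\alpha_j\partial_j + \frac{1}{\delta^2}\alpha_j\partial_j M^2$ and sorting by whether a factor $\partial_j$ survives splits the result into two groups. The terms that retain $\partial_j$ assemble, after multiplying by $-\frac{1}{\eps}$, into the first-order coefficient $F_j = \frac{1}{\delta^2\eps}[M,[\alpha_j,M]]$; evaluating this commutator with the Clifford relations (the $V$-terms again drop, and two anticommutators produce the factor $4$) yields exactly the quadratic-in-$A$ expressions of the lemma, which extend the 2D result of Lemma~\ref{DTW2d1} by the third direction. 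The remaining purely multiplicative terms carry the derivatives $(\partial_j M)$ and feed into $F_4$; rewriting the products $\alpha_j\alpha_k$ ($j\neq k$) via the cyclic identities converts them into the $\gamma$ and $\gamma\alpha_l$ structures visible in $F_4$, while $\alpha_j^2=I_4$ handles the equal-index products.

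The main obstacle is the bookkeeping in the zeroth-order part $F_4$: the derivative $\partial_j$ fails to commute with the potentials and the $\alpha$-matrices anticommute, so the surviving contributions are delicate antisymmetric combinations whose signs must be tracked carefully. Summing over $j$, the $\alpha_l$-type structures collect into the divergence-like coefficients (e.g. $A_1(\partial_2A_2+\partial_3A_3)-A_2\partial_1A_2-A_3\partial_1A_3$ on $\alpha_1$), the $\mathbf{A}\times\nabla V$ cross products attach to $\gamma\alpha_1,\gamma\alpha_2,\gamma\alpha_3$, and—unlike in 2D—a genuinely three-dimensional curl term $A_1(\partial_2A_3-\partial_3A_2)+A_2(\partial_3A_1-\partial_1A_3)+A_3(\partial_1A_2-\partial_2A_1)$ appears attached to $\gamma$. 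Matching every sign in these combinations against \eqref{commutator_3D} is where care is essential; the computation is otherwise routine once the product rule and the Clifford algebra are applied systematically, which is why the full details are deferred to Appendix B.
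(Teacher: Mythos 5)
Your proposal is correct and follows essentially the same route as the paper's Appendix B: split $[W,[T,W]]$ via the linearity in Lemma~\ref{lemma_commutator}, evaluate each of the four pieces with the explicit formula \eqref{dcm1}, the product rule, and the Clifford relations \eqref{matricper}, then recombine. Your observation that the first-order coefficients reduce to the pure matrix double commutators $F_j=\frac{1}{\delta^2\eps}[M,[\alpha_j,M]]$ (with $W=-\frac{i}{\delta}M$) is a clean way to organize the same computation, and your signs (e.g.\ $[M,[\beta,M]]=-4(A_1^2+A_2^2+A_3^2)\beta$) check out against \eqref{commutator_3D}.
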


From Lemmas \ref{DTW1d1}, \ref{DTW2d1} and \ref{DTW3d} and Corollaries \ref{DTW1d2} and \ref{DTW2d2}, it is easy to observe that the double commutator will vanish
when the Dirac
equation \eqref{LDirac1d2d} (or \eqref{nondimuni}) has no magnetic potentials.

\begin{lemma}
	\label{DTW_nonmag}
For the Dirac equation \eqref{LDirac1d2d} in 1D and 2D, and for the Dirac equation \eqref{nondimuni} in 1D, 2D and 3D, when there is no magnetic potential, i.e., when $A_1(\bx) = A_2(\bx) = A_3(\bx) \equiv 0$, we have
	\begin{equation}
	\label{DC_nonmag}
	[W, [T, W]] = 0.
	\end{equation}
\end{lemma}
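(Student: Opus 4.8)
The plan is to give two independent arguments: the first an immediate consequence of the formulas already established, and the second a short structural computation valid in every dimension at once.

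First I would simply read off the conclusion from Lemmas \ref{DTW1d1}, \ref{DTW2d1}, \ref{DTW3d} together with Corollaries \ref{DTW1d2} and \ref{DTW2d2}. In each of these the double commutator is displayed as a matrix-valued expression whose every term carries, as a factor, either a product $A_i(\bx)A_j(\bx)$, a square $A_j^2(\bx)$, or a first-derivative combination such as $A_j(\bx)\partial_k A_l(\bx)$ of the magnetic-potential components; there is no magnetic-potential-free term anywhere. Hence substituting $A_1(\bx)=A_2(\bx)=A_3(\bx)\equiv 0$ annihilates all of $F_1,\dots,F_4$ (and, in 1D, the single coefficient $-\frac{4i\nu}{\delta^3\eps^2}A_1^2(x)$), which gives \eqref{DC_nonmag} with no new computation beyond inspection.

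For a cleaner, dimension-independent derivation I would instead exploit the structure of $W$ once the magnetic coupling is switched off. Setting $A_j\equiv 0$ reduces $W$ to the \emph{scalar} multiplication operator $W=-\frac{i}{\delta}V(\bx)I_n$ (with $n=2$ or $4$). I would then compute the inner commutator $[T,W]$ directly: the constant mass term $-\frac{i\nu}{\delta\eps^2}\beta$ commutes with the scalar $W$, while each first-order term contributes $\bigl[-\frac{1}{\eps}\alpha_j\partial_j,\,W\bigr]=\frac{i}{\delta\eps}(\partial_j V)\alpha_j$, because $[\partial_j,W]$ acts simply as multiplication by $\partial_j W$. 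The decisive point is that the $V\partial_j$ pieces cancel, so $[T,W]=\frac{i}{\delta\eps}\sum_j(\partial_j V)\alpha_j$ is again a \emph{pure multiplication operator}, carrying no $\partial_j$.

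Finally I would form the outer commutator. Since $[T,W]$ now contains no differentiation, both $W$ and $[T,W]$ are ordinary matrix-valued multiplication operators, and $W$ is moreover proportional to the identity; a scalar multiplication operator commutes with every multiplication operator, so $[W,[T,W]]=0$, using Lemma \ref{lemma_commutator}. I do not expect any serious obstacle: the only step requiring care is confirming that the derivative in $T$ leaves no residual $\partial_j$ in $[T,W]$ after the magnetic term is removed, i.e. the cancellation $\alpha_j V\partial_j-V\alpha_j\partial_j=0$ coming from $V$ being scalar. Everything else is immediate.
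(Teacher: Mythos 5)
Your proposal is correct, and it actually contains two proofs. Your first argument is precisely the paper's own proof: the paper establishes Lemma \ref{DTW_nonmag} by mere inspection of Lemmas \ref{DTW1d1}, \ref{DTW2d1}, \ref{DTW3d} and Corollaries \ref{DTW1d2}, \ref{DTW2d2}, noting that every term of the displayed double commutators carries a factor $A_j^2$, $A_iA_j$, or $A_j\partial_kA_l$, so setting $A_1=A_2=A_3\equiv 0$ kills everything. Your second argument is a genuinely different and, in some ways, better route: with $A_j\equiv 0$ the operator $W=-\frac{i}{\delta}V(\bx)I_n$ is a scalar multiplication operator, the mass term commutes with it, and each kinetic term yields $\bigl[-\frac{1}{\eps}\alpha_j\partial_j,\,W\bigr]=\frac{i}{\delta\eps}\bigl(\partial_jV(\bx)\bigr)\alpha_j$, so that $[T,W]=\frac{i}{\delta\eps}\sum_j\bigl(\partial_jV(\bx)\bigr)\alpha_j$ is a pure multiplication operator; since $W$ is a scalar multiple of the identity it commutes with any matrix-valued multiplication operator, giving $[W,[T,W]]=0$ directly (your invocation of Lemma \ref{lemma_commutator} at this last step is harmless but unnecessary). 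What each approach buys: the paper's inspection argument is free once the heavy computations of Appendices A and B have been done, whereas your structural argument is self-contained, uniform in dimension and in the choice of $\sigma_j$ versus $\alpha_j$, and would prove the lemma even if one had never computed the general double commutators with nonzero $\mathbf{A}$ -- it also exposes the conceptual reason for the vanishing, namely that without magnetic coupling $W$ is scalar and first-order commutators of scalar multiplication operators with $T$ lose all derivative terms.
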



\section{A fourth-order compact time-splitting Fourier pseudospectral method}
In this section, we present a fourth-order compact time-splitting Fourier pseudospectral method for the Dirac equation \eqref{nondimuni} (or (\ref{LDirac1d2d})) by using the $S_\text{4c}$ method \eqref{4A} for time integration followed by the Fourier pseudospectral method for spatial
discretization.

\subsection{Time integration by the $S_\text{4c}$ method in 1D}
For simplicity of notations, we present the numerical method  for
(\ref{LDirac1d2d}) in 1D first.
Similar to most works in the literatures for the analysis
and computation of the Dirac equation (cf. \cite{BCJT,BCJT2,BCJY,BL} and references therein),
in practical computation, we truncate the whole space problem onto an interval $\Omega=(a,b)$
with periodic boundary conditions. The truncated interval is large enough such that the truncation error is
negligible. In 1D, the Dirac equation (\ref{LDirac1d2d}) with periodic boundary conditions collapses to
\be
\begin{split}
	\label{LDb1}
	&i\delta\partial_t\Phi=\left(-i\dfrac{\delta}{\eps}\sigma_1\partial_x + \dfrac{\nu}{\eps^2}
	\sigma_3\right)\Phi + \Bigl(V(x)I_2 - A_1(x)\sigma_1\Bigr)\Phi, \quad x\in\Omega, \quad t>0,\\
&\Phi(t,a)=\Phi(t,b), \quad \partial_x\Phi(t,a)=\partial_x\Phi(t,b),\quad
t\ge0;\\
 &\Phi(0,x)=\Phi_0(x), \quad a\le x\le b;
 \end{split}
	\ee
where $\Phi:=\Phi(t,x)$, $\Phi_0(a)=\Phi_0(b)$ and $\Phi_0^\prime(a)=\Phi_0^\prime(b)$.

Choose a time step $\tau>0$, denote $t_n=n\tau$ for $n\ge0$ and
let $\Phi^n(x)$ be an approximation of $\Phi(t_n,x)$.  Re-writing the Dirac equation \eqref{LDb1} as
\begin{equation}
	\label{LDb2}
	\partial_t\Phi=\left(-\dfrac{1}{\eps}\sigma_1\partial_x - \dfrac{i\nu}{\delta\eps^2}
	\sigma_3\right)\Phi -\frac{i}{\delta} \Bigl(V(x)I_2 - A_1(x)\sigma_1\Bigr)\Phi:=(T+W)\Phi,
\end{equation}
then we can apply the $S_\text{4c}$ method \eqref{4A} for time integration
over the time interval $[t_n,t_{n+1}]$ as
\begin{equation}
\label{Dirac4A}
\Phi^{n+1}(x)= S_\text{4c}(\tau)\Phi^n(x) := e^{\frac{1}{6}\tau W}e^{\frac{1}{2}\tau T}e^{\frac{2}{3}\tau
	\widehat{W}}
e^{\frac{1}{2}\tau T}e^{\frac{1}{6}\tau W}\Phi^n(x), \quad
a\le x\le b, \quad n\ge0,
\end{equation}
where the two operators $T$ and $W$ are given in \eqref{TW1d} and
the operator $\widehat{W}$ is given in \eqref{TW1d7}.
In order to calculate $e^{\frac{1}{2}\tau T}$, we can discretize it in space via Fourier
spectral method and then integrate (in phase space or Fourier space)
in time {\bf exactly} \cite{BCJT2,BL}. Since $W$ is diagonalizable \cite{BCJT2},
$e^{\frac{1}{6}\tau W}$ can be evaluated very efficiently \cite{BCJT2}. For $e^{\frac{2}{3}\tau \widehat{W}}$,
plugging \eqref{Pauli} into \eqref{TW1d7}, we can diagonalize it as
\begin{equation} \label{TW1d8}
\widehat{W}=
-\frac{i}{\delta}\Bigl(V(x)I_2 - A_1(x)\sigma_1\Bigr)-
\frac{i\nu\tau^2}{12\delta^3\eps^2}A_1^2(x)\sigma_3
=-iP_2(x)\,\Lambda_2(x)\, P_2(x)^*:=\widehat{W}(x),
\end{equation}
where $\Lambda_2(x)={\rm diag}(\lambda_+^{(2)}(x),\lambda_-^{(2)}(x))$ with
$\lambda_\pm^{(2)}(x)=\frac{V(x)}{\delta}\pm \frac{A_1(x)}{12\delta^3\eps^2}
\sqrt{144\delta^4\eps^4 + \nu^2\tau^4A_1^2(x)}$ and
\begin{equation}
P_2(x)=\frac{1}{\sqrt{2\beta_1(x)}}\left(\ba{ll}
\sqrt{\beta_1(x) + \beta_2(x)} & \sqrt{\beta_1(x) - \beta_2(x)}\\
-\sqrt{\beta_1(x) - \beta_2(x)} & \sqrt{\beta_1(x) + \beta_2(x)} \\
 \ea
 \right), \quad a\le x\le b,
\end{equation}
with
\begin{equation}
\beta_1(x) = \sqrt{144\delta^4\eps^4 + \nu^2\tau^4A_1^2(x)}, \quad
\beta_2(x) = \nu\tau^2A_1(x), \quad a\le x\le b.
\end{equation}
Thus we have
\begin{equation}
e^{\frac{2}{3}\tau\widehat{W}}=e^{-\frac{2i}{3}\tau P_2(x)\,\Lambda_2(x)\, P_2(x)^*}=P_2(x)\,e^{-\frac{2i}{3}\tau\Lambda_2(x)} \, P_2(x)^*, \quad a\le x\le b.
\end{equation}

\subsection{Full discretization in 1D}
Choose a mesh size $h:=\Delta x =\frac{b-a}{M}$ with $M$ being an even positive integer and denote the grid points as
$x_j:=a+jh$, for  $j=0, 1, \ldots ,M$.
Denote $X_M=\{\beU=(U_0,U_1,...,U_M)^T\ |\ U_j\in {\mathbb C}^2, j=0,1,\ldots,M, \ U_0=U_M\}$.
For any $\beU\in X_M$,
we denote its Fourier representation as
\be
U_j=\sum_{l=-M/2}^{M/2-1} \widetilde{U}_l\, e^{i\mu_l(x_j-a)}
=\sum_{l=-M/2}^{M/2-1} \widetilde{U}_l\, e^{2ijl\pi/M}, \qquad j=0,1,\ldots, M,
\end{equation}
where $\mu_l$ and $\widetilde{U}_l\in{\mathbb C}^2$  are defined as
\be\mu_l=\frac{2l\pi}{b-a},\qquad \widetilde{U}_l=\frac{1}{M}\sum_{j=0}^{M-1}U_j\, e^{-2ijl\pi/M}, \qquad
 l=-\frac{M}{2}, \ldots,\frac{M}{2}-1.\label{dftco}
\ee
For $\beU\in X_M$ and $u(x)\in L^2(\Omega)$, their $l^2$-norms are defined as
\begin{eqnarray}
\|\beU\|^2_{l^2}:=h\sum^{M-1}_{j=0}|U_j|^2, \qquad  \|u\|^2_{l^2}:=h\sum^{M-1}_{j=0}|u(x_j)|^2.
\end{eqnarray}
Let $\Phi_{j}^{n}$  be the numerical approximation of $\Phi(t_n,x_j)$ and
denote $\Phi^n=\left(\Phi_0^n, \Phi_1^n, \ldots, \Phi_M^n\right)^T\in X_M$ as
the solution vector at $t=t_n$. Take $\Phi_j^0 = \Phi_0(x_j)$ for $j = 0, \ldots, M$, then  a {\bf fourth-order compact time-splitting Fourier pseudospectral} ($S_\text{4c}$) discretization for the Dirac equation \eqref{LDb1}
is given as
	\be\begin{split}
	\label{S4c}
	\Phi_j^{(1)} &= e^{\frac{\tau}{6}W(x_j)}\Phi_j^{n} = P_{1}\,e^{-\frac{i\tau}{6}\Lambda_{1}(x_j)}P_1^*\,\Phi_j^{n},\\
    \Phi_j^{(2)}&=\sum_{l=-M/2}^{M/2-1} e^{\tau \Gamma_l}\,
	{\left(\widetilde{\Phi^{(1)}}\right)}_l\, e^{i\mu_l(x_j-a)}
	=\sum_{l=-M/2}^{M/2-1}Q_l\, e^{-i\tau D_l}\,Q_l^*\,{\left(\widetilde{\Phi^{(1)}}
		\right)}_l\, e^{2ijl\pi/M},\\
	\Phi_j^{(3)}&=e^{\frac{2\tau}{3}\widehat{W}(x_j)}\Phi_j^{(2)}=P_2
(x_j)\,e^{-\frac{2i\tau}{3}\Lambda_2(x_j)}\,P_2(x_j)^*\,\Phi_j^{(2)},\qquad j = 0,1, \ldots, M,\\
	\Phi_j^{(4)}&=\sum_{l=-M/2}^{M/2-1} e^{\tau\Gamma_l}\,
	{\left(\widetilde{\Phi^{(3)}}\right)}_l\, e^{i\mu_l(x_j-a)}=
	\sum_{l=-M/2}^{M/2-1} Q_l\,e^{-i\tau D_l}\,Q_l^*\,{\left(\widetilde{\Phi^{(3)}}\right)}_l\, e^{2ijl\pi/M},\\
	\Phi_j^{n + 1}& = e^{\frac{\tau}{6}W(x_j)}\Phi_j^{(4)} = P_1\,e^{-\frac{i\tau}{6}\Lambda_1(x_j)}\,P_1^*\,
	\Phi_j^{(4)},
\end{split}
	\ee
	where
	\be
\begin{split}
&W(x_j):=-\frac{i}{\delta} \Bigl(V(x_j)I_2 - A_1(x_j)\sigma_1\Bigr)
=-iP_{1}\,\Lambda_{1}(x_j)\,P_{1}^*,\quad j=0,1,\ldots,M,\\
&\Gamma_l = -\frac{i\mu_l}{\eps}\sigma_1 - \frac{i\nu}{\delta\eps^2}\sigma_3
	= -iQ_l\,D_l\,Q_l^*, \quad l = -\dfrac{M}{2}, \ldots, \dfrac{M}{2} - 1,
\end{split}
	\ee
with $D_l={\rm diag}\left(\frac{1}{\delta \eps^2}\sqrt{\nu^2+\delta^2\eps^2 \mu_l^2}, -\frac{1}{\delta \eps^2}\sqrt{\nu^2+\delta^2\eps^2 \mu_l^2}\right)$,
$\Lambda_1(x)={\rm diag}\left(\lambda_+^{(1)}(x),\lambda_-^{(1)}(x)\right)$ with
$\lambda_\pm^{(1)}(x)=\frac{1}{\delta}\big(V(x)\pm A_1(x)\big)$,
$\eta_l = \sqrt{\nu^2 + \delta^2\eps^2\mu_l^2}$, and
\begin{equation}
P_1=\left(\ba{cc}
\frac{1}{\sqrt{2}} & \frac{1}{\sqrt{2}} \\
-\frac{1}{\sqrt{2}} & \frac{1}{\sqrt{2}}\\
 \ea
 \right), \quad
Q_l=\frac{1}{\sqrt{2\eta_l(\eta_l + \nu)}}\left(\ba{ll}
\eta_l + \nu & -\delta\eps\mu_l\\
\delta\eps\mu_l & \eta_l + \nu\\
\ea \right), \quad l = -\dfrac{M}{2}, \ldots, \dfrac{M}{2} - 1.
\end{equation}

 We remark here that full discretization by other time-splitting methods
together with Fourier pseudospectral method for spatial discretization
can be implemented similarly \cite{BCJT2} and the details are omitted here for
brevity.

	\subsection{Mass conservation in 1D}
The $S_\text{4c}$ method \eqref{S4c} is explicit, its memory cost is $O(M)$ and its computational cost
per time step is $O(M\, \ln M)$, it is fourth-order accurate in time and
spectral accurate in space. In addition, it conserves the total probability
in the discretized level, as shown in the following lemma.
	
	\begin{lemma}
For any $\tau>0$, the $S_\text{4c}$ method (\ref{S4c}) conserves the mass in the discretized level, i.e.
		\begin{equation} \label{nclema1}
		\Big\|\Phi^{n+1}\Big\|^2_{l^2} := h\sum_{j = 0}^{M - 1}\Big|\Phi_j^{n+1}\Big|^2 \equiv h\sum_{j = 0}^{M -
			1}\Big|\Phi_j^0\Big|^2 = h\sum_{j = 0}^{M - 1}\Big|\Phi_0(x_j)\Big|^2=\Big\|\Phi_0\Big\|^2_{l^2},\quad n\ge0.
		\end{equation}
	\end{lemma}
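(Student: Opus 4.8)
The plan is to show that each of the five sub-steps defining the one-step map $\Phi^n\mapsto\Phi^{n+1}$ in \eqref{S4c} is an isometry of $X_M$ for the discrete norm $\|\cdot\|_{l^2}$, so that their composition preserves $\|\cdot\|_{l^2}$; then \eqref{nclema1} follows by induction on $n$ using the initialization $\Phi_j^0=\Phi_0(x_j)$. The sub-steps come in two flavours --- three ``physical-space'' multiplications (producing $\Phi^{(1)}$, $\Phi^{(3)}$ and $\Phi^{n+1}$) and two ``Fourier-space'' multiplications (producing $\Phi^{(2)}$ and $\Phi^{(4)}$) --- and I would treat each flavour separately.

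For the physical-space sub-steps, I would first record that the generators $W(x_j)$ and $\widehat{W}(x_j)$ are skew-Hermitian. Indeed, since $V$ and $A_1$ are real-valued and $\sigma_1,\sigma_3$ are Hermitian, the matrices $V(x_j)I_2-A_1(x_j)\sigma_1$ and $\frac{\nu\tau^2}{12\delta^3\eps^2}A_1^2(x_j)\sigma_3$ are Hermitian, so $W(x_j)$ and $\widehat{W}(x_j)$ each equal $-i$ times a Hermitian matrix. Exponentiating, $e^{\frac{\tau}{6}W(x_j)}$ and $e^{\frac{2\tau}{3}\widehat{W}(x_j)}$ are unitary $2\times2$ matrices for every $j$ (equivalently, in the diagonalized form of \eqref{S4c} one uses that $P_1$ and $P_2(x_j)$ are unitary and that $\Lambda_1,\Lambda_2$ are real, so $e^{-i\tau\Lambda_1/6}$ and $e^{-2i\tau\Lambda_2/3}$ are unitary). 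If a sub-step sends $U_j\mapsto R_jU_j$ with each $R_j$ unitary, then $|R_jU_j|=|U_j|$ for the Euclidean norm on $\mathbb{C}^2$, hence $h\sum_{j=0}^{M-1}|R_jU_j|^2=h\sum_{j=0}^{M-1}|U_j|^2$; so the three physical-space sub-steps preserve $\|\cdot\|_{l^2}$.

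For the Fourier-space sub-steps I would invoke the discrete Parseval identity attached to the transform in \eqref{dftco}, namely $\sum_{j=0}^{M-1}|U_j|^2 = M\sum_{l=-M/2}^{M/2-1}|\widetilde{U}_l|^2$, which follows from the orthogonality of the discrete Fourier modes $\{e^{2ijl\pi/M}\}$ and holds componentwise for $\mathbb{C}^2$-valued data. The sub-steps producing $\Phi^{(2)}$ and $\Phi^{(4)}$ act diagonally in Fourier space through $\widetilde{\Phi^{(2)}}_l=e^{\tau\Gamma_l}\widetilde{\Phi^{(1)}}_l$ and the analogous formula for $\Phi^{(4)}$. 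Since $\Gamma_l=-\frac{i\mu_l}{\eps}\sigma_1-\frac{i\nu}{\delta\eps^2}\sigma_3$ is again $-i$ times a Hermitian matrix, $e^{\tau\Gamma_l}$ is unitary, so $|\widetilde{\Phi^{(2)}}_l|=|\widetilde{\Phi^{(1)}}_l|$ for each mode $l$; summing over $l$ and applying Parseval yields $\|\Phi^{(2)}\|_{l^2}=\|\Phi^{(1)}\|_{l^2}$ and likewise $\|\Phi^{(4)}\|_{l^2}=\|\Phi^{(3)}\|_{l^2}$.

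Chaining the five norm-preserving identities gives $\|\Phi^{n+1}\|_{l^2}=\|\Phi^n\|_{l^2}$ for every $n\ge0$, and induction delivers \eqref{nclema1}. I expect no serious obstacle: the only structural facts needed are the skew-Hermiticity of $W$, $\widehat{W}$ and $\Gamma_l$ (immediate from $V,A_1$ real and the Pauli matrices Hermitian) and the discrete Parseval identity (routine orthogonality). The one point deserving care is that the Fourier-space unitarity must be read at the level of the full two-component vectors $\widetilde{\Phi}_l\in\mathbb{C}^2$ rather than scalar modes, so that Parseval and the mode-wise unitarity of $e^{\tau\Gamma_l}$ combine correctly.
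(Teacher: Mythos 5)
Your proposal is correct and follows essentially the same route as the paper's own proof: both rest on the skew-Hermiticity of $W(x_j)$, $\widehat{W}(x_j)$ and $\Gamma_l$ (hence unitarity of their exponentials) to show each physical-space sub-step preserves $\|\cdot\|_{l^2}$, invoke the discrete Parseval identity together with mode-wise unitarity of $e^{\tau\Gamma_l}$ for the two Fourier-space sub-steps, and conclude by chaining the five identities and using induction on $n$. Your write-up is merely slightly more explicit than the paper's (which states the $\widehat{W}$ and $\Phi^{(1)}$ steps as ``similarly''), but the argument is identical.
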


    \begin{proof}
 Noticing
$W(x_j)^*=-W(x_j)$ and thus $\left(e^{\frac{\tau}{6}W(x_j)}\right)^*\,e^{\frac{\tau}{6}W(x_j)}=I_2$,
from (\ref{S4c}) and summing for $j=0,1,\ldots,M-1$, we get
    	\begin{eqnarray}\label{nc41}
    \Big\|\Phi^{n+1}\Big\|^2_{l^2} &=& h\sum_{j = 0}^{M - 1}\Big|\Phi_j^{n+1}\Big|^2=h\sum_{j = 0}^{M - 1}\left|e^{\frac{\tau}{6}W(x_j)}\Phi_j^{(4)}\right|^2=
    h\sum_{j = 0}^{M - 1}(\Phi_j^{(4)})^*\,\left(e^{\frac{\tau}{6}W(x_j)}\right)^*\,
    e^{\frac{\tau}{6}W(x_j)}\,\Phi_j^{(4)} \nonumber\\
    &=&h\sum_{j = 0}^{M - 1}(\Phi_j^{(4)})^*\,I_2\,\Phi_j^{(4)}=h\sum_{j = 0}^{M - 1}\Big|\Phi_j^{(4)}\Big|^2=\Big\|\Phi^{(4)}\Big\|^2_{l^2}, \qquad n\ge0.
    	\end{eqnarray}
Similarly, we have
\be\label{nc4a2}
\Big\|\Phi^{(3)}\Big\|^2_{l^2} =\Big\|\Phi^{(2)}\Big\|^2_{l^2},\qquad
\Big\|\Phi^{(1)}\Big\|^2_{l^2} =\Big\|\Phi^{n}\Big\|^2_{l^2}, \qquad n\ge0.
\ee
Similarly, using the Parsval's identity and noticing $\Gamma_l^*=-\Gamma_l$
and thus $\left(e^{\tau \Gamma_l}\right)^*e^{\tau \Gamma_l}=I_2$,
we get
\be\label{nc4a3}
\Big\|\Phi^{(4)}\Big\|^2_{l^2} =\Big\|\Phi^{(3)}\Big\|^2_{l^2},\qquad
\Big\|\Phi^{(2)}\Big\|^2_{l^2} =\Big\|\Phi^{(1)}\Big\|^2_{l^2}.
\ee
Combining \eqref{nc41}, \eqref{nc4a2} and \eqref{nc4a3}, we obtain
\be
\Big\|\Phi^{n+1}\Big\|^2_{l^2}=\Big\|\Phi^{(4)}\Big\|^2_{l^2}
=\Big\|\Phi^{(3)}\Big\|^2_{l^2}=\Big\|\Phi^{(2)}\Big\|^2_{l^2}
=\Big\|\Phi^{(1)}\Big\|^2_{l^2}=\Big\|\Phi^{n}\Big\|^2_{l^2}, \quad n\ge0.
\ee
Using the mathematical induction, we get the mass conservation \eqref{nclema1}.
\end{proof}

\subsection{Discussion on extension to 2D and 3D}

When there is no magnetic potential, i.e., when $A_1(\bx) = A_2(\bx) = A_3(\bx) \equiv 0$ in the Dirac equation \eqref{LDirac1d2d} in 2D and \eqref{nondimuni} in 2D and 3D, from Lemma \ref{DTW_nonmag}, we know that
the double commutator $[W, [T, W]] = 0$.
 In this case, noting \eqref{WHW1}, we  have
\begin{equation}
\widehat{W} = W + \frac{1}{48}\tau^2[W, [T, W]] = W.
\end{equation}
Then the $S_\text{4c}$ method \eqref{4A} collapses to
\begin{equation}
\label{4Am1}
u(\tau, \bx) \approx S_\text{4c}(\tau)u_0(\bx) := e^{\frac{1}{6}\tau W}e^{\frac{1}{2}\tau T}e^{\frac{2}{3}\tau W}
e^{\frac{1}{2}\tau T}e^{\frac{1}{6}\tau W}u_0(\bx).
\end{equation}
Applying the $S_\text{4c}$ method \eqref{4Am1} to integrate
the Dirac equation \eqref{LDirac1d2d} in 2D
over the time interval $[t_n,t_{n+1}]$ with $\Phi(t_n,\bx)=\Phi^n(\bx)$ given, we obtain
\begin{equation}\label{D2dnm}
\Phi^{n+1}(\bx)= S_\text{4c}(\tau)\Phi^n(\bx)
= e^{\frac{1}{6}\tau W}e^{\frac{1}{2}\tau T}e^{\frac{2}{3}\tau W}e^{\frac{1}{2}\tau T}e^{\frac{1}{6}\tau W}\Phi^n(\bx), \quad
\bx\in\Omega, \quad n\ge0,
\end{equation}
where $T$ and $W$ are given in \eqref{TW2d1}.
Similarly, applying the $S_\text{4c}$ method \eqref{4Am1} to integrate
the Dirac equation \eqref{nondimuni}  in 2D and 3D
over the time interval $[t_n,t_{n+1}]$ with $\Psi(t_n,\bx)=\Psi^n(\bx)$ given, we obtain
\begin{equation} \label{D3dnm}
\Psi^{n+1}(\bx)= S_\text{4c}(\tau)\Psi^n(\bx)
= e^{\frac{1}{6}\tau W}e^{\frac{1}{2}\tau T}e^{\frac{2}{3}\tau W}e^{\frac{1}{2}\tau T}e^{\frac{1}{6}\tau W}\Psi^n(\bx), \quad
\bx\in\Omega, \quad n\ge0,
\end{equation}
where $T$ and $W$ are given in \eqref{TW2d2} and \eqref{TW3d1} for
2D and 3D, respectively. In practical computation,
the operators $e^{\frac{1}{6}\tau W}$ and $e^{\frac{2}{3}\tau W}$
in \eqref{D2dnm} and \eqref{D3dnm} can be evaluated in physical space
directly and easily \cite{BCJT2}. For the operator
$e^{\frac{1}{2}\tau T}$,  it can be discretized  in space via Fourier
spectral method and then integrate (in phase space or Fourier space)
in time {\bf exactly}. For details, we refer to \cite{BCJT2,BL} and
references therein. In fact, the implementation of the $S_\text{4c}$ method
in this case is much simpler than that of the $S_\text{4}$ and $S_\text{4RK}$
methods.

  Of course, when the magnetic potential is nonzero in the Dirac equation \eqref{LDirac1d2d} in 2D and \eqref{nondimuni} in 2D and 3D, one has to adapt the formulation \eqref{4Am1} for $S_\text{4c}$ method. In this case,
  the main difficulty is how to efficiently and accurately evaluate the operator
  $e^{\frac{2}{3}\tau \hat{W}}$. This can be done by using the method of characteristics and the nonuniform fast Fourier transform (NUFFT), which
  has been developed for the magnetic Schr\"{o}dinger equation.
  For details, we refer to \cite{COP,JGB} and references therein.
  Of course, it is a little more tedious in practical implementation
  for $S_\text{4c}$ method than that for the $S_\text{4}$  and $S_\text{4RK}$ methods in this situation.


\section{Comparision of different time-splitting methods}\label{sec5}
\setcounter{equation}{0}
\setcounter{table}{0}
\setcounter{figure}{0}

In this section, we compare  the
fourth-order compact time-splitting Fourier pseudospectral
$S_\text{4c}$ method \eqref{S4c} with other time-splitting
methods including the first-order time-splitting ($S_1$) method,
the second-order time-splitting ($S_2$) method,
the fourth-order time-splitting ($S_4$) method and the fourth-order
partitioned Runge-Kutta time-splitting ($S_\text{4RK}$) method in terms of accuracy and efficiency as well as long time behavior.

\subsection{An example in 1D}

For simplicity, we first consider an example in 1D.
In the  Dirac equation (\ref{LDirac1d2d}), we take $d=1$, $\eps = \delta = \nu = 1$ and
\begin{equation}
\label{potentials}
V(x) = \dfrac{1 - x}{1 + x^2}, \quad A_1(x) = \dfrac{(x + 1)^2}{1 + x^2}, \quad x\in{\mathbb R}.
\end{equation}
The initial data in \eqref{initial11} is taken as:
\begin{equation}
\label{initial2}
\phi_1(0, x) = e^{-x^2 / 2}, \quad \phi_2(0, x) = e^{-(x - 1)^2 / 2}, \quad x\in{\mathbb R}.
\end{equation}
The problem is solved numerically on a bounded domain
$\Omega = (-32, 32)$, i.e. $a=-32$ and $b=32$.

Due to the fact that the exact solution is not available,
we obtain a numerical `exact' solution
by using the $S_\text{4c}$ method with a
fine mesh size $h_e = \frac{1}{16}$ and a
small time step $\tau_e = 10^{-5}$.
Let $\Phi^n$ be the numerical solution obtained
by a numerical method with mesh size $h$ and time step $\tau$.
Then the error is quantified as
\begin{equation}
\label{err}
e_\Phi(t_n) = {\|\Phi^n - \Phi(t_n, \cdot)\|_{l^2}}
={\sqrt{h\sum_{j = 0}^{M - 1}|\Phi(t_n, x_j) - \Phi_j^n|^2}}.
\end{equation}

In order to compare the spatial errors, we take time step $\tau=\tau_e =  10^{-5}$
such that the temporal discretization error could be negligible.
Table \ref{spatial_error} lists numerical errors $e_\Phi(t=6)$
for different time-splitting methods under different mesh size $h$.
We remark here that, for the $S_1$ method, in order to observe the spatial error when the mesh size $h=h_0/2^3$, one has to choose time step
$\tau\le 10^{-10}$ which is too small and thus the error is not shown in the table for this case.
From Table \ref{spatial_error}, we could see that all the numerical methods
are spectral order accurate in space (cf. each row in Table \ref{spatial_error}).

\begin{table}[t!]\renewcommand{\arraystretch}{1.2}
	\centering
	\begin{tabular}{|c|cccc|}
		\hline
		&  $h_0=1$ & $h_0 / 2$ & $h_0 / 2^2$ & $h_0 / 2^3$  \\
		\hline
		$S_1$  & 1.01 & 5.16E-2 & 7.07E-5 & -- \\
		\hline
		$S_2$  & 1.01 & 5.16E-2 & 6.96E-5 & 1.92E-10\\
		\hline
		$S_4$  & 1.01 & 5.16E-2 & 6.96E-5 &3.52E-10\\
		\hline
		$S_\text{4c}$  & 1.01 & 5.16E-2 & 6.96E-5 & 3.06E-10\\
		\hline
		$S_\text{4RK}$  & 1.01 & 5.16E-2 & 6.96E-5 & 5.15E-10\\
		\hline
	\end{tabular}
\caption{Spatial errors $e_\Phi(t=6)$ of different time-splitting methods under different mesh size $h$ for the  Dirac equation (\ref{LDirac1d2d}) in 1D. }\label{spatial_error}
\end{table}

In order to compare the temporal errors, we take mesh size  $h=h_e = \frac{1}{16}$
such that the spatial discretization error could be negligible.
Table \ref{Table 1} lists numerical errors $e_\Phi(t=6)$
for different time-splitting methods under different time step $\tau$. In the table, we use second (s) as the unit for CPU time.
For comparison, Figure \ref{cmp} plots $e_\Phi(t=6)$
and  $e_\Phi(t=6)/\tau^\alpha$ with $\alpha$
taken as the order of accuracy of a certain numerical method (in order to
show the constants $C_1$ in \eqref{err_S1}, $C_2$ in \eqref{err_S2}, $C_4$ in \eqref{err_S4},
$\widetilde{C}_4$ in \eqref{err_4RK} and $\widehat{C}_4$ in \eqref{err_4c}) for
different time-splitting methods under different time step $\tau$.

 \begin{table}[t!]\renewcommand{\arraystretch}{1.6}
	\small
	\centering
	\begin{tabular}{|c|c|ccccccc|}
		\hline
		\multicolumn{2}{|c|}{}&  $\tau_0 = 1 / 2$ & $\tau_0 / 2$ & $\tau_0 / 2^2$ &
		$\tau_0 / 2^3$ & $\tau_0 / 2^4$ & $\tau_0 / 2^5$  & $\tau_0 / 2^6$ \\
		\hline
		\multirow{3}{*}{$S_1$}&  $e_\Phi(t=6)$ &  1.17 &	4.71E-1& 2.09E-1 &	9.90E-2 &	4.82E-2 &	2.38E-2 &	1.18E-2 \\
		&rate  & --	 & 	1.31 &	1.17 &	1.08 &	1.04 &	1.02 &	1.01 \\
		&CPU Time  &  0.02 &	0.05 &	0.11 &	0.16 &	0.37 &	0.62 &	\textbf{1.31} \\
		\hline
		\multirow{3}{*}{$S_2$} & $e_\Phi(t=6)$  &  	7.49E-1 &	1.87E-1 &	4.66E-2 &	1.16E-2
		&	2.91E-3 &	7.27E-4	 & 1.82E-4 \\
		& rate  & -- & 	2.00 &	2.00 &	2.00 &	2.00 &	2.00 &	2.00 \\
		& CPU Time  &   0.04 & 0.06 &	0.11 & 0.21 & 0.37 & 0.75 &	\textbf{1.42} \\
		\hline
		\multirow{3}{*}{$S_4$}&$e_\Phi(t=6)$  &   3.30E-1&	3.73E-2 &	3.05E-3 &	2.07E-4 &
		1.32E-5	& 8.29E-7 & 5.20E-8 \\
		&rate  & -- &	3.15 & 	3.61 &	3.89 &	3.97 &	3.99 &	4.00 \\
		&CPU Time  & 	0.10 &	0.16 &	0.38 &	0.58 &	1.09 &	2.23 &	\textbf{4.41} \\
		\hline
		\multirow{3}{*}{$S_\text{4c}$} & $e_\Phi(t=6)$  &  1.66E-2 &	9.54E-4 &	5.90E-5 &	3.68E-6
		&	2.30E-7 &	1.43E-8 &	8.12E-10 \\
		& rate  & -- & 	4.12 &	4.01 &	4.00 &	4.00 &	4.01 &	4.13 \\
		& CPU Time  &  	0.06 &	0.09 &	0.18 &	0.35 &	0.68 &	1.36 &	\textbf{2.68} \\
		\hline
		\multirow{3}{*}{$S_\text{4RK}$} & $e_\Phi(t=6)$  &   2.87E-3 & 1.78E-4 & 1.11E-5 & 6.97E-7
		& 4.34E-8 & 2.58E-9 & 1.66E-10 \\
		& rate  & -- &  4.01 & 3.99 &	4.00 & 4.00 & 4.07 & 3.96 \\
		& CPU Time  &  0.15 & 0.28 & 0.57 & 1.24 & 2.66 & 3.94 &	\textbf{7.79} \\
	    \hline
	\end{tabular}
\caption{Temporal errors $e_\Phi(t=6)$ of different time-splitting methods under different time step $\tau$ for the  Dirac equation (\ref{LDirac1d2d}) in 1D. Here we also list convergence rates and computational time  (CPU time in seconds) for comparison.}
	\label{Table 1}
\end{table}

\begin{figure}[h!]
\centerline{\psfig{figure=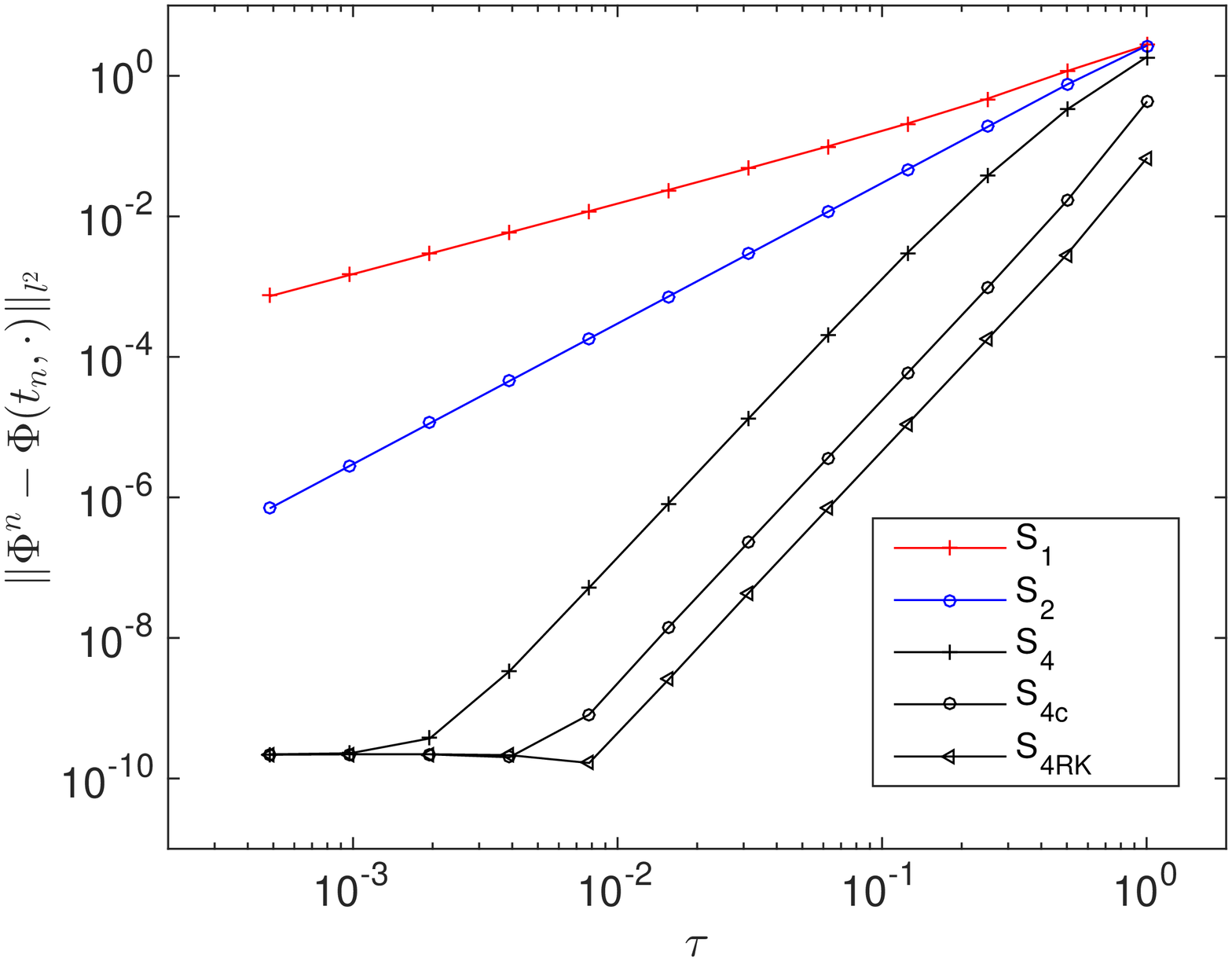,height=6.5cm,width=6.5cm}
\psfig{figure=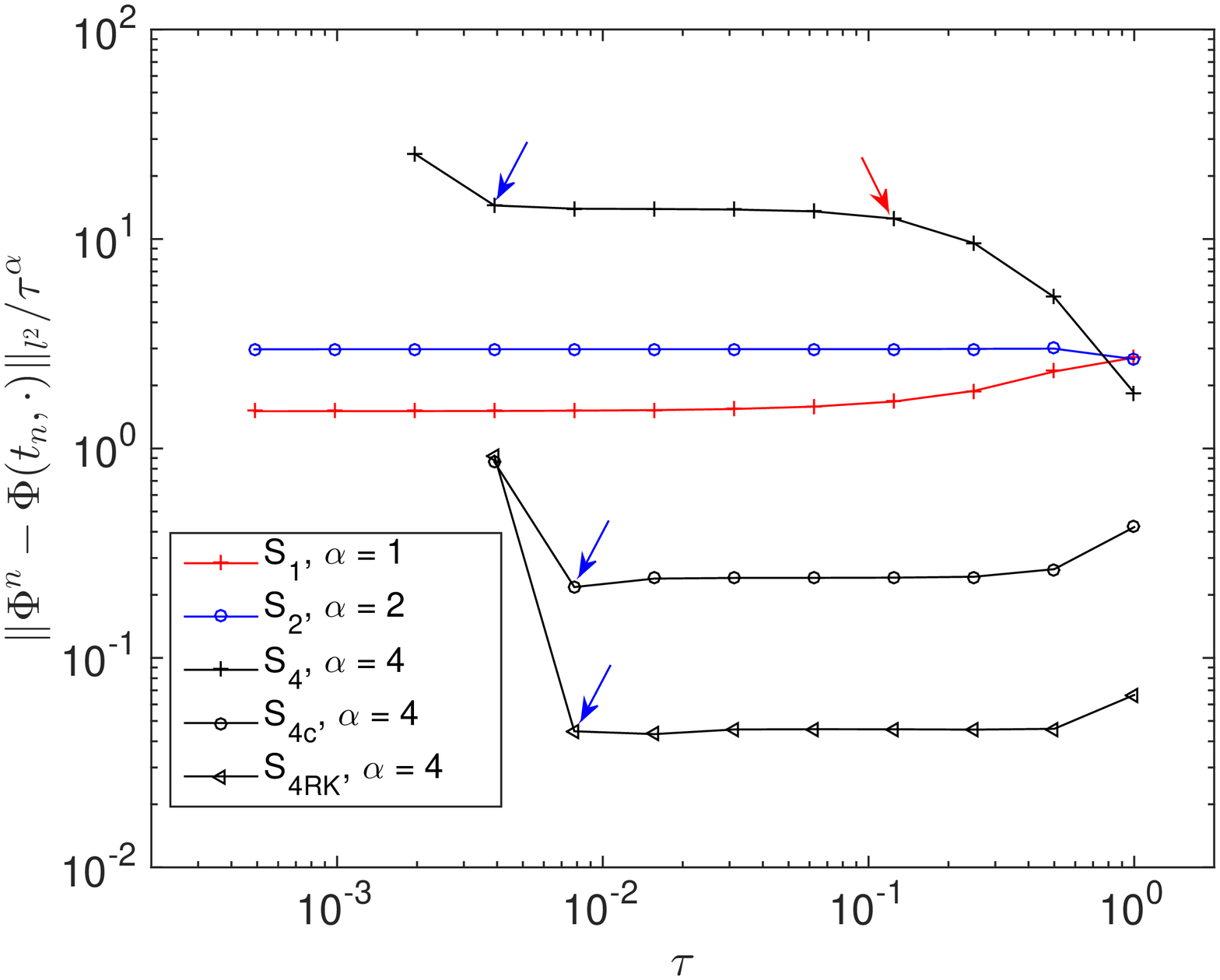,height=6.5cm,width=6.5cm}}
	\caption{Temporal errors $e_\Phi(t=6)$ (left) and $e_\Phi(t=6)/\tau^\alpha$ with $\alpha$
taken as the order of accuracy of a certain numerical method (right)
of different time-splitting methods under different time step $\tau$
for the  Dirac equation (\ref{LDirac1d2d}) in 1D.}
	\label{cmp}
\end{figure}

\begin{figure}[htbp]
\centerline{\psfig{figure=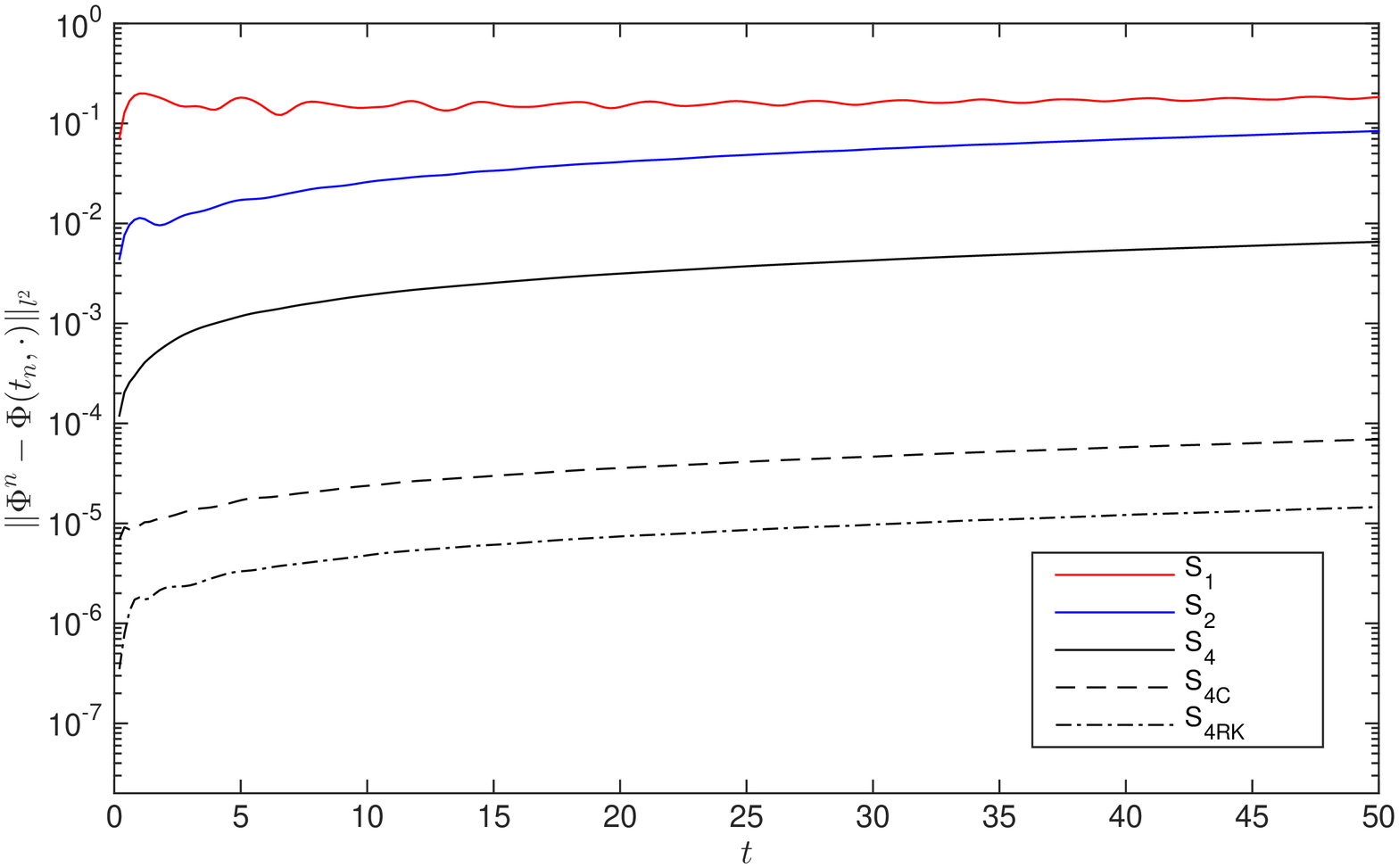,height=7cm,width=12.5cm}}
	\caption{Time evolution of the errors $e_\Phi(t)$ under
$h = \frac{1}{16}$ and $\tau = 0.1$ over long time of
 different time-splitting methods for the  Dirac equation (\ref{LDirac1d2d}) in 1D.}
	\label{longtime}
\end{figure}

From Table \ref{Table 1} and Figure \ref{cmp}, we can draw the following conclusions: (i) The $S_1$ method is first-order in time,
the $S_2$ method is second-order in time, and
the $S_4$, $S_\text{4c}$ and $S_\text{4RK}$ methods are all fourth-order in time (cf. Table \ref{Table 1} and Figure \ref{cmp} left).
(ii) For any fixed mesh $h$ and time $\tau$, the computational time for $S_1$ and $S_2$ are
quite similar, the computational time of $S_\text{4c}$, $S_4$ and $S_\text{4RK}$ are about two times, three times and six times of the $S_2$ method, respectively (cf. Table \ref{Table 1}). (iii) Among the three fourth-order time-splitting methods, $S_\text{4c}$ and $S_\text{4RK}$ are quite similar in terms
of numerical errors for any fixed $\tau$ and they are much smaller than
that of the $S_4$ method, especially when the $\tau$ is not so small (cf. Table \ref{Table 1} and Figure \ref{cmp} left).
(iv) For the constants in front of the convergence rates of different methods, $C_4\gg C_1\sim C_2 \gg \widehat{C_4}\sim \widetilde{C}_4$ (cf.
Figure \ref{cmp} right). (v) For the $S_4$ method, it suffers from convergence rate reduction when the time step is not small and a very large
constant in front of the convergence rate. Thus this method is, in general,
to be avoided in practical computation, which has been observed when it is
applied for the nonlinear Schr\"{o}dinger equation too \cite{Tha}.

To compare the long time behavior of different time-splitting methods,
Figure \ref{longtime} depicts $e_\Phi(t)$ under
mesh size $h = \frac{1}{16}$ and time step $\tau = 0.1$ for
$0\le t\le T:=50$.

From Figure \ref{longtime}, we can observe: (i) The
errors increase very fast when $t$ is small, e.g. $0\le t\le O(1)$,
and they almost don't change when $t\gg1$, thus they are suitable for long time simulation, especially the fourth-order methods. (ii)
When $t$ is not large, the error of the $S_4$ method
is about 10 times bigger than that of the $S_\text{4c}$  method;
however, when $t\gg1$, it becomes about $100$ times larger.
(iii) The error of the $S_\text{4RK}$ method is always the smallest
among all the time-splitting methods.

\bigskip

Based on the efficiency and accuracy as well as long time behavior, in conclusion, for the
three fourth-order time-splitting methods, $S_\text{4c}$ is more accurate than $S_4$ and it is more efficient than $S_\text{4RK}$. Thus the $S_\text{4c}$ method is highly recommended for studying the dynamics of the Dirac equation, especially in 1D.

\subsection{An example in 2D}
For simplicity, here we only compare the three fourth-order integrators,
i.e., $S_\text{4c}$, $S_4$ and $S_\text{4RK}$ via an example in 2D.
In order to do so,
in the Dirac equation (\ref{LDirac1d2d}), we take $d=2$, $\eps = \delta = \nu = 1$ and take the potential in honey-comb form
\be\begin{split}
\label{potential_2D}
V(\mathbf{x}) &= \cos\left(\frac{4\pi}{\sqrt{3}}\mathbf{e_1}\cdot\mathbf{x}\right) +
\cos\left(\frac{4\pi}{\sqrt{3}}\mathbf{e}_2\cdot\mathbf{x}\right) + \cos\left(\frac{4\pi}{\sqrt{3}}\mathbf{e}_3
\cdot\mathbf{x}\right),\\
 A_1(\mathbf{x}) &= A_2(\mathbf{x}) = 0,\qquad  \bx\in{\mathbb R}^2,
\end{split}
\ee
with
\begin{equation}
\mathbf{e}_1 = (-1, 0)^T, \quad \mathbf{e}_2 = (1 / 2, \sqrt{3} / 2)^T, \quad \mathbf{e}_3 = (1 / 2, -\sqrt{3} / 2)^T.
\end{equation}
The initial data in \eqref{initial11} is taken as:
\begin{equation}
\label{initial3}
\phi_1(0, \bx) =  e^{-\frac{x^2 + y^2}{2}}, \quad \phi_2(0, \bx) = e^{-\frac{(x - 1)^2 + y^2}{2}}, \qquad \bx=(x, y)^T\in\mathbb{R}^2.
\end{equation}
The problem is solved numerically on a bounded domain
$\Omega = (-10, 10)\times (-10, 10)$.

Similar to the 1D example,
we obtain a numerical `exact' solution
by using the $S_\text{4c}$ method with a
fine mesh size $h_e = \frac{1}{32}$ and a
small time step $\tau_e = 10^{-4}$.
The error for the numerical solution $\Phi^n$ with mesh size $h$ and time step $\tau$ is quantified as
\begin{equation}
\label{err2D}
e_\Phi(t_n) = {\|\Phi^n - \Phi(t_n, \cdot)\|_{l^2}}
={h\sqrt{\sum_{j = 0}^{M - 1}\sum_{l = 0}^{M - 1}|\Phi(t_n, x_j,y_l) - \Phi_{jl}^n|^2}}.
\end{equation}

Similar to the 1D case, in order to compare the spatial errors, we take time step $\tau=\tau_e =  10^{-4}$
such that the temporal discretization error could be negligible.
Table \ref{spatial_2D-error} lists numerical errors $e_\Phi(t=2)$
for different time-splitting methods under different mesh size $h$.
In order to compare the temporal errors, we take mesh size  $h=h_e = \frac{1}{32}$
such that the spatial discretization error could be negligible.
Table \ref{Table 2} lists numerical errors $e_\Phi(t=2)$
for different time-splitting methods under different time step $\tau$.

\begin{table}[t!]\renewcommand{\arraystretch}{1.2}
	\centering
	\begin{tabular}{|c|cccc|}
		\hline
		&  $h_0=1 / 2$ & $h_0 / 2$ & $h_0 / 2^2$ & $h_0 / 2^3$  \\
		\hline
		$S_4$  & 1.10 & 1.01E-1& 3.83E-4 &7.33E-10\\
		\hline
		$S_\text{4c}$  & 1.10 & 1.01E-1 & 3.83E-4 & 7.33E-10\\
		\hline
		$S_\text{4RK}$  & 1.10 & 1.01E-1 & 3.83E-4 & 7.34E-10\\
		\hline
	\end{tabular}
	\caption{Spatial errors $e_\Phi(t=2)$ of different time-splitting methods under different mesh size $h$ for the  Dirac equation (\ref{LDirac1d2d}) in 2D. }\label{spatial_2D-error}
\end{table}

\begin{table}[t!]\renewcommand{\arraystretch}{1.7}
	\centering
	\small
	\begin{tabular}{|c|c|ccccccc|}
		\hline
		\multicolumn{2}{|c|}{}& $\tau_0 = 1 / 2$ & $\tau_0 / 2$ & $\tau_0 / 2^2$ &
		$\tau_0 / 2^3$ & $\tau_0 / 2^4$ & $\tau_0 / 2^5$  & $\tau_0 / 2^6$ \\
		\hline
		\multirow{3}{*}{{$S_4$}}& Error  & 	4.33E-1 &	2.57E-2 &	3.53E-3 &	2.83E-4 &	1.88E-5 &	1.20E-6 &	7.51E-8 \\
		&Order  & -- &	4.07 &	2.87 &	3.64 &	3.91 &	3.98 &	3.99 \\
		&CPU Time  &  0.20 &	0.26 &	0.45 &	1.04 &	1.63 &	3.37 &	\textbf{6.54} \\
		\hline
		\multirow{3}{*}{{$S_{4c}$}} & Error  & 	6.75E-2 &	3.18E-3 &	7.91E-5 &	4.70E-6 &	2.91E-7 &	1.81E-8 &	1.13E-9 \\
		& Order  & --  &	4.41 &	5.33 &	4.07 &	4.01 &	4.00 &	4.00 \\
		& CPU Time  &  0.12 &	0.28 &	0.31 &	0.55 &	1.11 &	2.09 &	\textbf{4.14} \\
		\hline
		\multirow{3}{*}{{$S_\text{4RK}$}} & Error  &  8.32E-3 &	3.56E-4 &	7.42E-6 &	4.43E-7 &	2.75E-8 &	1.71E-9 &	1.07E-10 \\
		& Order  & -- &	4.55 &	5.59 &	4.07 &	4.01 &	4.00 &	4.00 \\
		& CPU Time  &  	0.26 &	0.43 &	0.87 &	1.52 &	2.92 &	6.20 &	\textbf{11.74} \\
		\hline
	\end{tabular}
\caption{Temporal errors $e_\Phi(t=2)$ of different fourth order time-splitting methods under different time step $\tau$ for the  Dirac equation (\ref{LDirac1d2d}) in 2D. Here we also list convergence rates and computational time (CPU time in seconds) for comparison.}
\label{Table 2}
\end{table}

From Tables \ref{spatial_2D-error}\&\ref{Table 2}, we can draw the following conclusions:
(i) All the three methods are spectrally accurate in space and fourth-order in time. (ii) For any fixed mesh size $h$ and time step $\tau$, the computational times of the $S_4$ and $S_\text{4RK}$ methods are approximately 1.5 times and 3 times more than that of the $S_\text{4c}$ method, respectively.
(iii) $S_\text{4c}$ and $S_\text{4RK}$ are quite similar in terms
of numerical errors for any fixed $\tau$ and the errors are much smaller than
that of the $S_4$ method, especially when $\tau$ is not so small.
(iv) Again, order reduction in time was observed in the $S_4$ method when
$\tau$ is not small, however, there is almost no order reduction
in time for the  $S_\text{4c}$ and $S_\text{4RK}$ methods.

\bigskip

Again, based on the efficiency and accuracy for the Dirac equation in high dimensions, in conclusion, for the
three fourth-order time-splitting methods, $S_\text{4c}$ is more accurate than $S_4$ and it is more efficient than $S_\text{4RK}$. Thus the $S_\text{4c}$ method is highly recommended for studying the dynamics of the Dirac equation in high dimensions, especially without magnetic potential.

\section{Spatial/temporal resolution of the $S_\text{4c}$ method in different parameter regimes} \label{sec6}
\setcounter{equation}{0}
\setcounter{table}{0}
\setcounter{figure}{0}

In this section, we study numerically temproal/spatial resolution
of the fourth-order compact time-splitting Fourier pseudospectral
$S_\text{4c}$ method \eqref{S4c} for the Dirac equation in different parameter
regimes. We take $d=1$ and the electromagnetic potentials as
\eqref{potentials} in Dirac equation (\ref{LDirac1d2d}).
To quantify the numerical error, we adapt the relative errors of the wave
function $\Phi$, the total probability density $\rho$ and the current
${\bf J}$ as
\begin{equation}
e_{\Phi}^r(t_n) = \frac{\|\Phi^n - \Phi(t_n, \cdot)\|_{l^2}}{\|\Phi(t_n, \cdot)\|_{l^2}}, \quad e_{\rho}^r(t_n) = \frac{\|\rho^n - \rho(t_n, \cdot)\|_{l^2}}{\|\rho(t_n, \cdot)\|_{l^2}}, \quad
e_{{\bf J}}^r(t_n) = \frac{\|{\bf J}^n - {\bf J}(t_n, \cdot)\|_{l^2}}{\|{\bf J}(t_n, \cdot)\|_{l^2}},
\end{equation}
where $\rho^n$ and ${\bf J}^n$ are obtained from the wave function $\Phi^n$
via \eqref{density11} and \eqref{current11} with $d=1$, respectively.
Again, the numerical `exact' solution is obtained by using the $S_\text{4c}$
method with a very fine mesh $h=h_e$ and a very small time step $\tau=\tau_e$.

\subsection{In the nonrelativistic limit regime}
Here we take  $\delta = \nu = 1$, $\eps\in(0,1]$ and
the initial data in \eqref{initial11} is taken as \eqref{initial2}.
In this parameter regime, the solution propagates waves
with wavelength at $O(1)$ and $O(\eps^2)$ in space and time,
respectively.
The problem is solved numerically on a bounded domain
$\Omega = (-32, 32)$, i.e. $a=-32$ and $b=32$.
Similar to the second-order time-splitting Fourier pseudospectral
method \cite{BCJT2}, the $S_\text{4c}$ method converges uniformly with respect to
$\eps\in(0,1]$ at spectral order in space. Detailed numerical results
are omitted here for brevity. Here we only present temporal errors
by taking  $h=h_e=\frac{1}{16}$ so that the spatial discretization error
could be negligible.
Table \ref{nonrelativistic_temporal_wave} shows
the temporal errors $e_{\Phi}^r(t=6)$ for the wave function
under different $\tau$ and $\eps\in(0,1]$. Similarly, Tables \ref{nonrelativistic_temporal_density} and
\ref{nonrelativistic_temporal_current} depict the temporal errors $e_{\rho}^r(t=6)$ and $e_{{\bf J}}^r(t=6)$ for the
probability and current, respectively.

\begin{table}[t!]\renewcommand{\arraystretch}{1.2}
	\centering
	\begin{tabular}{ccccccc}
		\hline
		& $\tau_0 = 1$ & $\tau_0 / 2^2$ & $\tau_0 / 2^4$ & $\tau_0 / 2^6$ & $\tau_0 / 2^8$
		& $\tau_0 / 2^{10}$\\
		\hline
		$\eps_0 = 1$ & 2.24E-1 & \textbf{5.07E-4} & 1.95E-6 & 7.63E-9 & $<$1E-10 & $<$1E-10\\
		order & -- &	\textbf{4.39} &	4.01 &	4.00 &	-- &--\\
		\hline
		$\eps_0 / 2$ & 1.18 & 1.05E-2 &	\textbf{3.61E-5} & 1.40E-7 &	5.67E-10 & $<$1E-10\\
		order & -- &	3.41 &	\textbf{4.09} &	4.00 &	3.97 &	--\\
		\hline
		$\eps_0 / 2^2$ & 1.46 &	2.07E-1 & 1.69E-3 &	\textbf{6.09E-6} & 2.37E-8 &	$<$1E-10\\
		order & -- &	1.41 &	3.47 &	\textbf{4.06} &	4.00 &	--\\
		\hline
		$\eps_0 / 2^3$ & 1.41 &	1.50 &	5.88E-2 & 3.84E-4 &	\textbf{1.39E-6} & 5.40E-9\\
		order & -- &	-0.04 &	2.33 &	3.63 &	\textbf{4.06} &	4.00\\
		\hline
		$\eps_0 / 2^4$ & 1.43 &	1.47 &	6.80E-1 &	1.46E-2 &	9.33E-5 &	\textbf{3.38E-7}\\
		order & -- &	-0.02 &	0.56 &	2.77 &	3.65 &	\textbf{4.05}\\
		\hline
	\end{tabular}
\caption{Temporal errors $e_{\Phi}^r(t=6)$ of $S_\text{4c}$ under different $\tau$ and $\eps$ for the  Dirac equation (\ref{LDirac1d2d}) in 1D
	in the nonrelativistic limit regime.}
\label{nonrelativistic_temporal_wave}
\end{table}

\begin{table}[t!]\renewcommand{\arraystretch}{1.2}
	\centering
	\begin{tabular}{ccccccc}
		\hline
		& $\tau_0 = 1$ & $\tau_0 / 2^2$ & $\tau_0 / 2^4$ & $\tau_0 / 2^6$ & $\tau_0 / 2^8$
		& $\tau_0 / 2^{10}$\\
		\hline
		$\eps_0 = 1$ & 1.71E-1 & \textbf{3.73E-4} & 1.44E-6 & 5.62E-9 & $<$1E-10 & $<$1E-10\\
		order & -- &	\textbf{4.42} &	4.01 &	4.00 &	-- &	--\\
		\hline
		$\eps_0 / 2$ & 1.31 & 7.17E-3 &	\textbf{2.45E-5} & 9.50E-8 &	3.94E-10 & $<$1E-10\\
		order & -- &	3.76 &	\textbf{4.10} &	4.01 &	3.96 &	--\\
		\hline
		$\eps_0 / 2^2$ & 8.19E-1 &	2.20E-1 & 8.16E-4 &	\textbf{2.92E-6} & 1.13E-8 &	$<$1E-10\\
		order & -- &	0.95 &	4.04 &	\textbf{4.06} &	4.00 &	--\\
		\hline
		$\eps_0 / 2^3$ & 8.75E-1 & 4.77E-1 & 5.76E-2 & 1.65E-4 & \textbf{5.89E-7} & 2.29E-9\\
		order & -- &	0.44 &	1.52 &	4.22 &	\textbf{4.07} &	4.00\\
		\hline
		$\eps_0 / 2^4$ & 1.00 &	1.12 &	2.04E-1 & 1.49E-2 &	4.03E-5 & \textbf{1.43E-7}\\
		order & -- &	-0.08 &	1.23 &	1.88 &	4.27 &	\textbf{4.07}\\
		\hline
	\end{tabular}
	\caption{Temporal errors $e_{\rho}^r(t=6)$ of $S_\text{4c}$ under different $\tau$ and $\eps$ for the  Dirac equation (\ref{LDirac1d2d}) in 1D
		in the nonrelativistic limit regime.}
	\label{nonrelativistic_temporal_density}
\end{table}

\begin{table}[t!]\renewcommand{\arraystretch}{1.2}
	\centering
	\begin{tabular}{ccccccc}
		\hline
		& $\tau_0 = 1$ & $\tau_0 / 2^2$ & $\tau_0 / 2^4$ & $\tau_0 / 2^6$ & $\tau_0 / 2^8$
		& $\tau_0 / 2^{10}$\\
		\hline
		$\eps_0 = 1$ & 2.92E-1 & \textbf{6.76E-4} &	2.61E-6 & 1.02E-8 &	$<$1E-10 & $<$1E-10\\
		order & -- &	\textbf{4.38} &	4.01 &	4.00 &	-- &	--\\
		\hline
		$\eps_0 / 2$ & 1.30 & 1.98E-2 &	\textbf{6.88E-5} & 2.67E-7 & 1.06E-9 & $<$1E-10\\
		order & -- &	3.02 & \textbf{4.09} &	4.00 &	3.99 &	--\\
		\hline
		$\eps_0 / 2^2$ & 1.29 &	2.98E-1 & 3.40E-3 &	\textbf{1.23E-5} & 4.76E-8 & $<$1E-10\\
		order & -- &	1.06 &	3.23 &	\textbf{4.06} &	4.00 &	--\\
		\hline
		$\eps_0 / 2^3$ & 1.21 &	1.29 &	8.82E-2 & 7.85E-4 &	\textbf{2.85E-6} & 1.11E-8\\
		order & -- &	-0.05 &	1.94 &	3.41 &	\textbf{4.05} &	4.00\\
		\hline
		$\eps_0 / 2^4$ & 1.52 &	1.44 &	1.30 &	2.41E-2	& 1.92E-4 &	\textbf{6.98E-7}\\
		order & -- &	0.04 &	0.07 &	2.88 &	3.48 &	\textbf{4.05}\\
		\hline
	\end{tabular}
	\caption{Temporal errors $e_{\bf J}^r(t=6)$ of $S_\text{4c}$ under different $\tau$ and $\eps$ for the  Dirac equation (\ref{LDirac1d2d}) in 1D
		in the nonrelativistic limit regime.}
	\label{nonrelativistic_temporal_current}
\end{table}

From Tables \ref{nonrelativistic_temporal_wave}-\ref{nonrelativistic_temporal_current},
when $\tau \lesssim \eps^2$, fourth-order convergence is observed
for the $S_\text{4c}$ method in the relative error for the wave function,
probability and current. This suggests that the $\eps$-scalability for
the $S_\text{4c}$ method in the nonrelativistic limit regime is:
$h=O(1)$ and  $\tau = O(\eps^2)$. In addition, noticing
$\Phi=O(1)$, $\rho=O(1)$ and ${\bf J}=O(\eps^{-1})$ when $0\le \eps\ll1$,
we can formally observe the following error bounds for $0<\eps\le 1$, $\tau\lesssim \eps^2$ and
$0\le n \le \frac{T}{\tau}$
\be\begin{split}
&\|\Phi^n - \Phi(t_n, \cdot)\|_{l^2}\lesssim h^{m_0}+\frac{\tau^4}{\eps^6},\quad
\|\rho^n - \rho(t_n, \cdot)\|_{l^2}\lesssim h^{m_0}+\frac{\tau^4}{\eps^6},\\
&\|{\bf J}^n - {\bf J}(t_n, \cdot)\|_{l^2}\lesssim \frac{1}{\eps}\left( h^{m_0}+\frac{\tau^4}{\eps^6}\right).
\end{split}
\ee
where $m_0\ge2$ depends on the regularity of the solution. Rigorous mathematical
justification is still on-going.

\subsection{In the semiclassical limit regime}
Here we take $\varepsilon = \nu = 1$, $\delta \in (0, 1]$.
The initial data in \eqref{initial11} is taken as
\be\begin{split}
\phi_1(0, x) &= \frac{1}{2}e^{-4x^2}e^{iS_0(x) / \delta}\left(1 + \sqrt{1+S_0'(x)^2 }\right), \\
\phi_2(0, x) &= \frac{1}{2}e^{-4x^2}e^{iS_0(x) / \delta}S_0'(x), \quad x\in{\mathbb R},
\end{split}
\ee
with
\begin{equation}
S_0(x) = \frac{1}{40}\big(1 + \cos(2\pi x)\big), \qquad x\in{\mathbb R}.
\end{equation}
In this parameter regime, the solution propagates waves
with wavelength at $O(\delta)$ in both space and time.
The problem is solved numerically on a bounded domain
$\Omega = (-16, 16)$, i.e. $a=-16$ and $b=16$.

Table \ref{semiclassical_spatial_wave} shows
the spatial errors $e_{\Phi}^r(t=2)$ for the wave function
under different $h$ and $\delta\in(0,1]$ with $\tau=\tau_e=10^{-4}$
such that the temporal discretization error could be negligible.
Tables \ref{semiclassical_spatial_density} and
\ref{semiclassical_spatial_current} depict the spatial errors $e_{\rho}^r(t=2)$ and $e_{{\bf J}}^r(t=2)$ for the
probability and current, respectively.
Similarly, Table \ref{semiclassical_temporal_wave} shows
the temporal errors $e_{\Phi}^r(t=2)$ for the wave function
under different $\tau$ and $\delta\in(0,1]$ with $h=
h_e=\frac{1}{128}$ so that the spatial discretization error
could be negligible.
 Tables \ref{semiclassical_temporal_density} and
\ref{semiclassical_temporal_current} depict the temporal errors $e_{\rho}^r(t=2)$ and $e_{{\bf J}}^r(t=2)$ for the
probability and current, respectively.

\begin{table}[t!]\renewcommand{\arraystretch}{1.2}
	\centering
	\begin{tabular}{cccccccc}
		\hline
		& $h_0 = 1$ & $h_0 / 2$ & $h_0 / 2^2$ & $h_0 / 2^3$ & $h_0 / 2^4$ & $h_0 / 2^5$ & $h_0 / 2^6$\\
		\hline
		$\delta_0 = 1$ & \textbf{8.25E-1} &	2.00E-1 & 9.52E-3 &	6.66E-6 & 3.78E-10 & $<$1E-10 & $<$1E-10\\
		\hline
		$\delta_0 / 2$ & 1.20 &	\textbf{7.40E-1} & 5.31E-2 &	8.87E-5 & 3.43E-10 & $<$1E-10 & $<$1E-10\\
		\hline
		$\delta_0 / 2^2$ & 1.41 & 9.89E-1 &	\textbf{5.12E-1} & 3.81E-3 &	9.24E-10 & $<$1E-10 & $<$1E-10\\
		\hline
		$\delta_0 / 2^3$ & 1.76 & 1.21 & 7.30E-1 &	\textbf{2.76E-1} & 1.91E-5 &	4.17E-10 & $<$1E-10\\
		\hline
		$\delta_0 / 2^4$ & 1.37 & 1.36 & 1.36 &	5.31E-1 & \textbf{1.54E-1} &	5.31E-10 & $<$1E-10\\
		\hline
		$\delta_0 / 2^5$ & 2.44 & 1.92 & 1.36 &	1.36 & 4.36E-1 & \textbf{5.49E-2} &	2.90E-10\\
		\hline
	\end{tabular}
	\caption{Spatial errors $e_{\Phi}^r(t=2)$ of $S_\text{4c}$ under different $h$ and $\delta$ for the  Dirac equation (\ref{LDirac1d2d}) in 1D in the semiclassical limit regime.}
	\label{semiclassical_spatial_wave}
\end{table}

\begin{table}[t!]\renewcommand{\arraystretch}{1.2}
	\centering
	\begin{tabular}{cccccccc}
		\hline
		& $h_0 = 1$ & $h_0 / 2$ & $h_0 / 2^2$ & $h_0 / 2^3$ & $h_0 / 2^4$ & $h_0 / 2^5$ & $h_0 / 2^6$\\
		\hline
		$\delta_0 = 1$ & \textbf{5.83E-1} & 1.39E-1 & 8.27E-3 & 4.36E-6 & 4.92E-10 & $<$1E-10 & $<$1E-10\\
		\hline
		$\delta_0 / 2$ & 1.29 &	\textbf{5.22E-1} & 3.71E-2 &	5.56E-5 & 2.79E-10 & $<$1E-10 &	$<$1E-10\\
		\hline
		$\delta_0 / 2^2$ & 9.22E-1 & 7.44E-1 &	\textbf{2.41E-1} & 1.54E-3 &	6.75E-10 &	$<$1E-10 & $<$1E-10\\
		\hline
		$\delta_0 / 2^3$ & 1.63 & 9.39E-1 &	6.11E-1 & \textbf{6.33E-2} &	4.78E-6 & 8.19E-10 & $<$1E-10\\
		\hline
		$\delta_0 / 2^4$ & 2.04 & 1.40 & 1.00 &	3.57E-1 & \textbf{1.97E-2} &	6.76E-10 & $<$1E-10\\
		\hline
		$\delta_0 / 2^5$ & 5.81 & 3.65 & 1.07 &	1.01 &	1.86E-1 & \textbf{3.35E-3} &	5.67E-10\\
		\hline
	\end{tabular}
	\caption{Spatial errors $e_{\rho}^r(t=2)$ of $S_\text{4c}$ under different $h$ and $\delta$ for the  Dirac equation (\ref{LDirac1d2d}) in 1D in the semiclassical limit regime.}
	\label{semiclassical_spatial_density}
\end{table}

\begin{table}[t!]\renewcommand{\arraystretch}{1.2}
	\centering
	\begin{tabular}{cccccccc}
		\hline
		& $h_0 = 1$ & $h_0 / 2$ & $h_0 / 2^2$ & $h_0 / 2^3$ & $h_0 / 2^4$ & $h_0 / 2^5$ & $h_0 / 2^6$\\
		\hline
		$\delta_0 = 1$ & \textbf{8.07E-1} & 1.67E-1 & 1.05E-2 &	5.69E-6 & 5.10E-10 & $<$1E-10 &	$<$1E-10\\
		\hline
		$\delta_0 / 2$ & 1.45 &	\textbf{6.89E-1} & 4.28E-2 &	6.46E-5 & 3.06E-10 & $<$1E-10 &	$<$1E-10\\
		\hline
		$\delta_0 / 2^2$ & 1.94 & 1.05 & \textbf{3.52E-1} &	2.13E-3 & 7.96E-10 & $<$1E-10 &	$<$1E-10\\
		\hline
		$\delta_0 / 2^3$ & 2.52 & 1.03 & 7.07E-1 &	\textbf{1.24E-1} & 7.75E-6 &	8.16E-10 &	$<$1E-10\\
		\hline
		$\delta_0 / 2^4$ & 2.85 & 1.77 & 1.10 &	5.84E-1 & \textbf{4.72E-2} &	6.75E-10 &	$<$1E-10\\
		\hline
		$\delta_0 / 2^5$ & 3.88 & 4.06 & 1.11 &	1.07 &	3.81E-1 & \textbf{1.22E-2} &	5.63E-10\\
		\hline
	\end{tabular}
	\caption{Spatial errors $e_{{\bf J}}^r(t=2)$ of $S_\text{4c}$ under different $h$ and $\delta$ for the  Dirac equation (\ref{LDirac1d2d}) in 1D in the semiclassical limit regime.}
	\label{semiclassical_spatial_current}
\end{table}

\begin{table}[t!]\renewcommand{\arraystretch}{1.2}
	\centering
	\begin{tabular}{cccccccc}
		\hline
		& $\tau_0 = 1$ & $\tau_0 / 2$ & $\tau_0 / 2^2$ & $\tau_0 / 2^3$ & $\tau_0 / 2^4$ & $\tau_0 / 2^5$ &
		$\tau_0 / 2^6$ \\
		\hline
		$\delta_0 = 1$ & 1.60E-1 & \textbf{1.58E-2} & 5.09E-4 &	2.08E-5 & 1.27E-6 &	7.89E-8 & 4.94E-9 \\
		order & -- & \textbf{3.34} &	4.96 &	4.61 &	4.04 &	4.01 &	4.00 \\
		\hline
		$\delta_0 / 2$ & 8.66E-1 &	1.48E-1 & \textbf{7.17E-3} &	3.90E-4 & 2.41E-5 &	1.50E-6 & 9.39E-8\\
		order & -- &	2.55 & \textbf{4.36} & 4.20 & 4.02 &	4.00 & 4.00 \\
		\hline
		$\delta_0 / 2^2$ & 1.26 & 9.52E-1 &	1.38E-1 & \textbf{7.38E-3} &	4.50E-4 & 2.80E-5 &	1.75E-6 \\
		order & -- &	0.40 & 2.78 &\textbf{ 4.23} & 4.03 &	4.01 &	4.00\\
		\hline
		$\delta_0 / 2^3$ & 1.45 & 1.20 & 9.94E-1 & 1.62E-1 & \textbf{9.11E-3} & 5.57E-4 & 3.46E-5 \\
		order & -- &	0.27 &	0.27 &	2.62 &	\textbf{4.15} &	4.03 &	4.01\\
		\hline
		$\delta_0 / 2^4$ & 1.40 & 1.44 & 1.12 &	9.46E-1 & 2.62E-1 &	\textbf{1.50E-2} & 9.15E-4 \\
		order & -- &	-0.04 &	0.36 &	0.25 & 1.85 & \textbf{4.13} & 4.03 \\
		\hline
		$\delta_0 / 2^5$ & 1.44 & 1.44 & 1.42 &	1.22 & 1.07 & 4.43E-1 &	\textbf{2.83E-2}\\
		order & -- &	-0.01 &	0.03 &	0.22 &	0.19 &	1.27 &	\textbf{3.97}\\
		\hline
	\end{tabular}
	\caption{Temporal errors $e_{\Phi}^r(t=2)$ of $S_\text{4c}$ under different $\tau$ and $\delta$ for the  Dirac equation (\ref{LDirac1d2d}) in 1D in the semiclassical limit regime.}
	\label{semiclassical_temporal_wave}
\end{table}

\begin{table}[t!]\renewcommand{\arraystretch}{1.2}
	\centering
	\begin{tabular}{cccccccc}
		\hline
		& $\tau_0 = 1$ & $\tau_0 / 2$ & $\tau_0 / 2^2$ & $\tau_0 / 2^3$ & $\tau_0 / 2^4$ & $\tau_0 / 2^5$ &
		$\tau_0 / 2^6$ \\
		\hline
		$\delta_0 = 1$ & 1.15E-1 & \textbf{1.23E-2} & 4.11E-4 & 1.70E-5 & 1.03E-6 & 6.40E-8 & 4.11E-9 \\
		order & -- &	\textbf{3.23} & 4.90 & 4.59 & 4.05 &	4.01 &	3.96 \\
		\hline
		$\delta_0 / 2$ & 5.05E-1 &	9.20E-2 & \textbf{4.93E-3} &	2.36E-4 & 1.44E-5 &	8.98E-7 & 5.62E-8\\
		order & -- &	2.45 & \textbf{4.22} & 4.39 & 4.03 &	4.01 &	4.00 \\
		\hline
		$\delta_0 / 2^2$ & 7.69E-1 & 4.22E-1 & 4.32E-2 & \textbf{2.85E-3} & 1.73E-4 & 1.08E-5 &	6.72E-7\\
		order & -- &	0.86 & 3.29 & \textbf{3.92} & 4.04 &	4.01 & 4.00 \\
		\hline
		$\delta_0 / 2^3$ & 1.28 & 9.03E-1 &	5.67E-1 & 3.77E-2 &	\textbf{2.03E-3} & 1.23E-4 &	7.66E-6 \\
		order & -- &	0.51 &	0.67 &	3.91 &	\textbf{4.21} &	4.04 &	4.01 \\
		\hline
		$\delta_0 / 2^4$ & 8.80E-1 & 1.25 &	9.86E-1 & 7.53E-1 &	2.58E-2 & \textbf{1.35E-3} &	8.15E-5\\
		order & -- &	-0.50 &	0.34 & 0.39 & 4.87 & \textbf{4.26} &	4.05 \\
		\hline
		$\delta_0 / 2^5$ & 9.60E-1 & 9.90E-1 & 1.09 & 1.08 & 8.82E-1 & 2.59E-2 & \textbf{1.16E-3}\\
		order & -- &	-0.04 &	-0.14 &	0.02 &	0.29 &	5.09 &	\textbf{4.48} \\
		\hline
	\end{tabular}
	\caption{Temporal errors $e_{\rho}^r(t=2)$ of $S_\text{4c}$ under different $\tau$ and $\delta$ for the  Dirac equation (\ref{LDirac1d2d}) in 1D in the semiclassical limit regime.}
	\label{semiclassical_temporal_density}
\end{table}

\begin{table}[t!]\renewcommand{\arraystretch}{1.2}
	\centering
	\begin{tabular}{cccccccc}
		\hline
		& $\tau_0 = 1$ & $\tau_0 / 2$ & $\tau_0 / 2^2$ & $\tau_0 / 2^3$ & $\tau_0 / 2^4$ & $\tau_0 / 2^5$ &
		$\tau_0 / 2^6$ \\
		\hline
		$\delta_0 = 1$ & 1.98E-1 & \textbf{2.21E-2} & 6.42E-4 & 2.34E-5 & 1.42E-6 & 8.84E-8 & 5.55E-9 \\
		order & -- &	\textbf{3.16} &	5.11 &	4.78 &	4.04 &	4.01 &	3.99\\
		\hline
		$\delta_0 / 2$ & 6.61E-1 &	1.93E-1 & \textbf{8.72E-3} & 4.34E-4 & 2.67E-5 &	1.66E-6 & 1.04E-7\\
		order & -- &	1.78 &	\textbf{4.47} & 4.33 & 4.02 & 4.01 &	4.00 \\
		\hline
		$\delta_0 / 2^2$ & 1.25 & 6.66E-1 &	1.46E-1 & \textbf{8.44E-3} &	5.16E-4 & 3.21E-5 &	2.00E-6 \\
		order & -- &	0.91 &	2.19 &	\textbf{4.12} &	4.03 &	4.01 &	4.00 \\
		\hline
		$\delta_0 / 2^3$ & 1.57 & 1.19 & 7.29E-1 & 1.23E-1 & \textbf{7.10E-3} & 4.35E-4 & 2.71E-5\\
		order & -- &	0.39 &	0.71 &	2.57 &	\textbf{4.11} &	4.03 &	4.01\\
		\hline
		$\delta_0 / 2^4$ & 1.04 & 1.47 & 1.15 &	8.24E-1 & 9.50E-2 &	\textbf{5.86E-3} & 3.60E-4 \\
		order & -- &	-0.50 &	0.35 &	0.48 &	3.12 &	\textbf{4.02} &	4.02\\
		\hline
		$\delta_0 / 2^5$ & 1.02 & 1.14 & 1.19 &	1.19 &	9.39E-1 & 7.34E-2 &	\textbf{5.22E-3}\\
		order & -- &	-0.16 &	-0.06 &	0.01 &	0.34 &	3.68 &	\textbf{3.81}\\
		\hline
	\end{tabular}
	\caption{Temporal errors $e_{\bf J}^r(t=2)$ of $S_\text{4c}$ under different $\tau$ and $\delta$ for the  Dirac equation (\ref{LDirac1d2d}) in 1D in the semiclassical limit regime.}
	\label{semiclassical_temporal_current}
\end{table}

From Tables \ref{semiclassical_spatial_wave}-\ref{semiclassical_spatial_current},
when $h \lesssim \delta$, spectral convergence (in space) is observed
for the $S_\text{4c}$ method in the relative error for the wave function,
probability and current. Similarly, from Tables \ref{semiclassical_temporal_wave}-\ref{semiclassical_temporal_current},
when $\tau \lesssim \delta$, fourth-order convergence (in time) is observed
for the $S_\text{4c}$ method  in the relative error for the wave function,
probability and current.
 These suggest that the $\delta$-scalability for
the $S_\text{4c}$ method in the semiclassical limit regime is:
$h=O(\delta)$ and  $\tau = O(\delta)$. In addition, noticing
$\Phi=O(1)$, $\rho=O(1)$ and ${\bf J}=O(1)$ when $0\le \delta\ll1$,
we can formally observe the following error bounds for $0<\delta\le 1$, $\tau\lesssim \delta$, $h\lesssim \delta$ and
$0\le n \le \frac{T}{\tau}$
\be\begin{split}
&\|\Phi^n - \Phi(t_n, \cdot)\|_{l^2}\lesssim \frac{h^{m_0}}{\delta^{m_0}}+\frac{\tau^4}{\delta^4},\quad
\|\rho^n - \rho(t_n, \cdot)\|_{l^2}\lesssim \frac{h^{m_0}}{\delta^{m_0}}+\frac{\tau^4}{\delta^4},\\
&\|{\bf J}^n - {\bf J}(t_n, \cdot)\|_{l^2}\lesssim  \frac{h^{m_0}}{\delta^{m_0}}+\frac{\tau^4}{\delta^4}.
\end{split}
\ee
where $m_0\ge2$ depends on the regularity of the solution. Rigorous mathematical
justification is still on-going.

\subsection{In the simultaneously nonrelativistic and massless limit regime}
We take $d=1$, $\delta=1$ and $\nu=\eps$ in (\ref{LDirac1d2d}) with
$\eps\in(0,1]$. The initial data in \eqref{initial11} is taken as \eqref{initial2}.
In this parameter regime, the solution propagates waves with wavelength at $O(1)$ and $O(\eps)$
in space and time, respectively.
The problem is solved numerically on a bounded domain
$\Omega = (-128, 128)$, i.e. $a=-128$ and $b=128$ by $S_\text{4c}$.
Similar to the nonrelativistic limit regime, the $S_\text{4c}$ method converges uniformly with respect to
$\eps\in(0,1]$ at spectral order in space. Detailed numerical results
are omitted here for brevity. Here we only present temporal errors
by taking  $h=h_e=\frac{1}{16}$ so that the spatial discretization error
could be negligible.
Table \ref{nrelative_mless_temporal_wave} shows
the temporal errors $e_{\Phi}^r(t=2)$ for the wave function
under different $\tau$ and $\eps\in(0,1]$. Similarly, Tables
\ref{nrelative_mless_temporal_density} and
\ref{nrelative_mless_temporal_current}
 depict the temporal errors $e_{\rho}^r(t=2)$
 and $e_{{\bf J}}^r(t=2)$ for the
probability and current, respectively.

\begin{table}[t!]\renewcommand{\arraystretch}{1.2}
	\centering
	\begin{tabular}{cccccccc}
		\hline
		& $\tau_0 = 1$ & $\tau_0 / 2$ & $\tau_0 / 2^2$ & $\tau_0 / 2^3$ & $\tau_0 / 2^4$ & $\tau_0 / 2^5$ &
		$\tau_0 / 2^6$\\
		\hline
		$\eps_0 = 1$ & \textbf{1.12E-1} & 4.20E-3 &	2.18E-4 & 1.33E-5 &	8.30E-7 &	5.18E-8 &	3.24E-9 \\
		order & -- &	4.74 &	4.27 &	4.03 &	4.01 &	4.00 &	4.00\\
		\hline
		$\eps_0 / 2$ & 4.72E-1 & \textbf{3.66E-2} &	1.17E-3 &	6.64E-5 &	4.09E-6 &	2.55E-7 &	1.59E-8 \\
		order & -- &	\textbf{3.69} &	4.97 &	4.14 &	4.02 &	4.01 &	4.00 \\
		\hline
		$\eps_0 / 2^2$ & 1.14 &	2.72E-1 &	\textbf{1.27E-2} &	3.64E-4 &	2.10E-5 &	1.30E-6 &	8.08E-8 \\
		order & -- &	2.07 &	\textbf{4.42} &	5.12 &	4.11 &	4.02 &	4.00 \\
		\hline
		$\eps_0 / 2^3$ & 1.29 &	5.84E-1 &	1.60E-1 &	\textbf{5.19E-3} &	1.41E-4 &	8.22E-6 &	5.07E-7 \\
		order & -- &	1.14 &	1.87 &	\textbf{4.94} &	5.20 &	4.10 &	4.02\\
		\hline
		$\eps_0 / 2^4$ & 1.40 &	7.31E-1 &	3.40E-1 &	9.81E-2 &	\textbf{2.46E-3} &	6.16E-5 &	3.58E-6 \\
		order & -- &	0.94 &	1.10 &	1.79 &	\textbf{5.32} &	5.32 &	4.10\\
		\hline
		$\eps_0 / 2^5$ & 1.39 &	1.06 &	3.90E-1 &	2.09E-1 &	6.32E-2 &	\textbf{1.27E-3} &	2.84E-5\\
		order & -- &	0.40 &	1.44 &	0.90 &	1.72 &	\textbf{5.64} &	5.48\\
		\hline
		$\eps_0 / 2^6$ & 1.48 &	1.48 &	5.90E-1 &	2.19E-1 &	1.32E-1 &	4.21E-2 &	\textbf{7.04E-4} \\
		order & -- &	0.00 &	1.32 &	1.43 &	0.72 &	1.65 &	\textbf{5.90}\\
		\hline
	\end{tabular}
	\caption{Temporal errors $e_{\Phi}^r(t=2)$ of $S_\text{4c}$ under different $\tau$ and $\eps$ for the  Dirac equation (\ref{LDirac1d2d}) in 1D in the simultaneously nonrelativistic and massless limit regime.}
	\label{nrelative_mless_temporal_wave}
\end{table}

\begin{table}[t!]\renewcommand{\arraystretch}{1.2}
	\centering
	\begin{tabular}{cccccccc}
		\hline
		& $\tau_0 = 1$ & $\tau_0 / 2$ & $\tau_0 / 2^2$ & $\tau_0 / 2^3$ & $\tau_0 / 2^4$ & $\tau_0 / 2^5$ &
		$\tau_0 / 2^6$ \\
		\hline
		$\eps_0 = 1$ & \textbf{8.62E-2} &	3.48E-3 &	1.91E-4 &	1.17E-5 &	7.28E-7 &	4.54E-8 &	2.82E-9 \\
		order & -- &	4.63 &	4.19 &	4.03 &	4.01 &	4.00 &	4.01\\
		\hline
		$\eps_0 / 2$ & 3.56E-1 &	\textbf{2.97E-2} &	7.90E-4 &	4.56E-5 &	2.82E-6 &	1.76E-7 &	1.10E-8 0\\
		order & -- &	\textbf{3.59} &	5.23 &	4.12 &	4.01 &	4.00 &	4.00\\
		\hline
		$\eps_0 / 2^2$ & 9.98E-1 &	2.83E-1 &	\textbf{1.22E-2} &	2.54E-4 &	1.45E-5 &	8.95E-7 &	5.57E-8\\
		order & -- &	1.82 &	\textbf{4.53} &	5.59 &	4.13 &	4.02 &	4.01 \\
		\hline
		$\eps_0 / 2^3$ & 8.15E-1 &	5.58E-1 &	1.60E-1 &	\textbf{4.18E-3} &	9.00E-5 &	5.29E-6 &	3.27E-7\\
		order & -- &	0.55 &	1.80 &	\textbf{5.26} &	5.54 &	4.09 &	4.02\\
		\hline
		$\eps_0 / 2^4$ & 9.32E-1 &	7.05E-1 &	3.32E-1 &	1.02E-1 &	\textbf{1.69E-3} &	3.69E-5 &	2.19E-6 \\
		order & -- &	0.40 &	1.09 &	1.70 &	\textbf{5.92} &	5.52 &	4.08 \\
		\hline
		$\eps_0 / 2^5$ & 1.05 &	6.88E-1 &	3.28E-1 &	2.07E-1 &	6.70E-2 &	\textbf{8.68E-4} &	1.63E-5 \\
		order & -- &	0.61 &	1.07 &	0.67 &	1.63 &	\textbf{6.27} &	5.73 \\
		\hline
		$\eps_0 / 2^6$ & 8.39E-1 &	8.04E-1 &	4.76E-1 &	1.72E-1 &	1.27E-1 &	4.33E-2 &	\textbf{5.49E-4}\\
		order & -- &	0.06 &	0.76 &	1.47 &	0.44 &	1.55 &	\textbf{6.30}\\
		\hline
	\end{tabular}
	\caption{Temporal errors $e_{\rho}^r(t=2)$ of $S_\text{4c}$ under different $\tau$ and $\eps$ for the  Dirac equation (\ref{LDirac1d2d}) in 1D in the simultaneously nonrelativistic and massless limit regime.}
	\label{nrelative_mless_temporal_density}
\end{table}

\begin{table}[t!]\renewcommand{\arraystretch}{1.2}
	\centering
	\begin{tabular}{cccccccc}
		\hline
		& $\tau_0 = 1$ & $\tau_0 / 2$ & $\tau_0 / 2^2$ & $\tau_0 / 2^3$ & $\tau_0 / 2^4$ & $\tau_0 / 2^5$ &
		$\tau_0 / 2^6$\\
		\hline
		$\eps_0 = 1$ & \textbf{2.03E-1} &	7.11E-3 &	4.03E-4 &	2.47E-5 &	1.54E-6 &	9.61E-8 &	5.98E-9 \\
		order & -- &	4.84 &	4.14 &	4.03 &	4.01 &	4.00 &	4.01\\
		\hline
		$\eps_0 / 2$ & 7.37E-1 & \textbf{5.58E-2} &	1.89E-3 &	1.11E-4 &	6.84E-6 &	4.26E-7 &	2.66E-8 \\
		order & -- &	\textbf{3.72} &	4.88 &	4.09 &	4.02 &	4.00 &	4.00\\
		\hline
		$\eps_0 / 2^2$ & 1.34 & 4.30E-1 &	\textbf{1.81E-2} &	5.59E-4 &	3.31E-5 &	2.05E-6 &	1.28E-7 \\
		order & -- &	1.64 &	\textbf{4.57} &	5.01 &	4.08 &	4.02 &	4.00 \\
		\hline
		$\eps_0 / 2^3$ & 1.20 &	7.03E-1 &	2.30E-1 &	\textbf{6.14E-3} &	1.89E-4 &	1.13E-5 &	7.00E-7 \\
		order & -- &	0.77 &	1.61 &	\textbf{5.23} &	5.02 &	4.06 &	4.01 \\
		\hline
		$\eps_0 / 2^4$ & 1.36 &	1.04 &	4.15E-1 &	1.31E-1 &	\textbf{2.52E-3} &	7.59E-5 &	4.57E-6 \\
		order & -- &	0.39 &	1.32 &	1.66 &	\textbf{5.71} &	5.05 &	4.05 \\
		\hline
		$\eps_0 / 2^5$ & 1.63 &	1.32 &	5.79E-1 &	2.47E-1 &	8.28E-2 &	\textbf{1.27E-3} & 3.26E-5 \\
		order & -- &	0.30 &	1.19 &	1.23 &	1.58 &	\textbf{6.03} &	5.28 \\
		\hline
		$\eps_0 / 2^6$ & 1.38 &	1.47 &	8.97E-1	& 3.04E-1 &	1.52E-1 &	5.54E-2 &	\textbf{7.52E-4}\\
		order & -- &	-0.09 &	0.71 &	1.56 &	1.00 &	1.45 &	\textbf{6.20} \\
		\hline
	\end{tabular}
	\caption{Temporal errors $e_{\bf J}^r(t=2)$ of $S_\text{4c}$ under different $\tau$ and $\eps$ for the  Dirac equation (\ref{LDirac1d2d}) in 1D  in the simultaneously nonrelativistic and massless limit regime.}
	\label{nrelative_mless_temporal_current}
\end{table}

From Tables \ref{nrelative_mless_temporal_wave}-\ref{nrelative_mless_temporal_current},
when $\tau \lesssim \eps$, fourth-order convergence is observed
for the $S_\text{4c}$ method in the relative error for the wave function,
probability and current. This suggests that the $\eps$-scalability for
the $S_\text{4c}$ method in the simultaneously nonrelativistic and massless limit regime is:
$h=O(1)$ and  $\tau = O(\eps)$. In addition, noticing
$\Phi=O(1)$, $\rho=O(1)$ and ${\bf J}=O(\eps^{-1})$ when $0\le \eps\ll1$,
we can formally observe the following error bounds for $0<\eps\le 1$, $\tau\lesssim \eps$ and
$0\le n \le \frac{T}{\tau}$
\be\begin{split}
&\|\Phi^n - \Phi(t_n, \cdot)\|_{l^2}\lesssim h^{m_0}+\frac{\tau^4}{\eps^3},\quad
\|\rho^n - \rho(t_n, \cdot)\|_{l^2}\lesssim h^{m_0}+\frac{\tau^4}{\eps^3},\\
&\|{\bf J}^n - {\bf J}(t_n, \cdot)\|_{l^2}\lesssim \frac{1}{\eps}\left( h^{m_0}+\frac{\tau^4}{\eps^3}\right).
\end{split}
\ee
where $m_0\ge2$ depends on the regularity of the solution. Rigorous mathematical
justification is still on-going.

\bigskip
Based on the discussion in Section 1 and numerical comparison results
in this section, Table \ref{Properties} lists spatial/temporal
wavelengths of the Dirac equation under different parameter regimes and the corresponding spatial/temporal resolution
of the $S_\text{4c}$ method.

\begin{table}[t!]\renewcommand{\arraystretch}{1.2}
	\small
	\centering
	\begin{tabular}{c|c|c|c|c|c|c}
		\hline
		& \tabincell{c}{Spatial\\wavelength} & \tabincell{c}{Temporal\\ wavelength} & \tabincell{c}{Spatial\\ accuracy} &
		\tabincell{c}{Temporal\\accuracy} & \tabincell{c}{Spatial\\ resolution} & \tabincell{c}{Temporal \\resolution}\\
		\hline
		\tabincell{c}{Standard\\regime}& $O(1)$ & $O(1)$ & spectral & $O(\tau^4)$ & $O(1)$ & $O(1)$\\
		\hline
		\tabincell{c}{Nonrelativistic\\limit regime}& $O(1)$ & $O(\eps^2)$ & spectral & $O(\frac{\tau^4}{\eps^6})$ & $O(1)$ & $O(\eps^2)$\\
		\hline
		\tabincell{c}{Semiclassical\\limit regime}& $O(\delta)$ & $O(\delta)$ & spectral & $O(\frac{\tau^4}{\delta^4})$ &
		$O(\delta)$ & $O(\delta)$\\
		\hline
		\tabincell{c}{Nonrelativistic \\ \&massless\\limit regime} &$O(1)$ & $O(\eps)$ & spectral & $O(\frac{\tau^4}{\eps^3})$ & $O(1)$ & $O(\eps)$\\
		\hline
		\tabincell{c}{Massless\\limit regime} &$O(1)$ & $O(1)$ & spectral & $O(\tau^4)$ & $O(1)$ & $O(1)$\\
		\hline
	\end{tabular}
    \caption{Spatial/temporal
wavelengths of the Dirac equation under different parameter regimes and the corresponding spatial/temporal resolution
of the $S_\text{4c}$ method.}
    \label{Properties}
\end{table}

\section{Conclusion} \label{sec7}

A new fourth-order compact time-splitting Fourier
pseudospectral ($S_\text{4c}$) method was proposed for the Dirac equation.
It is explicit, fourth-order in time and  spectral accurate
in space. One major advantage in the method is to avoid using
negative time steps in integrating sub-problems via the double commutator. Numerical results showed that it is much more accurate than first-order and second-order
time-splitting methods, and it is more accurate
than the standard fourth-order time-splitting method and is more efficient
than the partitioned Runge-Kutta time-splitting method, especially in 1D or in
high dimensions without magnetic potentials.
In addition, it is very robust for simulating long time dynamics.
Spatial and temporal resolution of the proposed numerical method
was studied numerically for the Dirac equation under different parameter
regimes including the nonrelativistic limit regime, the semiclassical
limit regime, and the simultaneously nonrelativistic and massless limit regime. Based on our extensive numerical results, for  numerical simulation of the dynamics of the Dirac equation in 1D or in high dimensions without magnetic potential, the $S_\text{4c}$ method  is a very efficient and accurate as well as simple numerical method.
Of course, for the Dirac equation in high dimensions with magnetic potential,
$S_\text{4RK}$ is a good choice.

\bibliographystyle{model1-num-names}
\bibliography{<your-bib-database>}

\vspace{10pt}
\setcounter{equation}{0}  


\begin{center}
	{\bf Appendix A}. Proof of Lemma 3.3 on double commutator of the Dirac equation in 2D
\end{center}
\setcounter{equation}{0}
\renewcommand{\theequation}{A.\arabic{equation}}

\begin{proof}
Combining \eqref{TW2d1} and \eqref{dcm2}, we obtain
\begin{equation}
\label{splitA1}
[W, [T, W]] = -\frac{1}{\eps}[W, [\sigma_1\partial_1, W]] -\frac{1}{\eps} [W, [\sigma_2\partial_2, W]]
- \frac{i\nu}{\delta\eps^2}[W, [\sigma_3, W]].
\end{equation}
From \eqref{Pauli}, we have
\be\begin{split}
\label{Pauliper}
&\sigma_j^2=I_2, \quad \sigma_j\sigma_l=-\sigma_l\sigma_j, \qquad 1\le j\ne l\le 3,\\
&\sigma_1\sigma_2=i\sigma_3, \quad \sigma_2\sigma_3=i\sigma_1,
\quad \sigma_3\sigma_1=i\sigma_2.
\end{split}
\ee
Noticing \eqref{TW2d1}, \eqref{dcm1} and \eqref{Pauliper}, we get
\begin{eqnarray}
\label{2Dcommutator_1}
&&[W, [\sigma_1\partial_1, W]]\nn\\
&&=  -\frac{1}{\delta^2}\Big(2\big(V(\bx)I_2 - A_1(\bx)\sigma_1 -
A_2(\bx)\sigma_2\big)(\sigma_1\partial_1)
\big(V(\bx)I_2 - A_1(\bx)\sigma_1 - A_2(\bx)\sigma_2\big) \nn\\
&&\ \ \ - \big(V(\bx)I_2 - A_1(\bx)\sigma_1 - A_2(\bx)\sigma_2\big)^2(\sigma_1\partial_1) -
(\sigma_1\partial_1)\big(V(\bx)I_2 - A_1(\bx)\sigma_1 - A_2(\bx)\sigma_2\big)^2\Big)\nn\\
&&=   -\frac{2}{\delta^2}\sigma_1A_2(\bx)\sigma_2\big(\partial_1V(\bx)I_2 - \partial_1A_1(\bx)\sigma_1 -
\partial_1A_2(\bx)\sigma_2\big)\nn\\
&&\ \ \  - \frac{2}{\delta^2}\sigma_1\big(V(\bx)I_2 - A_1(\bx)\sigma_1 + A_2(\bx)\sigma_2\big)
\big(V(\bx)I_2 - A_1(\bx)\sigma_1 -
A_2(\bx)\sigma_2\big)\partial_1\nn\\
&&\ \ \  + \frac{1}{\delta^2}\sigma_1\big(V(\bx)I_2 - A_1(\bx)\sigma_1 +
A_2(\bx)\sigma_2\big)^2\partial_1 + \frac{1}{\delta^2}\sigma_1\big(V(\bx)I_2 - A_1(\bx)\sigma_1 -
A_2(\bx)\sigma_2\big)^2\partial_1\nn\\
& &\ \ \ - \frac{2}{\delta^2}\sigma_1A_2(\bx)\sigma_2
\big(\partial_1V(\bx)I_2 - \partial_1A_1(\bx)\sigma_1 -
\partial_1A_2(\bx)\sigma_2\big)\nn\\
 &&=  -\frac{4}{\delta^2}A_2(\bx)\big(\partial_1V(\bx)\sigma_1\sigma_2 + \partial_1A_1(\bx)\sigma_2 -
 \partial_1A_2(\bx)\sigma_1\big) + \frac{4}{\delta^2}A_2^2(\bx)\sigma_1\partial_1\nn\\
&&\ \ \  - \frac{4}{\delta^2}A_1(\bx)A_2(\bx)\sigma_2\partial_1\nn\\
&&= \frac{4}{\delta^2}\big(A_2^2(\bx)\sigma_1 - A_1(\bx)A_2(\bx)\sigma_2\big)\partial_1
+ \frac{4}{\delta^2}A_2(\bx)\big(\partial_1A_2(\bx)\sigma_1 - \partial_1A_1(\bx)\sigma_2\big)\nn\\
&&\ \ \ - \frac{4i}{\delta^2}A_2(\bx)\partial_1V(\bx)\sigma_3.
\end{eqnarray}
\begin{eqnarray}
\label{2Dcommutator_3}
[W, [\sigma_3, W]] &= & -\frac{1}{\delta^2}\Big(2\big(V(\bx)I_2 - A_1(\bx)\sigma_1 - A_2(\bx)\sigma_2\big)
\sigma_3\big(V(\bx)I_2 - A_1(\bx)\sigma_1 - A_2(\bx)\sigma_2\big)\nn\\
& &- \big(V(\bx)I_2 - A_1(\bx)\sigma_1 - A_2(\bx)\sigma_2\big)^2\sigma_3
- \sigma_3\big(V(\bx)I_2 - A_1(\bx)\sigma_1 - A_2(\bx)\sigma_2\big)^2\Big)\nn\\
&= & \frac{2}{\delta^2}\sigma_3\big(V(\bx)I_2 + A_1(\bx)\sigma_1 + A_2(\bx)\sigma_2\big)
\big(A_1(\bx)\sigma_1 + A_2(\bx)\sigma_2\big) \nn\\
& &- \frac{2}{\delta^2}\sigma_3\big(A_1(\bx)\sigma_1 + A_2(\bx)\sigma_2\big)
\big(V(\bx)I_2 - A_1(\bx)\sigma_1 - A_2(\bx)\sigma_2\big)\nn\\
&= & \frac{4}{\delta^2}\big(A_1^2(\bx) + A_2^2(\bx)\big)\sigma_3.
\end{eqnarray}
\begin{eqnarray}
\label{2Dcommutator_2}
[W, [\sigma_2\partial_2, W]] &= &- \frac{4}{\delta^2}\big(A_1(\bx)A_2(\bx)\sigma_1
- A_1^2(\bx)\sigma_2\big)\partial_2 -\frac{4}{\delta^2}A_1(\bx)\big(\partial_2A_2(\bx)\sigma_1 - \partial_2A_1(\bx)\sigma_2\big)\nn\\
&&+ \frac{4i}{\delta^2}A_1(\bx)\partial_2V(\bx)\sigma_3 .
\end{eqnarray}
Plugging (\ref{2Dcommutator_1}), (\ref{2Dcommutator_2})
and (\ref{2Dcommutator_3}) into (\ref{splitA1}), after some computation,
we can get \eqref{commutator_2D}.
\end{proof}

\begin{center}
	{\bf Appendix B}. Proof of Lemma 3.4 on double commutator of the Dirac equation in 3D
\end{center}
\setcounter{equation}{0}
\renewcommand{\theequation}{B.\arabic{equation}}

\begin{proof}
Combining \eqref{TW3d1} and \eqref{dcm2}, we obtain
\begin{equation}
\label{split2}
[W, [T, W]] = -\frac{1}{\eps}[W, [\alpha_1\partial_1, W]] -\frac{1}{\eps} [W, [\alpha_2\partial_2, W]]
-\frac{1}{\eps} [W, [\alpha_3\partial_3, W]] - \frac{i\nu}{\delta\eps^2} [W, [\beta, W]].
\end{equation}
From \eqref{matrices} and \eqref{gammam}, we have
\be\begin{split}
\label{matricper}
&\beta^2=I_4, \quad \alpha_j^2=I_4, \quad \alpha_j\alpha_l=-\alpha_l\alpha_j,\\
&\beta\alpha_j = -\alpha_j\beta,
\quad \gamma\alpha_j=\alpha_j\gamma,  \qquad 1\le j\ne l\le 3,\\
&\alpha_1\alpha_2=i\gamma\alpha_3, \quad \alpha_2\alpha_3=i\gamma\alpha_1, \quad \alpha_3\alpha_1=i\gamma\alpha_2.
\end{split}
\ee
Noticing \eqref{TW3d1}, \eqref{dcm1} and \eqref{matricper}, we get
\begin{eqnarray}
\label{3Dcommutator_4}
[W, [\beta, W]] &= & -\frac{1}{\delta^2}\bigg(2\Big(V(\bx)I_4 - \sum_{j = 1}^3A_j(\bx)\alpha_j\Big)\beta
\Big(V(\bx)I_4 - \sum_{j = 1}^3A_j(\bx)\alpha_j\Big)\nn\\
&& - \Big(V(\bx)I_4 - \sum_{j = 1}^3A_j(\bx)\alpha_j\Big)^2\beta
- \beta\Big(V(\bx)I_4 - \sum_{j = 1}^3A_j(\bx)\alpha_j\Big)^2\bigg)\nn\\
&= & -\frac{2}{\delta^2}\beta\Big(V(\bx)I_4 + \sum_{j = 1}^3A_j(\bx)\alpha_j\Big)
\Big(V(\bx)I_4 - \sum_{j = 1}^3A_j(\bx)\alpha_j\Big)\nn\\
&& + \frac{1}{\delta^2}\beta\Big(V(\bx)I_4 + \sum_{j = 1}^3A_j(\bx)\alpha_j\Big)^2 +
\frac{1}{\delta^2}\beta\Big(V(\bx)I_4 - \sum_{j = 1}^3A_j(\bx)\alpha_j\Big)^2\nn\\
&= & \frac{4}{\delta^2}\big(A_1^2(\bx) + A_2^2(\bx) + A_3^2(\bx)\big)\beta.
\end{eqnarray}
\bea\label{3Dcommutator_1}
&&[W, [\alpha_1\partial_1, W]]\nn\\
 &&=  -\frac{1}{\delta^2}\bigg(2\Big(V(\bx)I_4
- \sum_{j = 1}^3A_j(\bx)\alpha_j\Big)(\alpha_1\partial_1)\Big(V(\bx)I_4
- \sum_{j = 1}^3A_j(\bx)\alpha_j\Big)\nn\\
 &&\ \ \ - \Big(V(\bx)I_4 - \sum_{j = 1}^3A_j(\bx)\alpha_j\Big)^2(\alpha_1\partial_1)
 - (\alpha_1\partial_1)\Big(V(\bx)I_4 - \sum_{j = 1}^3A_j(\bx)\alpha_j\Big)^2\bigg)\nn\\
 &&=  -\frac{4}{\delta^2}\alpha_1\big(A_2(\bx)\alpha_2 + A_3(\bx)\alpha_3\big)\big(\partial_1V(\bx)I_4 -
 \partial_1A_1(\bx)\alpha_1 - \partial_1A_2(\bx)\alpha_2 - \partial_1A_3(\bx)\alpha_3\big)\nn\\
 &&\ \  \ + \frac{1}{\delta^2}\alpha_1\bigg(\Big(V(\bx)I_4 - A_1(\bx)\alpha_1 + A_2(\bx)\alpha_2
 + A_3(\bx)\alpha_3\Big)^2
 + \Big(V(\bx)I_4 - \sum_{j = 1}^3A_j(\bx)\alpha_j\Big)^2\nn\\
 &&\ \ \  - 2\Big(V(\bx)I_4 - A_1(\bx)\alpha_1 +
 A_2(\bx)\alpha_2 + A_3(\bx)\alpha_3\Big)\Big(V(\bx)I_4 - \sum_{j = 1}^3A_j(\bx)\alpha_j\Big)\bigg)\partial_1,\nn\\
 &&=  \frac{4}{\delta^2}\big(A_2(\bx)\alpha_2 + A_3(\bx)\alpha_3\big)\alpha_1\big(\partial_1V(\bx)I_4 -
 \partial_1A_1(\bx)\alpha_1 - \partial_1A_2(\bx)\alpha_2 - \partial_1A_3(\bx)\alpha_3\big)\nn\\
 &&\ \ \ + \frac{4}{\delta^2}\Big(\big(A_2^2(\bx) + A_3^2(\bx)\big)\alpha_1 - A_1(\bx)A_2(\bx)\alpha_2 -
 A_1(\bx)A_3(\bx)\alpha_3\Big)\partial_1\nn\\
 &&= \frac{4}{\delta^2}\Big(\big(A_2(\bx)\partial_1A_2(\bx) + A_3(\bx)\partial_1A_3(\bx)\big)\alpha_1
 - A_2(\bx)\partial_1A_1(\bx)\alpha_2 -  A_3(\bx)\partial_1A_1(\bx)\alpha_3\nn\\
 &&\ \ \ + \big(iA_2(\bx)\partial_1A_3(\bx) - iA_3(\bx)\partial_1A_2(\bx)\big)\gamma
 + iA_3(\bx)\partial_1V(\bx)\gamma\alpha_2 - iA_2(\bx)\partial_1V(\bx)\gamma\alpha_3\Big)\nn\\
 &&\ \ \ + \frac{4}{\delta^2}\Big(\big(A_2^2(\bx) + A_3^2(\bx)\big)\alpha_1 - A_1(\bx)A_2(\bx)\alpha_2 -
 A_1(\bx)A_3(\bx)\alpha_3\Big)\partial_1.
\end{eqnarray}
\begin{eqnarray}
\label{3Dcommutator_2}
&&[W, [\alpha_2\partial_2, W]]\nn\\
&&= \frac{4}{\delta^2}\Big(-A_1(\bx)\partial_2A_2(\bx)\alpha_1
+ \big(A_1(\bx)\partial_2A_1(\bx) + A_3(\bx)\partial_2A_3(\bx)\big)\alpha_2 - A_3(\bx)\partial_2A_2(\bx)\alpha_3\nn\\
&&\ \ \  + \big(iA_3(\bx)\partial_2A_1(\bx) - iA_1(\bx)\partial_2A_3(\bx)\big)\gamma
- iA_3(\bx)\partial_2V(\bx)\gamma\alpha_1 + iA_1(\bx)\partial_2V(\bx)\gamma\alpha_3\Big)\nn\\
&&\ \ \  + \frac{4}{\delta^2}\Big(\big(A_1^2(\bx) + A_3^2(\bx)\big)\alpha_2 - A_2(\bx)A_1(\bx)\alpha_1 -
A_2(\bx)A_3(\bx)\alpha_3\Big)\partial_2.
\end{eqnarray}
\begin{eqnarray}
\label{3Dcommutator_3}
&&[W, [\alpha_3\partial_3, W]]\nn\\
 &&=  \frac{4}{\delta^2}\Big(- A_1(\bx)\partial_3A_3(\bx)\alpha_1
- A_2(\bx)\partial_3A_3(\bx)\alpha_2 + \big(A_1(\bx)\partial_3A_1(\bx) + A_2(\bx)\partial_3A_2(\bx)\big)\alpha_3\nn\\
& &\ \ \ + \big(iA_1(\bx)\partial_3A_2(\bx) - iA_2(\bx)\partial_3A_1(\bx)\big)\gamma +
iA_2(\bx)\partial_3V(\bx)\gamma\alpha_1 - iA_1(\bx)\partial_3V(\bx)\gamma\alpha_2\Big)\nn\\
&&\ \ \ + \frac{4}{\delta^2}\Big(\big(A_1^2(\bx) + A_2^2(\bx)\big)\alpha_3 - A_3(\bx)A_1(\bx)\alpha_1 -
A_3(\bx)A_2(\bx)\alpha_2\Big)\partial_3.
\end{eqnarray}
Plugging (\ref{3Dcommutator_1}), (\ref{3Dcommutator_2}), (\ref{3Dcommutator_3})
and (\ref{3Dcommutator_4}) into (\ref{split2}), after some computation,
we obtain \eqref{commutator_3D}.
 \end{proof}

\end{document}